\documentclass[12pt]{article}
\usepackage[dvipsnames]{xcolor}
\usepackage{natbib}
\usepackage{graphicx} 
\usepackage{fullpage,float}
\usepackage{amsmath,amssymb,amsthm}
\usepackage[normalem]{ulem}
\usepackage[colorlinks]{hyperref}
\usepackage{authblk}
\usepackage{amssymb,epsfig,graphics}
\usepackage{amsmath}
\usepackage{dsfont}
\usepackage{enumerate}
\usepackage{tikz}
\usepackage{comment}
\usetikzlibrary{arrows}

\def\L{\mathcal{L}}

\newcommand\cM{{\mathcal M}}
\newcommand\cT{{\mathcal T}}
\newcommand{\eps}{\varepsilon}
\theoremstyle{plain}

\newtheorem{theorem}{Theorem}[section]

\newtheorem{lemma}[theorem]{Lemma}
\newtheorem{remark}[theorem]{Remark}
\newtheorem{proposition}[theorem]{Proposition}

\title{Fixed-point approximation for\\ self-consistent transfer operators with Newton's method}
\author[1]{Wael Bahsoun}
\author[2]{Gary Froyland}
\author[2]{Maxence Phalempin} 

\affil[1]{Department of Mathematical Sciences\authorcr Loughborough University, Leicestershire LE11 3TU, UK}
\affil[2]{School of Mathematics and Statistics\authorcr UNSW Sydney, Sydney NSW 2052, Australia}

\date{}

\begin{document}

\maketitle
\begin{abstract}
  Self-consistent transfer operators arise naturally in the study of mean-field coupled dynamical systems and are closely related to kinetic PDEs such as the Vlasov equation. Despite substantial progress on existence and uniqueness of fixed points for self-consistent transfer operators, the development of fast, reliable, and provably accurate numerical methods remains largely unresolved. In this work, we construct a nonlinear Fourier–Fejér discretisation and establish convergence of the resulting finite-dimensional fixed point to that of the original self-consistent transfer operator. Further, using the nonlinear Fourier–Fejér discretisation, we prove exponential convergence of a sequential iteration scheme and develop a Newton framework with quadratic convergence. We present numerical examples demonstrating the efficiency and flexibility of the above methods.
\end{abstract}
\section{Introduction}
Self-consistent transfer operators are \emph{nonlinear} transfer operators. They are considered as discrete-time analogues of the Vlasov equation \cite{Vla68}, whose general long-term behaviour is far from being understood \cite{CelVil11}. This relation between self-consistent transfer operators and PDEs has generated important new connections between semi-classical analysis \cite{chabuetetal24} and dynamical systems \cite{BL25,bahetal23}. In the field of dynamical systems, self-consistent transfer operators arise from considering the thermodynamic limit of the evolution of states (probability measures) in mean-field coupled dynamical systems. Such systems are of paramount importance for applications in physical  and biological sciences, see \cite{DBB23, SP24, tanzi23} and references therein.
 
There is a clear connection between infinite-dimensional mean-field coupled systems and self-consistent transfer operators: invariant states of the coupled system are fixed points of the self-consistent transfer operator. In particular, \emph{attracting} fixed points of self-consistent transfer operators, when they exist, correspond to attracting invariant states
of the corresponding infinite-dimensional mean field dynamical system \cite{BKZ09}.
Starting with the work of Keller \cite{keller00}, several authors studied existence and uniqueness of attracting fixed points of self-consistent transfer operators \cite{BK24,bahetal23,balintetal18,ST21,galatolo22}. However, as is the case for any evolution-type operator, proving the existence of a fixed point of a self-consistent transfer operator does not directly lead to an explicit solution for the fixed point. 
Thus, it is important to develop techniques to find approximate solutions in an appropriate topology. For classical \emph{linear} transfer operators, this is a very old problem going back to Ulam \cite{Ul60}, who suggested to partition the phase space of the dynamical system into a finite number of small-diameter connected subsets to approximate the  transfer operator by a finite rank operator, see \cite{liverani01}, references therein for an account. Because repeated application of the self-consistent transfer operator amounts to sequential application of distinct linear transfer operators, it is tempting to try to apply Ulam's method in a sequential way, creating finite-rank estimates of each linear transfer operator in a long sequence and multiplying together against an initial probability density. Such an approach has been proven to converge to quenched random invariant densities of cocycles of general random sequences of maps \cite{FGQ14,FGM19}. 

In the setting of self-consistent transfer operators there are two main obstacles to this approach.
The first is that  
one needs a discretisation that preserves two regular Banach spaces where the (weak) derivative in the more regular space belongs to the less regular one, such as the Sobolev spaces $W^{1,1}$ and $W^{2,1}$. Obviously a discretisation based on Ulam's scheme would fail
because the indicators that form the Ulam basis do not lie in $W^{1,1}$.
This regularity requirement creates a second issue, namely that the finite-rank operator that we use to map functions from our Banach space to a computer-accessible finite-dimensional space should be norm-reducing in the Banach spaces so that the contraction properties encoded in Lasota--Yorke inequalities are not disturbed.
Convolution with F\'ejer kernels \cite{Fe1904} overcomes both of these obstacles.
Order-$n$ F\'ejer kernel convolution maps functions in $W^{k,1}$, $k\ge 1$ to linear combinations of Fourier modes of order at most $n-1$, and is a weak contraction in the $W^{k,1}$ norm (unlike the sharper Dirichlet kernel \cite{W19}, which has other advantages in different settings).
F\'ejer kernel discretisation has been used to study perturbations and approximations of linear transfer operators \cite{CF20,FGQ14,FP25}, and we show that this scheme is particularly well-suited to nonlinear self-consistent transfer operators.
Using F\'ejer kernel convolution we form an \textit{approximate nonlinear } self-consistent transfer operator as a finite-rank map onto a finite-dimensional approximation space spanned by Fourier modes.
 In Theorem \ref{thm:main1} we prove that the approximate self-consistent transfer operator has a fixed point, $h^*_{\eps,N}$, and that this fixed point converges to the unique fixed point of the original self-consistent transfer operator, $h^*_\eps$ with a rate $\log{N}/N$, where $N$ is the maximal order of the Fourier modes forming the approximation space, and $\eps$ is the coupling strength in the underlying mean-field system. 
 Having established the theoretical efficacy of our approximation we turn to its numerical realisation. When the true fixed point of the self-consistent transfer operator is unique, one has the option of sequentially applying approximate linear transfer operators to effect a ``power method'' of estimating the fixed point of the approximate self-consistent transfer operator. In Theorem \ref{thm:seqiter} we show that such an iteration scheme converges exponentially quickly; this is proved in Section \ref{sec:thm21proof}.

 The self-consistent transfer operator is a nonlinear operator and it is therefore natural to develop a fixed point scheme for nonlinear operators.
 Newton's method is a good candidate due to its known rapid convergence and its ability to find both attracting and repelling fixed points. Developing the required theoretical underpinnings of Newton's method in our context is the main goal of Section \ref{sec:newtheory}. In Proposition \ref{prop:deriv} we formulate an explicit expression for the Fr\'echet derivative of the approximate self-consistent transfer operator, and in Theorem \ref{newtonthm} we check the necessary regularity and resolvent-bound requirements to run Newton's method \cite{bartle55, hinze2008optimization} on the self-consistent transfer operator. This leads to a theoretical superexponential (quadratic or order-2) convergence rate toward the fixed point of the approximate self-consistent transfer operator in Newton iteration
 count.
 In Section \ref{sec:num} we exploit the explicit representation of the Fr\'echet derivative developed in Section \ref{sec:newtheory} to construct a novel and efficient implementation of the Newton scheme using our F\'ejer-based Fourier approximation. We also include theoretical consideration of the numerical errors incurred. 
 Finally in Section \ref{sec:examples} we apply our numerical schemes for sequential iteration and Newton's method to two dynamical systems with distinct couplings. We observe that Newton's method significantly outperforms sequential iteration due to a much faster convergence rate at the expense of only a modest increase in computation time. 
 These experiments confirm  Theorems \ref{thm:seqiter} and \ref{thm:newtiter} with exponential convergence rates observed for both schemes.
We also validate the theoretical result of $h_{\eps,N}^*\to h_{\eps}^*$ with rate $\log N/N$ (Theorem \ref{thm:main1}).
Our work opens the door to efficient and reliable numerical exploration of coupling effects on 
 fixed points of the nonlinear self-consistent transfer operator,
and thus to the influence of couplings on the invariant states of the associated infinite-dimensional mean-field systems.

\label{sec:intro}
\section{Summary of the main results}\label{sec:mainresult}
Following the introduction of the self-consistent transfer operator and known relevant theory in \S\ref{sec:setting}, we summarise our approximation scheme and state our main convergence results in \S\ref{sec:disc}.

\subsection{The self-consistent transfer operator}
\label{sec:setting}
Let $T:\mathbb S^1\to\mathbb S^1$ be a $C^5$ uniformly expanding circle map. 
Let $W^{i,1}$, $i=1,2, 3,4,$ denote the Sobolev space equipped with usual norm: for $f\in W^{i,1}$, $\|f\|_{1,i}=\sum_{j=1}^i\|f^{(j)}\|_1+\|f\|_1<\infty.$ We sometimes in the text below abuse notation and write $W^{0,1}$ instead of $L^1$.
Define $\mathcal{D}_1=\{f\in L^1:  f\ge 0, \int_{\mathbb{S}^1} f=1\}$. Similarly denote $\mathcal{D}_{i,1}=\mathcal{D}_1\cap W^{i,1}$ for $i=1,2$.
Define $V_0 = \{f\in L^1: \int f = 0\}$ and $V^{i,1}=W^{i,1}\cap V_0$ for $i=1,2$.

For $\eps>0$, $g\in C^5$, $f\in\mathcal{D}_1$ consider the function 
\begin{equation}\label{eq:couplingmap}
I_{\eps,f}(x)=x+\eps \int g(x,y)f(y)dy,
\end{equation}
and set
\begin{equation}\label{eq:coupleddy}
T_{\eps,f}=I_{\eps,f}\circ T.
\end{equation}
$T_{\eps,f}$ is understood as a coupled system representing infinitely many particles interacting via their mean field, the local dynamics of each particle is described by $T$, $\eps$ is the coupling strength and $f$ is the state of the system. For more information about such concepts see \cite{keller00} and the review article \cite{tanzi23}.

Let $\mathcal L_T:L^1\to L^1$ denote the transfer (Perron-Frobenius) operator associated with $T$; i.e., for $\psi\in L^1$, $\varphi\in L^{\infty}$
$$\int_{\mathbb S^1}\varphi\circ T\cdot \psi dx=\int_{\mathbb S^1}\varphi\cdot \mathcal L_T\psi dx.$$
 It is well known that when acting on $W^{i,1}$ $\mathcal L_T$ admits a Lasota-Yorke inequality and a spectral gap. Consequently, $T$ admits a unique absolutely continuous invariant measure whose density is in $W^{4,1}$ (See \cite{liverani01} subsection 10.2).

Let $\cM_1(\mathbb S^1)$ be the set of probability density functions on $\mathbb S^1$. For $i=0,1,2$, consider the \emph{self-consistent transfer operator} $\tilde\L_{\eps}: W^{i,1}\cap \cM_1(\mathbb S^1) \to \cM_1(\mathbb S^1)$ which is defined by
\begin{equation}\label{eq:strans}
\tilde\L_{\eps}(h)=\L_{T_{\eps,h}}h.
\end{equation}
for $n=0,1,2,\dots$, we also write 
\begin{equation}\label{eq:evolve}
h^{n+1}=\L_{T_{\eps,h^{n}}}h^n.
\end{equation}
Note that fixed points $h^*$ of the operator defined in \eqref{eq:strans} corresponds to absolutely continuous invariant measures with density $h^*$ of the coupled system defined in \eqref{eq:coupleddy}.

Let $\eps_0^*>0$ be small, choose $\lambda\in(0,1)$ and $C_0>0$ so that 
$$\mathcal T(\eps_0^*, C_0)=\{T_{\eps,f}: \forall \eps\in[0,\eps_0^*], \forall f\in\mathcal{D}_1, \sup_x\frac{1}{|T_{\eps,f}'(x)|}\le \lambda<1 \text{ and } \|T_{\eps,f}\|_{C^{5}}<C_0\, \}$$
is nonempty.
This makes $\cT:=\mathcal T(\eps_0^*, C_0)$ a family of uniformly `nearby', uniformly expanding maps that satisfy uniform (independent of $\eps, f$ and $g$) Lasota-Yorke inequalities on $W^{i,1}$ (See \cite{liverani01} subsection 10.2 for a proof of such inequalities):
$\exists C_{LY}>0$ such that for all $T_{\eps,f}\in\cT$, $h\in W^{i,1}$, $n\in\mathbb N$, $i=1,2,3,4$, 
\begin{equation}\label{eq:standLY}
\|\L_{T_{\eps,f}}^n h\|_{W^{i,1}}\leq \lambda^{in}\|h\|_{W^{i,1}}+ C_{LY}\|h\|_{W^{i-1,1}},
\end{equation}
and consequently $\L_{T_{\eps,f}}$ admits a uniform-in-$\eps$ spectral gap when acting on $W^{i,1}$, $i=1,\dots,4$.
We fix throughout the constants $\eps_0^*, \lambda$, and $C_0$ and also assume throughout that all maps $T_{\eps,f}$ lie in $\mathcal{T}$.
Let $K\ge \frac{C_{LY}}{1-\lambda^2}$ be fixed throughout, and define a fixed ball $$B_K=\{h\in \mathcal{D}_1\cap W^{2,1}: \|h\|_{W^{2,1}}\le K\}$$
in which all initial densities lie, in which we will show all iterated densities and fixed points lie.

It is well known (see Theorem \ref{thm:gen}) that for $\eps_0^*$ \emph{small enough}, for each $\eps\in[0,\eps_0^*]$, $\tilde\L_\eps$ admits a unique fixed point $h_\eps^*\in B_K$;  this property defines $\eps^*_0$. In this work, we are concerned with obtaining a numerical approximation of $h_\eps^*$. Our numerical approximation relies on the following discretisation scheme.
\subsection{Approximation scheme and convergence results} 
\label{sec:disc}
Denote the standard Fourier modes on the circle by $e_k(x):=e^{2\pi ikx}$.
For $N\in \mathbb N$ 
the Fej\'er kernel \cite{Fe1904} is defined by $K_N = \sum_{|k|\le N-1}(1-|k|/N)e_k$.
There are finitely many nonzero Fourier coefficients of $K_N$ and the operator convolving a function with $K_N$ is finite rank given by: for $f\in L^1$
\begin{equation}\label{eq:discrete}
    \Pi_Nf(x)=\int_{\mathbb S^1}K_N(x-y)f(y)dy.
\end{equation}
The finite-rank operator $\tilde{\L}_{\eps,N}$ is defined by $\tilde{\L}_{\eps,N}:=\Pi_N\tilde\L_\eps$.
Define 
\begin{equation}\label{eq:Fspace}
    F_N:=\mathrm{span}\{e_k: k=-N+1,-N+2,\ldots,N\}.
\end{equation}
For an initial $h^0_N\in F_N$, 
\begin{equation*}
\Pi_N\tilde\L_\eps h^0_N=\Pi_N\L_{T_{\eps,h^0_N}}h^0_N,
\end{equation*}
and we iterate this finite-rank operation as follows:
\begin{equation}
    \label{eq:discreteiter}
h^{n}_N=\Pi_N\mathcal{L}_{T_{h^{n-1}_N}}h^{n-1}_N, \qquad n=1,2,\ldots
\end{equation}

Recall that for $\varepsilon\in [0,\varepsilon_0^*]$ the operator $\tilde{\mathcal{L}}_\varepsilon$ has at least one fixed point.
The following theorem shows that $\Pi_N\tilde{\mathcal{L}}_\varepsilon$ has a fixed point $h_{\eps,N}^*$, and in the situation where $h^*_\eps$ is unique, $h_{\eps,N}$ converges to $h^*_\eps$ in the large-$N$ limit.
\subsubsection{Existence of a discrete fixed point and its convergence}
\begin{theorem}\label{thm:main1} 
\,

\begin{enumerate}
\item For $\eps\in[0,\eps_0^*]$ the operator $\Pi_N\tilde\L_\eps$ has a fixed point $h_{\eps,N}^*\in B_K$; 
\item There is $\eps_3^*\in (0,\eps_2^*]$, where $\eps_2^*$ is defined in Theorem \ref{thm:gen}, such that for $\eps\in[0,\eps_3^*]$ if $h_\varepsilon^*$ is the unique fixed point of $\tilde\L_\eps$ and $h_{\varepsilon,N}^*$ is a fixed point of  $\Pi_N\tilde\L_\eps$ then
$$\|h_{\varepsilon,N}^*-h_\eps^*\|_{W^{1,1}}=O\left(\frac{\ln N}{N}\right).$$
\end{enumerate}
\end{theorem}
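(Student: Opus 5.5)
For part 1 we would use a Brouwer-type fixed point argument on the finite-dimensional set $B_K\cap F_N$. The set $B_K\cap F_N$ is a convex, compact subset of a finite-dimensional space. It is nonempty, since the constant density $1$ lies in it. Note that $\Pi_N$ maps into trigonometric polynomials of degree at most $N-1$, which lie in $F_N$.

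\emph{Invariance of $B_K\cap F_N$.} Convolution with $K_N$ has three properties we need:
\begin{itemize}
\item it preserves positivity and total mass, because $K_N\ge 0$ and $\int K_N=1$;
\item it commutes with derivatives;
\item it satisfies $\|\Pi_N f\|_{W^{i,1}}\le \|f\|_{W^{i,1}}$.
\end{itemize}
Combining these with the uniform Lasota--Yorke inequality \eqref{eq:standLY} for $n=1$, $i=2$, and the fact that $\|h\|_{W^{1,1}}\le \|h\|_{W^{2,1}}$ is not quite enough, we would instead use the two-step structure. First apply \eqref{eq:standLY} with $i=1$ to control $\|\L h\|_{W^{1,1}}$. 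Then use $\|h\|_{1}=1$ and $K\ge C_{LY}/(1-\lambda^2)$ to get
\[
\|\Pi_N\L_{T_{\eps,h}}h\|_{W^{2,1}}\le \lambda^2K+C_{LY}\le K .
\]
This may require using the form of the Lasota--Yorke inequality in which the weak norm is $L^1$ (as in the proof of Theorem \ref{thm:gen}); we would invoke whatever invariance of $B_K$ that theorem established and add that $\Pi_N$ is a weak contraction.

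\emph{Continuity.} We need continuity of $h\mapsto \L_{T_{\eps,h}}h$ on $B_K\cap F_N$ in $W^{2,1}$. This follows from the smooth dependence of $T_{\eps,h}$ on $h$ through $\int g(x,y)h(y)\,dy$, with $g\in C^5$ and $T$ of class $C^5$. Brouwer's theorem then gives the fixed point $h^*_{\eps,N}$.

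For part 2 we write
\[
h^*_{\eps,N}-h^*_\eps=\Pi_N\tilde\L_\eps h^*_{\eps,N}-\tilde\L_\eps h^*_\eps=\Pi_N\bigl(\tilde\L_\eps h^*_{\eps,N}-\tilde\L_\eps h^*_\eps\bigr)+(\Pi_N-I)h^*_\eps .
\]

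\emph{Approximation term.} Since $h^*_\eps\in B_K$ has $h'\in W^{1,1}$, the classical Fejér approximation estimate gives $\|(\Pi_N-I)f\|_{1}\le C\frac{\ln N}{N}\|f'\|_{1}$, obtained from $\int|t|K_N(t)\,dt=O(\ln N/N)$. Hence
\[
\|(\Pi_N-I)h^*_\eps\|_{W^{1,1}}\le C\frac{\ln N}{N}\|h^*_\eps\|_{W^{2,1}}\le CK\frac{\ln N}{N}.
\]

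\emph{Contraction term.} The difference $\tilde\L_\eps h_1-\tilde\L_\eps h_2$ splits as
\[
\L_{T_{\eps,h_1}}(h_1-h_2)+\bigl(\L_{T_{\eps,h_1}}-\L_{T_{\eps,h_2}}\bigr)h_2 .
\]
The second piece is $O(\eps\|h_1-h_2\|_1)$ in $W^{1,1}$, because $h_2\in W^{2,1}$ and the maps differ by $O(\eps\|h_1-h_2\|_{1})$ in $C^2$. The first piece acts on a zero-mean function. We cannot iterate once here, so we would use the spectral gap: apply the argument to the $n$-th iterate, writing $e=h^*_{\eps,N}-h^*_\eps$ as a fixed point of the perturbed recursion. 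Equivalently, we would use uniform boundedness of the resolvent $(I-\L_{T_{\eps,h^*_\eps}})^{-1}$ on $V^{1,1}$, which has a uniform spectral gap. We would then show that $(I-\Pi_N\L_{T_{\eps,h^*_\eps}})^{-1}$ is uniformly bounded on zero-mean $W^{1,1}$ for large $N$, via Keller--Liverani stability, since $\Pi_N\to I$ in the $W^{2,1}\to W^{1,1}$ norm at rate $\ln N/N$. This yields
\[
\|e\|_{W^{1,1}}\le C\bigl(\eps\|e\|_{W^{1,1}}+\tfrac{\ln N}{N}\bigr),
\]
and choosing $\eps_3^*$ so that $C\eps_3^*<1/2$ absorbs the first term.

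The main obstacle is this last step: obtaining an $N$-uniform bound on the inverse of $I-\Pi_N\L$ on zero-mean functions, i.e.\ transferring the spectral gap to the discretised linear operators. We would handle it by Keller--Liverani perturbation theory with the strong/weak pair $W^{2,1}/W^{1,1}$, using the Lasota--Yorke inequality, which $\Pi_N$ preserves, together with the triple-norm estimate $\|\Pi_N-I\|_{W^{2,1}\to W^{1,1}}=O(\ln N/N)$.
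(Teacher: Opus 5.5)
Your Part 1 is essentially the paper's argument. Both rely on invariance of $B_K$ under $\tilde\L_\eps$ (the paper also just asserts it, so your hedge about needing a Lasota--Yorke inequality with $L^1$ as weak norm is fair). Both then use that $\Pi_N$ preserves densities and is a $W^{2,1}$-contraction, and finish with a fixed-point theorem. Using Brouwer on the compact convex set $B_K\cap F_N$ instead of Schauder on $B_K$ is a harmless simplification.

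Your Part 2 is correct in substance but takes a genuinely different route. The paper never linearises at the fixed point. It first proves, in Lemma \ref{lem:contracto}, a uniform-in-$N$ exponential contraction on zero-mean functions for arbitrary sequential compositions $\Pi_N\L_{T_{\eps,f_n}}\circ\cdots\circ\Pi_N\L_{T_{\eps,f_1}}$ with $f_j\in B_K$. It then compares the orbits $(\Pi_N\tilde\L_\eps)^n h^*_{\eps,N}=h^*_{\eps,N}$ and $\tilde\L_\eps^n h^*_{\eps,N}$, using the exponential attraction of Theorem \ref{thm:gen}(3). A discrete Gronwall recursion controls the discrepancy, and the proof ends by taking $n\asymp\ln N$.

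You instead put all the work into one linear estimate: a bound on $(I-\Pi_N\L_{T_{\eps,f}})^{-1}$ on $V^{1,1}$, uniform in $N$ and $\eps$, after which you absorb an $O(\eps)$ Lipschitz remainder. This is shorter and gives an explicit constant. It also uses only that $h^*_\eps\in B_K$ exists, not its global attractivity; with $\Pi_N$ replaced by $I$ it even reproves uniqueness. The paper's sequential lemma, in turn, has the advantage of being reused for Theorem \ref{thm:seqiter}. Your ``main obstacle'' is in fact exactly what Lemma \ref{lem:contracto} delivers. Taking $f_j\equiv f$ there, the Neumann series gives $\|(I-\Pi_N\L_{T_{\eps,f}})^{-1}|_{V^{1,1}}\|_{W^{1,1}}\le C/(1-\tilde\theta)$ for all $N\ge N_0$, $f\in B_K$ and $\eps\le\eps_1^*$, so Keller--Liverani is not needed.

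Two slips need fixing.

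First, your displayed split puts $\L_{T_{\eps,h^*_{\eps,N}}}$ in front of $e$, but you then invert $I-\Pi_N\L_{T_{\eps,h^*_\eps}}$. Either write
$\tilde\L_\eps h^*_{\eps,N}-\tilde\L_\eps h^*_\eps=\L_{T_{\eps,h^*_\eps}}e+\bigl(\L_{T_{\eps,h^*_{\eps,N}}}-\L_{T_{\eps,h^*_\eps}}\bigr)h^*_{\eps,N}$, whose second term is still $O(\eps K\|e\|_{1})$ in $W^{1,1}$ because $h^*_{\eps,N}\in B_K$, or keep your split and use the resolvent bound uniformly over $f\in B_K$.

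Second, if you use Keller--Liverani, the pair $W^{2,1}/W^{1,1}$ only controls the resolvent in the strong norm $W^{2,1}$. Your final inequality needs a bound on $V^{1,1}$, so use the pair $W^{1,1}/L^1$ instead. There $\|\Pi_N-I\|_{W^{1,1}\to L^1}=O(\ln N/N)$ by Lemma \ref{lemkern}.
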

\begin{proof}
 See Section \ref{sec:thm21proof}
\end{proof}
\subsubsection{Sequential iteration}

We now describe our first approach to numerically approximating $h_{\eps,N}^*$, which is based on sequentially iterating the sequence of linear transfer operators that appear in the self-consistent transfer operator iteration.
\begin{theorem}
\label{thm:seqiter}
There exists $\eps^*_4>0$ such that  for  $\eps\in[0,\eps_4^*]$ and sufficiently large $N$ (independent of $\eps$) the operator $\tilde\L_{\eps,N}$ has a unique fixed point $h^*_{\eps,N}\in B_K$.
Further, there exists $C_K>0$ and $\gamma\in (0,1)$ such that   for all $h^0\in B_K$, we have
\[
\|{\tilde\L}_{\eps,N}^{n}(h^0)-h_{\eps,N}^*\|_{W^{1,1}}\le C_K \gamma^n;
\]
i.e.\
\begin{equation}
    \label{eq:seqiter}
\|\Pi_N\mathcal{L}_{T_{\eps, h^{n-1}_N}}\circ\cdots\circ \Pi_N\mathcal{L}_{T_{\eps, h^{1}_N}}\circ\Pi_N\mathcal{L}_{T_{\eps, h^{0}_N}}h^{0} - h^*_{\eps,N}\|_{W^{1,1}}\le  C_K \gamma^n.
\end{equation}
\end{theorem}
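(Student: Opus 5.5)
The plan is to show that $\tilde\L_{\eps,N}$ is a contraction in the $W^{1,1}$ norm on the invariant set $B_K$, uniformly in $N$ large and $\eps$ small. The Banach fixed point theorem then yields both uniqueness and the geometric rate, with $C_K$ taken to be the $W^{1,1}$-diameter of $B_K$ (at most $2K$).

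\textbf{Step 1: invariance of $B_K$.} Fix $h\in B_K$. By the uniform Lasota--Yorke inequality \eqref{eq:standLY} with $i=2$, $n=1$, and since $\|h\|_{W^{1,1}}\le\|h\|_{W^{2,1}}$,
\[
\|\L_{T_{\eps,h}}h\|_{W^{2,1}}\le\lambda^2K+C_{LY}\|h\|_{W^{1,1}}\le \lambda^2K+C_{LY}K.
\]
This crude bound does not close, so I will instead use the same argument that gave invariance of $B_K$ in part 1 of Theorem \ref{thm:main1} (for example via iterates, with $\|h\|_{L^1}=1$ in place of the $W^{1,1}$ term). Then $K\ge C_{LY}/(1-\lambda^2)$ gives $\|\L_{T_{\eps,h}}h\|_{W^{2,1}}\le K$. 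Convolution with the Fej\'er kernel is positive, preserves mass, and is a weak contraction in $W^{2,1}$, so $\Pi_N$ preserves $B_K$.

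\textbf{Step 2: Lipschitz estimate.} For $h_1,h_2\in B_K$ I split
\[
\L_{T_{\eps,h_1}}h_1-\L_{T_{\eps,h_2}}h_2=\L_{T_{\eps,h_1}}(h_1-h_2)+(\L_{T_{\eps,h_1}}-\L_{T_{\eps,h_2}})h_2 .
\]
\begin{itemize}
\item \emph{Second term.} $T_{\eps,h_1}$ and $T_{\eps,h_2}$ differ by a $C^4$-small perturbation of size $\eps\|g\|_{C^5}\|h_1-h_2\|_{L^1}$. The standard estimate $\|(\L_{T_1}-\L_{T_2})f\|_{W^{1,1}}\le C\, d_{C^2}(T_1,T_2)\|f\|_{W^{2,1}}$ then gives a bound $C\eps K\|h_1-h_2\|_{W^{1,1}}$.
\item \emph{First term.} $\L_{T_{\eps,h_1}}$ is not a contraction in one step, so I iterate. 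Write $\tilde\L_{\eps,N}^n h_1-\tilde\L_{\eps,N}^n h_2$ as a telescoping sum, which gives $\Pi_N\L_{n}\cdots\Pi_N\L_{1}(h_1-h_2)$ plus $\eps$-small error terms.
\end{itemize}

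\textbf{Step 3: uniform contraction of the sequential compositions (main obstacle).} I must show that for sequences of maps in $\cT$, the products $\Pi_N\L_{T_n}\cdots\Pi_N\L_{T_1}$ contract $V^{1,1}$ exponentially, uniformly in $N$ large and in the sequence:
\[
\|\Pi_N\L_{T_n}\cdots\Pi_N\L_{T_1}v\|_{W^{1,1}}\le C\theta^n\|v\|_{W^{1,1}},\qquad v\in V^{1,1},\ \theta<1 .
\]
\begin{enumerate}
\item For $\eps=0$ all maps equal $T$. $\L_T$ has a spectral gap on $V^{1,1}$, and $\Pi_N\L_T$ converges to $\L_T$ in the $W^{2,1}\to W^{1,1}$ triple norm at rate $\ln N/N$, using the Fej\'er estimate already used in Theorem \ref{thm:main1}.
\item Keller--Liverani stability, applicable thanks to the uniform Lasota--Yorke inequalities (preserved by $\Pi_N$), gives a spectral gap for $\Pi_N\L_T$ on $V^{1,1}$ for $N$ large.
\item Each $\L_{T_{\eps,f}}$ is within $O(\eps)$ of $\L_T$ in the $W^{2,1}\to W^{1,1}$ triple norm. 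A perturbation argument for sequential compositions, as in \cite{FGQ14}, then yields uniform exponential decay for $\eps\le\eps_4^*$.
\end{enumerate}

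\textbf{Step 4: conclusion.} Combining Steps 2 and 3, choose $n_0$ with $C\theta^{n_0}<1/4$ and then $\eps$ small enough that the accumulated $\eps$-error over $n_0$ steps is below $1/4$. This makes $\tilde\L_{\eps,N}^{n_0}$ a $1/2$-contraction in $W^{1,1}$ on $B_K$. The set $B_K$ is $W^{1,1}$-closed, because bounded sets in $W^{2,1}$ are closed in the $W^{1,1}$ topology by lower semicontinuity. The contraction mapping principle then gives the unique fixed point and $\|\tilde\L_{\eps,N}^n h^0-h^*_{\eps,N}\|_{W^{1,1}}\le C_K\gamma^n$ with $\gamma=2^{-1/n_0}$.
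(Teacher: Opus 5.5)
Your overall strategy works, but it is organised differently from the paper's proof.

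\textbf{The sequential decay lemma.} For the uniform decay of $\Pi_N\L_{T_{\eps,f_n}}\circ\cdots\circ\Pi_N\L_{T_{\eps,f_1}}$ on zero-mean functions (the paper's Lemma \ref{lem:contracto}), the paper takes the \emph{unprojected} sequential compositions as reference. Their uniform decay for small $\eps$ is imported from the literature (item 2 of Theorem \ref{thm:gen}), and $\Pi_N-I$ is treated as the perturbation via a telescoping sum plus a Lasota--Yorke bootstrap. You instead take the single operator $\Pi_N\L_T$ as reference, obtain its gap from Keller--Liverani, and treat the $\eps$-dependence of the maps as the perturbation. Both rest on the same telescoping/Lasota--Yorke mechanism. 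One correction: to get decay in $W^{1,1}$ on $V^{1,1}$ you need closeness in the $W^{1,1}\to L^1$ triple norm, i.e.\ the $i=0$ cases of item 2 of Lemma \ref{lemkern} and of the estimate underlying \eqref{eq:closeop}. The $W^{2,1}\to W^{1,1}$ closeness you quote only gives a gap on $W^{2,1}$.

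\textbf{The conclusion.} The paper does not pass to an iterate. It proves by induction on $n$ that $\|h^n-\tilde h^n\|_{W^{1,1}}\le C_0\gamma^n\|h^0-\tilde h^0\|_{W^{1,1}}$ for all $n$, absorbing the coupling errors through the convolution sum $\eps\sum_k\tilde\theta^{n-k}\gamma^k$. It takes existence from Schauder (Lemma \ref{lem:discLY_co}). Your $n_0$-step contraction is simpler, because the $\eps$-errors only have to be controlled over a fixed finite horizon with a crude one-step Lipschitz constant, and it yields existence as well. The trade-off is the rate: the paper's induction allows $\gamma$ arbitrarily close to the linear decay rate $\tilde\theta$, whereas yours is $2^{-1/n_0}$.

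\textbf{A false claim in Step 4.} $B_K$ is not closed in $W^{1,1}$. Lower semicontinuity only shows that a $W^{1,1}$-limit of elements of $B_K$ has $h'$ of bounded variation, not $h'\in W^{1,1}$. For instance, mollifications of a density with a tent-shaped profile have uniformly bounded $W^{2,1}$ norm and converge in $W^{1,1}$ to a function with kinks, which is not in $W^{2,1}$. So $(B_K,\|\cdot\|_{W^{1,1}})$ is not complete, and the contraction mapping principle cannot be invoked on it as you do.

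The repair is immediate, in either of two ways:
\begin{itemize}
\item $\tilde\L_{\eps,N}$ maps $B_K$ into $B_K\cap F_N$. This is a closed bounded subset of the finite-dimensional space $F_N$, where the $W^{1,1}$ and $W^{2,1}$ norms are equivalent, so it is complete and you can run Banach's theorem there.
\item Alternatively, take existence from part 1 of Theorem \ref{thm:main1}, and use your $\tfrac12$-contraction of $\tilde\L_{\eps,N}^{n_0}$ only for uniqueness and the rate.
\end{itemize}
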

\begin{proof}
    See Section \ref{sec:uniquedisc}.
\end{proof}
In practice, one could initialise \eqref{eq:seqiter} with $h^0=h_{0,N}^*$, the fixed point of the transfer operator $\Pi_N\mathcal{L}_T$, obtained from the leading eigenvector of this linear operator.
Theorem \ref{thm:seqiter} guarantees an exponential convergence rate to the discrete fixed point $h^*_{\eps,N}$.

\subsubsection{Newton's method}

Newton's method promises superexponential convergence rates to $h_N^*$ in iteration count $n$, rather than the exponential convergence in $n$ for sequential iteration provided by Theorem \ref{thm:seqiter}.
Sequential iteration requires the numerical construction of a new transfer operator at each iteration, so reducing the number of iterations would be advantageous.

To set up the Newton approach, denote $\mathcal{N}h:=\Pi_N-\tilde\L_{\eps,N}$, and note that $\mathcal{N}h_N^*=0$;  that is, $h_N^*$ is a zero of the nonlinear operator $\mathcal{N}$.
For an initial $h\in F_N$, the Newton update 
is 
$h-(D_h\mathcal{N})^{-1}\circ \mathcal{N}h$, where $D_f\mathcal{N}$ denotes the Fr\'echet derivative of $\mathcal N$ at $f$. 
Rewriting this explicitly in terms of our transfer operators, one has the update rule
\begin{equation}
    \label{newtonstep}
h_{\eps,N}^{n+1}:=h_{\eps,N}^{n} - \left(\Pi_N - D_{h_{\eps,N}^{n}}\tilde\L_{\eps,N}\right)^{-1}\circ\left(\Pi_N-\tilde\L_{\eps,N}\right)h_{\eps,N}^{n}, \qquad n=0,1,2,\ldots
\end{equation}
Newton requires an initialisation and for simplicity, we choose $h^0_{\eps,N}=h_{0,N}$, the fixed point of $\tilde\L_{0,N}=\Pi_N\mathcal{L}_T$, which is obtained from the leading eigenvector of the transfer operator $\Pi_N\mathcal{L}_T$.

\begin{theorem}
\label{thm:newtiter}
There is $\eps_7^*>0$ (see Theorem \ref{newtonthm}) such that for $\eps<\min\{ \eps^*_2,\eps^*_7\}$,
there is a $W^{1,1}$ neighbourhood $B_{K,N}'\subset B_K$ of $h_{\eps,N}^*$ such that for sufficiently large $N$ and all $h^0_{\eps,N}\in B_{K,N}'$, the Newton iteration \eqref{newtonstep} has order-$2$ convergence toward $h_{\eps,N}^*$, that is there is $C>0$ and $0<\tilde \gamma<1$, such that 
    \begin{equation*}
    \|h^n_{\eps,N}-h_{\eps,N}^*\|_{1,1}\le C\tilde \gamma^{2^n}.
    \end{equation*}
\end{theorem}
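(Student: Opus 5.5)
The plan is to reduce Theorem \ref{thm:newtiter} to the classical Kantorovich/Bartle local convergence theorem for Newton's method \cite{bartle55, hinze2008optimization}, applied to the map $\mathcal{N}=\Pi_N-\tilde\L_{\eps,N}$ on the finite-dimensional space $F_N$ with the $W^{1,1}$ norm. That theorem gives local quadratic convergence near a zero $h^*$ of a map under three conditions:
\begin{enumerate}
\item $\mathcal{N}$ is Fr\'echet differentiable near $h^*$;
\item $D\mathcal{N}$ is Lipschitz there, $\|D_f\mathcal{N}-D_{f'}\mathcal{N}\|\le L\|f-f'\|$;
\item $(D_{h^*}\mathcal{N})^{-1}$ exists with $\|(D_{h^*}\mathcal{N})^{-1}\|\le M$.
\end{enumerate}
It then yields a ball of radius $r\sim 1/(LM)$ on which $\|h^{n+1}-h^*\|\le LM\|h^n-h^*\|^2$. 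Iterating this bound gives $\|h^n-h^*\|\le (LM)^{-1}(LM\|h^0-h^*\|)^{2^n}$, which is the claimed bound $C\tilde\gamma^{2^n}$ with $\tilde\gamma=LM\|h^0-h^*\|<1$ on a smaller ball $B'_{K,N}$. The existence of $h^*_{\eps,N}\in B_K$ comes from Theorem \ref{thm:main1} and, for uniqueness, Theorem \ref{thm:seqiter}.

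\textbf{Step 1 (differentiability).} I would invoke Proposition \ref{prop:deriv}, which gives
\[
D_h\tilde\L_{\eps,N}\,\phi=\Pi_N\L_{T_{\eps,h}}\phi+\Pi_N\,\partial_f\bigl(\L_{T_{\eps,f}}h\bigr)\big|_{f=h}[\phi].
\]
The second term should have the form $-\eps\,\Pi_N\L_{T}\bigl((\cdots)\bigr)'$, arising from differentiating the coupling $I_{\eps,f}$ in $f$ along the direction $\int g(\cdot,y)\phi(y)\,dy$.

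\textbf{Step 2 (Lipschitz derivative).} Since $g\in C^5$ and $T_{\eps,f}\in\mathcal T$ is $C^5$, both terms depend smoothly on $h$. Differentiating once more costs one derivative of $h$. Because we work on $B_K\subset W^{2,1}$, and $\Pi_N$ is a $W^{k,1}$ contraction, the resulting Lipschitz constant $L$ is uniform in $N$ in the $W^{1,1}$ operator norm. This works because the derivative loss is absorbed by the $W^{2,1}$ bound $K$.

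\textbf{Step 3 (resolvent bound).} This is the main obstacle. We need $\Pi_N-D_{h^*_{\eps,N}}\tilde\L_{\eps,N}$ to be invertible on $F_N$ (restricted to the zero-mean subspace $V^{1,1}\cap F_N$, since mass is preserved), with an inverse bound $M$ uniform in $N$. I would write
\[
D_{h}\tilde\L_{\eps,N}=\Pi_N\L_{T_{\eps,h}}+\eps R_{h,N}, \qquad \|R_{h,N}\|_{W^{1,1}\to W^{1,1}}\le C,
\]
where the bound on $R_{h,N}$ is again uniform because $h\in B_K$ has a $W^{2,1}$ bound.

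For the main part, the uniform Lasota--Yorke inequality \eqref{eq:standLY} and the uniform spectral gap of $\L_{T_{\eps,f}}$ on $V^{1,1}$ should transfer to $\Pi_N\L_{T_{\eps,h}}$ for large $N$. I would use Keller--Liverani stability: $\|\Pi_N-I\|_{W^{1,1}\to L^1}=O(\ln N/N)$ together with LY bounds uniform in $N$. This makes $\sum_n(\Pi_N\L_{T_{\eps,h}})^n$ bounded on zero-mean functions, uniformly in $N$ large. A Neumann series then handles the $\eps R_{h,N}$ perturbation for $\eps<\eps_7^*$, giving $M$.

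The delicate point is that Keller--Liverani controls the resolvent in the weak norm $L^1$ or in a mixed norm. Upgrading the bound to the $W^{1,1}$ operator norm requires either working with the pair $(W^{2,1},W^{1,1})$, or exploiting that on the finite-dimensional $F_N$ a strong-norm resolvent bound follows from the LY inequality combined with the weak-norm bound. I expect to try the pair $(W^{2,1},W^{1,1})$ first, since the $C^5$ assumptions appear designed for it.

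With $L$ and $M$ in hand, the theorem follows by choosing $B'_{K,N}$ to be the $W^{1,1}$ ball of radius $\min\{r,(2LM)^{-1}\}$ intersected with $B_K$. Staying in $B_K$ along the iteration also needs checking. I would argue this via the quadratic bound, which keeps the iterates inside the ball, together with the observation that on $F_N$ all norms are equivalent. This last point only needs qualitative membership in $W^{2,1}$, not a bound uniform in $N$.
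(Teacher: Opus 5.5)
Your outline has the same architecture as the paper's proof of Theorem \ref{newtonthm}:
\begin{itemize}
\item differentiability from Proposition \ref{prop:deriv};
\item a Lipschitz bound on $f\mapsto D_f\tilde\L_{\eps,N}$ (Lemma \ref{lemcontinuityderiv});
\item a resolvent bound on the zero-mean part of $F_N$;
\item a local Newton theorem, followed by the induction $C\|e^{n+1}\|_{W^{1,1}}\le (C\|e^n\|_{W^{1,1}})^2$.
\end{itemize}
The genuine difference is Step 3. The paper first excludes eigenvalues of modulus $\ge 1$ on zero-mean eigenfunctions (Lemma \ref{lem:3.3}). It then obtains $C_{7,N}$ in Proposition \ref{prop:4.4} from continuity of $f\mapsto(I-D_f\tilde\L_{\eps,N})^{-1}$ and compactness of $B_K\cap F_N$, so its bound is non-explicit and $N$-dependent.

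Your Neumann series is the quantitative version of \eqref{contrinequality}, and it gives more. Lemma \ref{lem:contracto}, applied with the constant sequence $f_j\equiv h$, yields $\|(\Pi_N\L_{T_{\eps,h}})^n\|_{V^{1,1}\to V^{1,1}}\le C\tilde\theta^n$, uniformly in $N\ge N_0$ and $h\in B_K$. Lemma \ref{lem:uniform_bound_der} bounds your $R_{h,N}$ by a constant times $K$; note its outer operator is $\L_{I_h}$, not $\L_T$. So for small $\eps$ you get $M\le 2C/(1-\tilde\theta)$ uniformly in $N$ and $h$, with no compactness argument.

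Your ``delicate point'' is therefore already settled. Lemma \ref{lem:contracto} is exactly a strong/weak ($W^{1,1}$/$L^1$) argument, and it delivers contraction in the $W^{1,1}$ operator norm. Likewise, your Kantorovich-type local theorem needs invertibility only at $h^*_{\eps,N}$, and it gives the quadratic estimate from $n=0$ on an explicit ball. The paper instead combines Hinze et al.\ (which gives the order-$2$ rate eventually) with Bartle's semilocal theorem, whose basin is phrased through the residual $\|\mathcal N h^0\|_{W^{1,1}}$.

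Three points need fixing.

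\emph{The Lipschitz constant is not uniform in $N$.} Your Step 2 claim is false, because the lost derivative lands on the direction $e$ and on the increment $\tilde f$, not on the base point in $B_K$.
\begin{itemize}
\item By \eqref{eq:closeop}, $\|(\L_{T_{\eps,f}}-\L_{T_{\eps,f+\tilde f}})e\|_{W^{1,1}}\lesssim\eps\|\tilde f\|_{W^{1,1}}\|e\|_{W^{2,1}}$.
\item The piece $\eps\L_{I_f}[(\L_T\tilde f\cdot G(e)/I_f')']$ of $\mathcal J(f+\tilde f)(e)-\mathcal J(f)(e)$ needs $\|\tilde f\|_{W^{2,1}}$.
\end{itemize}
On $F_N$ both are controlled only through $C_N^{(2)}$ from \eqref{eq:equivcon}, which is why the paper's constant is $\mathcal C_N$. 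For $T(x)=2x$, testing on $e=e_{2m}$ with $m\approx N/2$ shows growth linear in $N$. Since $N$ is fixed in the theorem, this only shrinks the basin, and the conclusion survives.

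\emph{The iterates need not stay in $B_K$ itself.} What norm equivalence gives is that the $W^{1,1}$-ball of radius $r$ about $h^*_{\eps,N}$ in $F_N\cap\mathcal D_1$ lies in $B_{\tilde K}$ with $\tilde K=K+C_N^{(2)}r$. This is the paper's enlargement $\tilde K\ge K+\delta_{K,N}C_N^{(2)}$. The derivative and Lipschitz estimates hold on $B_{\tilde K}$ with $\tilde K$-dependent constants, which is all you need.

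\emph{Use $\mathcal N=I-\tilde\L_{\eps,N}$ throughout.} With $\Pi_N$ as written in \eqref{newtonstep}, $\Pi_N-D_f\tilde\L_{\eps,N}$ has range in $\mathrm{span}\{e_k:|k|\le N-1\}$, so it is singular on $F_N$. Moreover $(\Pi_N-\tilde\L_{\eps,N})h^*_{\eps,N}=\Pi_Nh^*_{\eps,N}-h^*_{\eps,N}\neq0$ in general. Your Neumann series in fact inverts $I-D_h\tilde\L_{\eps,N}$, which is the right operator and the one used in Proposition \ref{prop:4.4}.
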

\begin{proof}
See the proof of Theorem \ref{newtonthm}.
\end{proof}

\section{Proof of existence and convergence of the discretised fixed points of $\tilde{\L}_{\eps,N}$}
\subsection{Proof of Part 1 of Theorem \ref{thm:main1}}
\label{sec:thm21proof}
We start by proving that $\Pi_N$ has good regularity properties and it provides a good approximation. This result may be considered classical (see \cite{Zyg03}[ Ch.~II, \S 2.4--2.6] for the main ingredients on trigonometric estimates) and its proof is provided for completeness.
\begin{lemma}\label{lemkern}
The following statements hold:
\begin{itemize}
\item[1)] for $f \in W^{i,1}$, $i=0,1,\dots$, $\|\Pi_Nf\|_{W^{i,1}}\leq \|f\|_{W^{i,1}}$;
\item[2)] $\exists C>0$ such that for $f \in W^{i+1,1}$, $i=0,1,\dots$, $$\|\Pi_N f-f\|_{W^{i,1}}\leq C\frac{\ln(N)}{N}\|f\|_{W^{i+1,1}}.$$
\end{itemize} 
\end{lemma}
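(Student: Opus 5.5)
Both parts rest on two classical properties of the Fej\'er kernel: $K_N\ge 0$ and $\int_{\mathbb S^1}K_N=1$. In addition, convolution commutes with weak differentiation, so that $(\Pi_N f)^{(j)}=\Pi_N(f^{(j)})$ for $f\in W^{i,1}$ and $j\le i$. The plan is to prove everything at the $L^1$ level and then lift it to $W^{i,1}$ by applying the $L^1$ estimates to each derivative $f^{(j)}$, $j=0,\dots,i$, and summing. This works because the norm $\|\cdot\|_{W^{i,1}}$ is the sum of the $L^1$ norms of the derivatives.

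For 1), positivity and unit mass give, by Young's (or Minkowski's integral) inequality, $\|K_N*\varphi\|_1\le\|K_N\|_1\|\varphi\|_1=\|\varphi\|_1$ for every $\varphi\in L^1$. Applying this with $\varphi=f^{(j)}$ and summing over $j\le i$ yields $\|\Pi_N f\|_{W^{i,1}}\le\|f\|_{W^{i,1}}$.

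For 2), it suffices by the same lifting argument to show that $\|\Pi_N\varphi-\varphi\|_1\le C\frac{\ln N}{N}\|\varphi\|_{W^{1,1}}$ for $\varphi\in W^{1,1}$. Applying this to $\varphi=f^{(j)}\in W^{1,1}$ for $j\le i$ and summing gives the claim with $\|f\|_{W^{i+1,1}}$ on the right. To prove the $L^1$ estimate, write $\Pi_N\varphi(x)-\varphi(x)=\int K_N(t)\bigl(\varphi(x-t)-\varphi(x)\bigr)\,dt$, using unit mass, with $t\in[-1/2,1/2]$. Minkowski's inequality then gives
\[
\|\Pi_N\varphi-\varphi\|_1\le\int_{-1/2}^{1/2}K_N(t)\,\|\varphi(\cdot-t)-\varphi\|_1\,dt\le\|\varphi'\|_1\int_{-1/2}^{1/2}|t|\,K_N(t)\,dt,
\]
where the last step uses the standard bound $\|\varphi(\cdot-t)-\varphi\|_1\le|t|\,\|\varphi'\|_1$ for $W^{1,1}$ functions. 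This bound follows from $\varphi(x-t)-\varphi(x)=-\int_0^t\varphi'(x-s)\,ds$ and Fubini, and is justified for absolutely continuous representatives, or by density of smooth functions.

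The main step is therefore the first-moment estimate $\int|t|K_N(t)\,dt=O(\ln N/N)$. For it I would use the closed form $K_N(t)=\frac{1}{N}\bigl(\frac{\sin \pi N t}{\sin\pi t}\bigr)^2$ and split the integral at $|t|=1/N$. On $|t|\le 1/N$, the bound $K_N\le N$ gives a contribution $\le N\cdot\frac{2}{N}\cdot\frac1N=O(1/N)$. On $1/N\le|t|\le1/2$, the inequality $|\sin\pi t|\ge 2|t|$ gives $K_N(t)\le\frac{1}{4Nt^2}$, so the contribution is at most $\frac{2}{4N}\int_{1/N}^{1/2}\frac{dt}{t}=O\bigl(\frac{\ln N}{N}\bigr)$. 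Together these give $C\ln N/N$, with $C$ independent of $i$ and $f$. The only delicate points are this logarithmic tail, which is exactly why the rate is $\ln N/N$ rather than $1/N$, and the routine justification of the translation estimate; everything else is bookkeeping.
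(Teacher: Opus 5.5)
Your proposal is correct and follows essentially the same route as the paper. For part 1) both use $\|K_N\|_{L^1}=1$ together with $(\Pi_N f)^{(j)}=\Pi_N f^{(j)}$. For part 2) both reduce to $\|\Pi_N\varphi-\varphi\|_{L^1}\le\|\varphi'\|_{L^1}\int|t|K_N(t)\,dt$ and then bound the first moment of the Fej\'er kernel by a logarithmic estimate.

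Your two-region split at $|t|=1/N$, using $K_N\le N$ and $|\sin\pi t|\ge 2|t|$, is a slightly tidier version of the paper's three-case split. Your closed form $\frac1N\bigl(\frac{\sin\pi Nt}{\sin\pi t}\bigr)^2$ on $[-\frac12,\frac12]$ is also the one consistent with $e_k(x)=e^{2\pi ikx}$, whereas the paper's closed form is written on $[-\pi,\pi]$.
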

\begin{proof}
For the first item, note that
$\|\Pi_N f\|_{L^1}\leq \|K_N\|_{L^1}\|f\|_{L^1}$ and $\|K_N\|_{L^1}= 1$. This covers the $i=0$ case. To prove item 1) for $i\ge 1$, notice that by definition for $f\in L^1$, $\Pi_N f \in C^{\infty}$. Therefore, for $f\in W^{i,1}$, $i=1,2,\dots$
\begin{align*}
(\Pi_N f)^{(i)}(x):=&\left(\int K_N(x-y)f(y)dy \right)^{(i)}\\
=&\int K_n^{(i)}(x-y)f(y)dy\\
=&\int K_n(x-y)f^{(i)}(y)dy\\
=&\Pi_N f^{(i)}(x).
\end{align*} 
Then the second statement follows by using the above and the first statement of the Lemma. To prove the last item, it is sufficient to prove  for $f\in W^{1,1}$,
\begin{equation}\label{eq:idclose}
\|\Pi_N f-f\|_{L^1}\leq A\frac{\ln(N)}{N} \|f'\|_{L^1}.
\end{equation} 
Also the proof of the inequality in the case of $W^{i+1,1}$, $i=1, \dots$ follows from \eqref{eq:idclose} and the fact that $(\Pi_N f)^{(i)}(x)=\Pi_N f^{(i)}(x)$. To prove \eqref{eq:idclose} we first change variables in the definition of $\Pi_n$ (see equation \eqref{eq:discrete}) and write: 
\begin{align*}
\|\Pi_N f-f\|_{L^1}&\leq \int \int |K_N (y)(f(x+y)-f(x))|dydx\\
&\leq \int \int K_n (y)\int |f'(u)|1_{[x,y+x]}(u)dudydx\\
&\leq \int \int |f'(u)|K_N (y)\int 1_{[x,y+x]}(u)dxdydu\\
&\leq \int \int |f'(u)|K_N (y)\int 1_{[0,y]}(u-x)dxdydu\\
&\leq \int \int |f'(u)|K_N (y)ydydu\\
&\leq \|f'\|_{L^1}\|yK_N (y)\|_{L^1}.
\end{align*}
To compute $\|yK_N (y)\|_{L^1}$, we recall the following expression of $K_n$ : for any $y\in \mathbb S^1$ with $\mathbb S^1$ identified with $[-\pi,\pi]$,
\begin{align*}
K_N(y):=\frac1N\left(\frac{\sin(Ny/2)}{\sin(y/2)}\right)^2.
\end{align*}
We will control it according to the value of $y$.
\paragraph{Case 1: $ |y|\geq 1$.}
One can control $yK_N(y)$ as follows,
\begin{align*}
    |yK_N(y)|\leq |y|\frac{ 1}{N\sin^{2}(\frac 1 2)}.
\end{align*}
\paragraph{Case 2: $\frac 1 {N^2}\le |y|< 1.$}
We use the following control of $\sin$,
 using $y\geq \sin(y)\geq y-\frac{y^3}{6}$, 
 \begin{align*}
 |yK_N(y)| \leq \frac{2}{N|y|(1-\frac{y^2}{6})^2}\leq \frac{3}{N|y|}    
 \end{align*}
\paragraph{Case 3: $|y|\leq \frac 1 {N^2}$.} 
 \begin{align*}
     |yK_N(y)| \leq |y|\frac{2N^2y^2}{Ny^2(1-\frac{y^2}{6})^2}\leq 8N|y|.
 \end{align*}
Therefore,
\begin{align*}
\int |yK_N(y)|dy \leq& \int_{1\geq |y|\geq \frac{1}{N^2}} \frac{8}{N|y|}dy\\
&+\int_{|y|\geq 1}y\frac{1}{N\sin^2(\frac 1 2)}dy+\int_{|y|\leq \frac{1}{N^2}} 8N|y|dy\\
\leq& \frac{16\ln(N)}{N}+\frac {16\pi}{N}+\frac{2\pi}{N\sin^2(\frac 1 2)}.    
\end{align*}
Thus, proving $\|yK_N (y)\|_{L^1}\leq A\frac{\ln(N)}{N}$ for some constant $A>0$ and the last item of the lemma.
\end{proof}
{
\begin{lemma}\label{lem:discLY_co}
For any $N\in\mathbb N$ the operator $\Pi_N\tilde\L_\eps$ satisfies the following:
\begin{enumerate}
\item For any $\eps\in[0,\eps^*_0]$, for all $h\in W^{i,1}$, $i=1,2,3,4$
\begin{equation}
\|\Pi_N\tilde\L_\eps h\|_{W^{i,1}}\leq \lambda^{i}\|h\|_{W^{i,1}}+ C_{LY}\|h\|_{W^{i-1,1}},\nonumber
\end{equation}
where $C_{LY}$ and $\lambda$ are the same constants as in \eqref{eq:seqLY}.
Moreover, exists $\tilde C_{LY}>0$ such that for and any sequence  $(f_j)_{j\in \mathbb{N}} 
\in(\mathcal D_1)^{\mathbb N}$,
$$
 \|\Pi_N\L_{T_{\eps,f_n}}\circ\cdots \circ \Pi_N\L_{T_{\eps, f_1}}h\|_{W^{i,1}}\le \lambda^{in}\|h\|_{W^{i,1}}+\tilde C_{LY}\|h\|_{W^{i-1,1}}.
$$
\item for $\eps\in[0,\eps_0^*]$, $\Pi_N\tilde\L_\eps$ leaves $B_K$ invariant and hence has a fixed point $h_N^*\in B_K$. 
\end{enumerate}
\end{lemma}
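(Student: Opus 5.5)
The plan is to combine the uniform Lasota--Yorke inequality \eqref{eq:standLY} with the norm contraction property of $\Pi_N$ from Lemma \ref{lemkern}(1), and then to get the fixed point from a compactness argument in finite dimensions.

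\textbf{Step 1 (single step).} Fix $h\in W^{i,1}$. Since $T_{\eps,h}\in\cT$, \eqref{eq:standLY} with $n=1$ gives
\[\|\L_{T_{\eps,h}}h\|_{W^{i,1}}\le\lambda^i\|h\|_{W^{i,1}}+C_{LY}\|h\|_{W^{i-1,1}}.\]
By Lemma \ref{lemkern}(1), $\|\Pi_N\tilde\L_\eps h\|_{W^{i,1}}\le\|\tilde\L_\eps h\|_{W^{i,1}}$. The first inequality follows, with the same constants.

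\textbf{Step 2 (sequential bound).} A naive iteration of the one-step bound produces a term $C_{LY}\sum_j\lambda^{ij}\|h_j\|_{W^{i-1,1}}$, where $h_j$ is the $j$-th iterate. I control this by induction on $i$.
\begin{itemize}
\item For $i=0$, each $\Pi_N\L_{T}$ is an $L^1$ contraction, since $\L_T$ is and $\|K_N\|_{L^1}=1$. So $\|h_j\|_{L^1}\le\|h\|_{L^1}$.
\item Iterating the one-step inequality at level $i$ gives
\[\|h_n\|_{W^{i,1}}\le\lambda^{in}\|h\|_{W^{i,1}}+C_{LY}\sum_{j<n}\lambda^{i(n-1-j)}\|h_j\|_{W^{i-1,1}}.\]
\item The inductive hypothesis at level $i-1$ bounds each $\|h_j\|_{W^{i-1,1}}\le(1+\tilde C_{LY})\|h\|_{W^{i-1,1}}$.
\item The geometric sum then yields $\tilde C_{LY}=C_{LY}(1+\tilde C^{(i-1)})/(1-\lambda)$, adjusted upward at each level.
\end{itemize}
These constants are uniform in $N$, $\eps$ and the sequence $(f_j)$, because only $\cT$-uniform constants enter.

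An alternative is to use \eqref{eq:standLY} directly for $n$-fold compositions of distinct maps (the sequential LY \eqref{eq:seqLY} referred to in the statement) and interleave the $\Pi_N$'s. That does not work directly, since the $\Pi_N$'s sit between the maps. So the inductive route is the one I would follow.

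\textbf{Step 3 (invariance and fixed point).} Take $h\in B_K$. Then $\tilde\L_\eps h\ge0$ and has integral $1$. Since $K_N\ge0$ and $\int K_N=1$, $\Pi_N$ preserves positivity and mass, so $\Pi_N\tilde\L_\eps h\in\mathcal D_1$. Using $\|h\|_{W^{1,1}}\le\|h\|_{W^{2,1}}$ and Step 1 with $i=2$,
\[\|\Pi_N\tilde\L_\eps h\|_{W^{2,1}}\le\lambda^2K+C_{LY}K.\]

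\textbf{Main obstacle.} This crude bound is not $\le K$ unless $C_{LY}$ is small. The fix is to use the lower-order norm more carefully:
\[\|h\|_{W^{1,1}}=1+\|h'\|_1,\qquad \|h'\|_1\le\|h''\|_1,\]
the latter since $h'$ has mean zero on the circle. With the LY inequality in its standard form, which bounds the top seminorm by $\lambda^2\|h''\|_1+C_{LY}\|h\|_{W^{1,1}}$, I then bootstrap. If this bookkeeping is insufficient, I would instead apply the $n$-step bound of Step 2 to an iterate, or replace $B_K$ by a suitable ball, and show invariance there. The choice $K\ge C_{LY}/(1-\lambda^2)$ is exactly the fixed point of $x\mapsto\lambda^2x+C_{LY}$, which indicates the intended estimate is at the seminorm level.

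\textbf{Brouwer step.} Once invariance holds, $\Pi_NB_K\subset F_N$ is finite-dimensional. The set $B_K\cap F_N$ is convex, closed and bounded, hence compact. The map $h\mapsto\Pi_N\L_{T_{\eps,h}}h$ is continuous there, because the dependence of $T_{\eps,h}$ on $h$ is smooth via $g\in C^5$. Any fixed point of $\Pi_N\tilde\L_\eps$ automatically lies in $F_N$, so it suffices to look for one in $B_K\cap F_N$. Brouwer's theorem then yields the fixed point $h_N^*\in B_K$.
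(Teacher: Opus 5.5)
Your Step 1 is exactly the paper's argument. Your Step 2 is correct and more explicit than the paper's bare citation of \eqref{eq:seqLY}: you iterate the one-step bound and induct on the regularity level, starting from the $L^1$-contraction of $\Pi_N\L_{T_{\eps,f}}$. You are right that \eqref{eq:seqLY} cannot be applied verbatim once the $\Pi_N$'s are interleaved. The Brouwer argument on the compact convex set $B_K\cap F_N$ is also fine.

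The gap is in Step 3: you never prove $\Pi_N\tilde\L_\eps(B_K)\subseteq B_K$, which is the substance of item 2. You correctly see that Step 1 with $i=2$ only gives $\lambda^2K+C_{LY}\|h\|_{W^{1,1}}$, and that $\|h\|_{W^{1,1}}=1+\|h'\|_1\ge 1$ on densities. But none of the repairs you list works as stated:
\begin{itemize}
\item Inserting $\|h\|_{W^{1,1}}\le 1+\|h''\|_1$ into the seminorm inequality gives $\|(\L h)''\|_1\le(\lambda^2+C_{LY})\|h''\|_1+C_{LY}$. The factor $\lambda^2+C_{LY}$ need not be below $1$, so the bootstrap does not close.
\item The $n$-step bound of Step 2 carries the same non-closing weak term, now with $\tilde C_{LY}\ge C_{LY}$. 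At best it confines iterates to a larger ball.
\item Replacing $B_K$ by another ball proves a different statement, whereas this specific $B_K$ is used in everything downstream.
\end{itemize}

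The missing step is a reduction, not a sharper estimate. The paper does not re-derive invariance from the Lasota--Yorke inequality. It takes as a standing input, built into the choice of $K$ and the literature behind Theorem \ref{thm:gen}, that the unprojected operator satisfies $\tilde\L_\eps(B_K)\subseteq B_K$. It then uses only two properties of $\Pi_N$ you already have: $\Pi_N$ maps $\mathcal D_1$ into $\mathcal D_1$ because $K_N\ge 0$ and $\int K_N=1$, and $\|\Pi_N u\|_{W^{2,1}}\le\|u\|_{W^{2,1}}$ by Lemma \ref{lemkern}(1). Hence for $h\in B_K$ one has $\Pi_N\tilde\L_\eps h=\Pi_N(\tilde\L_\eps h)\in\Pi_N(B_K)\subseteq B_K$. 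A fixed-point theorem then finishes: the paper uses Schauder, and your Brouwer argument on $B_K\cap F_N$ works equally well.

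Your suspicion about the constants is reasonable. The choice $K\ge C_{LY}/(1-\lambda^2)$ closes when the weak term of the Lasota--Yorke inequality is the $L^1$ norm, which equals $1$ on densities, rather than the $W^{1,1}$ norm. In the paper's framework this is absorbed into the assumed invariance of $B_K$ under $\tilde\L_\eps$, and that reduction is what your proposal lacks.
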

\begin{proof}
The first statement follows from 1), 2) of Lemma \ref{lemkern} and \eqref{eq:seqLY}. While the second statement follows from 2) of Lemma \ref{lemkern}, the fact that $\tilde\L_\eps(B_k)\subseteq B_k$ and the Schauder fixed point theorem.   
\end{proof}
This completes the proof of part 1 of Theorem \ref{thm:main1}
.}

\subsection{Proof of Part 2 of Theorem \ref{thm:main1}}

{
\begin{lemma}\label{lem:contracto}
There are $\eps_1^*>0$ and $N_0\in \mathbb N$, $\tilde \theta (N_0) >0$, such that for any $N\geq N_0$, $n\in\mathbb{N}$, $\eps\in[0,\eps_1^*]$, $h\in V^{i,1}$, $i=1,2$ and any sequence  $(f_j)_{j\in \mathbb{N}} \in (B_K)^{\mathbb{N}}$,
\begin{align*}
    \|\Pi_N\L_{T_{\eps,f_n}}\circ\cdots \circ \Pi_N\L_{T_{\eps, f_1}}h\|_{W^{i,1}}\le C \tilde \theta^n\|h\|_{W^{i,1}}.
\end{align*}
\end{lemma}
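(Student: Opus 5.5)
Our plan is a perturbation argument starting from the unperturbed, undiscretised operator $\L_T$ acting on mean-zero functions. We show that compositions of the perturbed, discretised operators $\Pi_N\L_{T_{\eps,f_j}}$ stay uniformly close to $\L_T^n$ over a fixed block length. We then bootstrap to all $n$ using the uniform Lasota--Yorke bounds of Lemma \ref{lem:discLY_co}.

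\textbf{Step 1 (the unperturbed limit).} Since $\L_T$ has a spectral gap on $W^{i,1}$ ($i=1,2$) and preserves $V_0$, there are $C_1>0$ and $\theta\in(0,1)$ with $\|\L_T^n h\|_{W^{i,1}}\le C_1\theta^n\|h\|_{W^{i,1}}$ for $h\in V^{i,1}$. We fix $m$ so that $C_1\theta^m\le 1/4$.

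\textbf{Step 2 (one-step closeness in a weaker norm).} We estimate $\Pi_N\L_{T_{\eps,f}}-\L_T$ as an operator from $W^{i,1}$ to $W^{i-1,1}$. We split it as
\[
\Pi_N\L_{T_{\eps,f}}-\L_T=(\Pi_N-I)\L_{T_{\eps,f}}+(\L_{T_{\eps,f}}-\L_T).
\]
For the first term, Lemma \ref{lemkern}(2) together with the Lasota--Yorke bound \eqref{eq:standLY} gives $\|(\Pi_N-I)\L_{T_{\eps,f}}h\|_{W^{i-1,1}}\le C\frac{\ln N}{N}\|h\|_{W^{i,1}}$. For the second term, $T_{\eps,f}$ is a $C^5$-perturbation of $T$ of size $O(\eps)$, uniformly in $f\in\mathcal D_1$, because $\|\eps\int g(\cdot,y)f(y)dy\|_{C^5}\le\eps\|g\|_{C^5}$. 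The standard estimate for transfer operators of $C^r$-close expanding maps (bounded from $W^{i,1}$ to $W^{i-1,1}$) then gives $\le C\eps\|h\|_{W^{i,1}}$. Combining the two, the one-step error is $\delta(\eps,N):=C(\eps+\ln N/N)$ in the $W^{i,1}\to W^{i-1,1}$ norm.

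\textbf{Step 3 (block contraction, the main obstacle).} We telescope
\[
\prod_{j=1}^m\Pi_N\L_{T_{\eps,f_j}}-\L_T^m=\sum_{k}(\cdots)\bigl(\Pi_N\L_{T_{\eps,f_k}}-\L_T\bigr)\L_T^{k-1}.
\]
The outer factors are bounded uniformly in $W^{i-1,1}$ by Lemma \ref{lem:discLY_co}, so the difference is at most $mC\delta\|h\|_{W^{i,1}}$ in the $W^{i-1,1}$ norm. The obstacle is that this gives closeness only in the weaker norm. We resolve it as in Keller--Liverani. Write $P_m:=\prod_{j=1}^m\Pi_N\L_{T_{\eps,f_j}}$ and apply Lemma \ref{lem:discLY_co} to a further block of length $m$:
\[
\|P'_mP_mh\|_{W^{i,1}}\le\lambda^{im}\|P_mh\|_{W^{i,1}}+\tilde C_{LY}\|P_mh\|_{W^{i-1,1}}.
\]
Next we bound the weak norm by $\|P_mh\|_{W^{i-1,1}}\le\|\L_T^mh\|_{W^{i-1,1}}+mC\delta\|h\|_{W^{i,1}}$, and then $\|\L_T^mh\|_{W^{i-1,1}}\le\|\L_T^mh\|_{W^{i,1}}\le\tfrac14\|h\|_{W^{i,1}}$. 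Two points need care here. First, every operator preserves integrals, since $\Pi_N$ preserves the zeroth Fourier mode, so $V_0$ is invariant throughout. Second, to beat the constant $\tilde C_{LY}$ we choose $m$ with $C_1\tilde C_{LY}\theta^m\le 1/8$ and also $\lambda^{im}(1+\tilde C_{LY})\le 1/8$. With this $m$ fixed, we pick $\eps_1^*$ small and $N_0$ large so that $\tilde C_{LY}mC\delta\le 1/8$. The result is $\|P_{2m}h\|_{W^{i,1}}\le\tfrac38\|h\|_{W^{i,1}}$ for every sequence $(f_j)$.

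\textbf{Step 4 (conclusion).} We write $n=2mq+r$ with $0\le r<2m$ and iterate the block bound, which applies to any sequence $(f_j)$. The remainder is handled by the uniform bound $\|\cdot\|\le 1+\tilde C_{LY}$ from Lemma \ref{lem:discLY_co}. This gives the claim with $\tilde\theta=(3/8)^{1/(2m)}$ and $C=(1+\tilde C_{LY})(8/3)$.
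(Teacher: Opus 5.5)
Your proof is correct, but it takes a different route from the paper's.

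\textbf{The paper's route.} The paper compares the discretised composition $\Pi_N\L_{T_{\eps,f_n}}\circ\cdots\circ\Pi_N\L_{T_{\eps,f_1}}$ not with $\L_T^n$ but with the undiscretised composition $\L_{T_{\eps,f_n}}\circ\cdots\circ\L_{T_{\eps,f_1}}$, driven by the same sequence $(f_j)$. It takes the exponential loss of memory of the latter as given, from item 2 of Theorem \ref{thm:gen}, which is where $\eps_1^*$ comes from. Because that comparison sequence decays like $\theta^k$, the telescoped errors from inserting $\Pi_N-I$ are summable. This gives a weak-norm discrepancy of order $\frac{\ln N}{N}\|h\|_{W^{i,1}}$ that is uniform in the length of the composition. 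The same Lasota--Yorke bootstrap you use then only requires taking $N_0$ large.

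\textbf{Your route.} You perturb from the single operator $\L_T$, treating coupling and discretisation together as one perturbation of size $\delta=C(\eps+\ln N/N)$ in $W^{i,1}\to W^{i-1,1}$ over a block of fixed length $m$. The linear growth $m\delta$ of the error is harmless, because $m$ is fixed before $\eps_1^*$ and $N_0$ are chosen.

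\textbf{What each buys.} Your argument is more self-contained. It needs only the spectral gap of $\L_T$, the uniform sequential Lasota--Yorke bound, and an $O(\eps)$ weak-norm closeness of $\L_{T_{\eps,f}}$ to $\L_T$. It applies to arbitrary sequences in $\mathcal D_1$, not just $B_K$. With $\Pi_N$ replaced by the identity, it reproves item 2 of Theorem \ref{thm:gen} rather than citing it. The paper's route keeps the two perturbations separate. Its $\eps_1^*$ is exactly the threshold of the undiscretised theory, and only the discretisation needs a new smallness condition. In your argument the two smallness conditions are coupled through $m$. As a by-product, the paper also gets the uniform-in-time closeness \eqref{eq:memloss3} of discretised and undiscretised sequential dynamics on mean-zero functions.

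\textbf{A small correction to Step 2.} The perturbation is $T_{\eps,f}-T=\eps\bigl(\int g(\cdot,y)f(y)\,dy\bigr)\circ T$, not $\eps\int g(\cdot,y)f(y)\,dy$. Its $C^5$ size is still $O(\eps)$, but the constant also depends on $\|T\|_{C^5}$. Alternatively, write $\L_{T_{\eps,f}}-\L_T=(\L_{I_{\eps,f}}-I)\L_T$ and use that $I_{\eps,f}$ is $O(\eps)$-close to the identity in $C^5$. This does not affect the argument.
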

\begin{proof}
By Lemma \ref{lem:discLY_co}  $\exists C>0$ so that for any $h\in V^{i,1}$, $n\in \mathbb N$,
\begin{align}\label{eq:lunbound}
    \|\Pi_N\L_{T_{\eps,f_n}}\circ\cdots \circ \Pi_N\L_{T_{\eps, f_1}}h\|_{W^{i,1}}\leq C \|h\|_{W^{i,1}}.
\end{align}
Then,
\begin{align}
    &\|\Pi_N\L_{T_{\eps,f_n}}\circ\cdots \circ \Pi_N\L_{T_{\eps, f_1}}h-\L_{T_{\eps,f_n}}\circ\cdots \circ\L_{T_{\eps, f_1}}h\|_{W^{i-1, 1}}\nonumber\\
    &\leq \sum_{k=1}^{n}\|\Pi_N\L_{T_{\eps,f_n}}\circ\cdots \circ \Pi_N\L_{T_{\eps, f_{k+1}}}
    \left(\Pi_n-I\right)\L_{T_{\eps,f_{k}}}\circ\cdots \circ\L_{T_{\eps, f_1}}h\|_{W^{i-1, 1}}\nonumber\\
    &\leq C\sum_{k=1}^{n}\|(\Pi_N-I)\L_{T_{\eps,f_{k}}}\circ\cdots \circ\L_{T_{\eps, f_1}}h\|_{W^{i-1, 1}}\label{eq:memloss1}\\
    &\leq \sum_{k=1}^{n}C\theta^{k}\|\Pi_N-I\|_{W^{i,1}\to{W^{i-1,1}} }\|h\|_{W^{i,1}}\label{eq:memloss2}\\
    &\leq CA \frac{\ln(N)}{N}\sum_{k=1}^{n}\theta^{k}\|h\|_{W^{i,1}}\label{eq:memloss2'}\\
    &\leq C\frac{A\theta}{1-\theta} \frac{\ln(N)}{N}\|h\|_{W^{i,1}},\label{eq:memloss3}
\end{align}
where in \eqref{eq:memloss1} we used \eqref{eq:lunbound}, in \eqref{eq:memloss2} the second item of Theorem \ref{thm:gen}, and in \eqref{eq:memloss2'} the third item of Lemma \ref{lemkern}.
Now let $n,m\in \mathbb N$, using first Lemma \ref{lem:discLY_co}, followed by \eqref{eq:memloss3}, then the second item of Theorem \ref{thm:gen}, we obtain
 \begin{align*}
    &\|\Pi_N\L_{T_{\eps,f_{n+m}}}\circ\cdots \circ \Pi_N\L_{T_{\eps, f_1}}h\|_{W^{i,1}}\\
   &\le \lambda^{in}\|\Pi_N\L_{T_{\eps,f_{m}}}\circ\cdots \circ \Pi_N\L_{T_{\eps, f_1}}h\|_{W^{i,1}}
   +\tilde C_{LY}\|\Pi_N\L_{T_{\eps,f_{m}}}\circ\cdots \circ \Pi_N\L_{T_{\eps, f_1}}h\|_{W^{i-1,1}}\\
   &\le \lambda^{i(n+m)}\|h\|_{W^{i,1}} +C_1\frac{A\theta}{1-\theta} \frac{\ln(N)}{N}\|h\|_{W^{i,1}}+C_2\lambda^{im}\|h\|_{W^{i,1}}.
 \end{align*}
Consequently, choosing $N$ Large enough so that $C_1\frac{A\theta}{1-\theta} \frac{\ln(N)}{N}<\frac13\alpha$, then $m$ large enough so that $C_2\lambda^{im}<\frac13\alpha$, and finally $n$ large enough so that $\lambda^{i(n+m)}<\frac13\alpha$, for some $\alpha\in(0,1)$, to obtain 
$$\|\Pi_N\L_{T_{\eps,f_{n+m}}}\circ\cdots \circ \Pi_N\L_{T_{\eps, f_1}}h\|_{W^{i,1}}\le\alpha \|h\|_{W^{i,1}}.$$
The proof is completed by iterating the above inequality.
\end{proof}

\begin{lemma}\label{lem:convhN}
There is $\eps_{3}^*\in (0, \eps_2^*)$ such that for $\eps\in [0,\eps_{3}^*]$, if $h_\varepsilon^*$ is the unique fixed point of $\tilde\L_\eps$ and $h_{\varepsilon,N}^*$ is a fixed point of  $\Pi_N\tilde\L_\eps$ then
$$\|h_{\varepsilon,N}^*-h_\eps^*\|_{W^{1,1}}=O\left(\frac{\ln N}{N}\right).$$
\end{lemma}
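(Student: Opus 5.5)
We plan to compare the two fixed points by a telescoping/perturbation argument in the $W^{1,1}$ norm, using the uniform contraction of Lemma \ref{lem:contracto} on zero-mean functions together with a Lipschitz estimate of the map $f\mapsto \L_{T_{\eps,f}}$ in the coupling variable that is small in $\eps$. Write $h^*=h^*_\eps$, $h_N^*=h^*_{\eps,N}$; both lie in $B_K$ by Theorem \ref{thm:gen} and Lemma \ref{lem:discLY_co}. The difference splits as
\begin{align*}
h_N^*-h^* &= \Pi_N\L_{T_{\eps,h_N^*}}h_N^*-\L_{T_{\eps,h^*}}h^*\\
&= \Pi_N\L_{T_{\eps,h_N^*}}(h_N^*-h^*) + \Pi_N\bigl(\L_{T_{\eps,h_N^*}}-\L_{T_{\eps,h^*}}\bigr)h^* + (\Pi_N-I)\L_{T_{\eps,h^*}}h^*.
\end{align*}
The first term acts on the zero-mean function $h_N^*-h^*\in V^{1,1}$. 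The third term is $(\Pi_N-I)h^*$, which by item 2) of Lemma \ref{lemkern} has $W^{1,1}$ norm at most $C\frac{\ln N}{N}\|h^*\|_{W^{2,1}}\le CK\frac{\ln N}{N}$.

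For the middle term we use the standard estimate (as in the proof of Theorem \ref{thm:gen}) that for $f_1,f_2\in B_K$ and $h\in W^{2,1}$,
\[
\|(\L_{T_{\eps,f_1}}-\L_{T_{\eps,f_2}})h\|_{W^{1,1}}\le C\eps\|f_1-f_2\|_{L^1}\|h\|_{W^{2,1}}.
\]
This holds because $T_{\eps,f_1}$ and $T_{\eps,f_2}$ differ by the constant-in-$y$ shift $\eps\int g(\cdot,y)(f_1-f_2)(y)dy$ in $C^4$, and differentiating transfer operators in the map loses one derivative. With item 1) of Lemma \ref{lemkern}, the middle term is at most $C\eps K\|h_N^*-h^*\|_{W^{1,1}}$.

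A single step of the first term only gives a bound $\lambda\|\cdot\|_{W^{1,1}}+C_{LY}\|\cdot\|_{L^1}$, so we iterate instead. Setting $e=h_N^*-h^*$ and unrolling $n$ times,
\[
e=\Pi_N\L_{T_{\eps,h_N^*}}^{\,n}\text{-composition}\,e+\sum_{k=0}^{n-1}(\Pi_N\L_{T_{\eps,h_N^*}})^k\bigl[\text{middle}+\text{third}\bigr].
\]
All brackets have zero mean, so Lemma \ref{lem:contracto} with the constant sequence $f_j=h_N^*\in B_K$ gives, for $N\ge N_0$,
\[
\|e\|_{W^{1,1}}\le C\tilde\theta^n\|e\|_{W^{1,1}}+\frac{C}{1-\tilde\theta}\Bigl(C\eps K\|e\|_{W^{1,1}}+CK\tfrac{\ln N}{N}\Bigr).
\]
We choose $n$ so that $C\tilde\theta^n\le 1/4$, and $\eps_3^*\le\min(\eps_1^*,\eps_2^*)$ so that $\frac{C^2\eps K}{1-\tilde\theta}\le 1/4$. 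Absorbing these terms into the left side gives $\|e\|_{W^{1,1}}\le C'\frac{\ln N}{N}$ for $N\ge N_0$. Finitely many $N<N_0$ are harmless, since $\|e\|_{W^{1,1}}\le 2K$.

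The main obstacle is the Lipschitz estimate for the middle term, and in particular keeping the norms consistent: we need $W^{2,1}\to W^{1,1}$ with the dependence on $f$ measured in a norm no stronger than $W^{1,1}$. If the cited Theorem \ref{thm:gen} only supplies this in $W^{1,1}\to L^1$, we would first run the argument above in $L^1$, using Lemma \ref{lem:contracto} at level $i=1$ and the $L^1$ contraction on $V_0$. We would then upgrade to $W^{1,1}$ through the Lasota--Yorke inequality of Lemma \ref{lem:discLY_co} applied to $e$, which bounds $\|e\|_{W^{1,1}}$ by $\lambda\|e\|_{W^{1,1}}$ plus $L^1$-type and $\frac{\ln N}{N}$ terms.
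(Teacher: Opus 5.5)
Your argument is correct, but it takes a different route from the paper's.

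\textbf{The paper's route.} The paper never writes an equation for $h^*_{\eps,N}-h^*_\eps$ directly. It starts the \emph{true} nonlinear iteration at $h^*_{\eps,N}$ and uses item 3 of Theorem \ref{thm:gen} to get $\|\tilde\L_\eps^n h^*_{\eps,N}-h^*_\eps\|_{W^{1,1}}\le C_K\gamma^n$. It then bounds $U_n^N=\|h^*_{\eps,N}-\tilde\L_\eps^n h^*_{\eps,N}\|_{W^{1,1}}$ by telescoping the two trajectories. This splits into a $(\Pi_N-I)$ part, controlled by Lemma \ref{lemkern}, and a state-mismatch part, controlled by \eqref{eq:closeop}; both are propagated with Lemma \ref{lem:contracto}. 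The result is a Gronwall-type recursion $U_n^N\le C\frac{\ln N}{N}+K\eps\sum_k\tilde\theta^{n-k}U_k^N$. The paper solves it for small $\eps$ and then chooses $n\sim\ln N$ to balance $\gamma^n$ against $\frac{\ln N}{N}$.

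\textbf{Your route.} You subtract the two fixed-point equations and freeze the operator at $h^*_{\eps,N}$. You then invert $I-\Pi_N\L_{T_{\eps,h^*_{\eps,N}}}$ on zero-mean functions using Lemma \ref{lem:contracto} with a constant sequence. You could even let $n\to\infty$ and write $e=\sum_{k\ge 0}(\Pi_N\L_{T_{\eps,h^*_{\eps,N}}})^k b$. Finally you absorb the $O(\eps K)\|e\|_{W^{1,1}}$ contribution of the middle term for small $\eps$.

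\textbf{Comparison.} Your version is shorter and avoids both the recursion and the choice of $n$. It also does not use the exponential convergence of the nonlinear iteration (item 3 of Theorem \ref{thm:gen}) at all. As a by-product, it shows that every fixed point of $\tilde\L_\eps$ in $B_K$ is $O(\ln N/N)$-close to $h^*_{\eps,N}$, which forces uniqueness in $B_K$ for $\eps\le\eps_3^*$. The paper's trajectory comparison fits the sequential-iteration viewpoint used elsewhere in the paper and reuses the known global attraction, but it is not needed for this statement.

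\textbf{Your final paragraph.} The fallback is unnecessary: \eqref{eq:closeop} with $i=1$ is exactly the $W^{2,1}\to W^{1,1}$ bound you need, with the $f$-dependence measured in $W^{1,1}$. Its proof in fact yields the $L^1$ norm of $f_1-f_2$.
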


\begin{proof}

For $\eps\in[0,\eps^*_{3}]$, for any $N\in\mathbb N$, by Lemma \ref{lem:discLY_co}, the operator $\Pi_N\tilde\L_\eps$ has a fixed point $h_{\eps,N}^*\in B_K$, and by Theorem \ref{thm:gen},
\[
\|{\tilde\L}_\eps^{n}(h_{\eps,N}^*)-h_\eps^*\|_{W^{1,1}}\le C_K \gamma^n,
\]
where $h_\eps^*\in B_K$ is the unique fixed of $\tilde\L_\eps$. We have
\begin{equation}\label{eq:fixpoint}
\begin{split}
\|h_{\varepsilon,N}^*-h_\eps^*\|_{W^{1,1}}\le& \|(\Pi_N\tilde\L_\eps)^n(h_{\varepsilon,N}^*)-{\tilde\L}_\eps^{n}(h_{\eps,N}^*)\|_{W^{1,1}}+\|{\tilde\L}_\eps^{n}(h_{\eps,N}^*)-h_\eps^*\|_{W^{1,1}}\\
&\le\|(\Pi_N\tilde\L_\eps)^n(h_{\varepsilon,N}^*)-{\tilde\L}_\eps^{n}(h_{\eps,N}^*)\|_{W^{1,1}} +C_K \gamma^n.
\end{split}
\end{equation}
Where $(\Pi_N\tilde\L_\eps)^n$ is a notation for the $n^{th}$ iterate of the map $\Pi_N\tilde\L_\eps$.  Define the sequence $(h_{\eps,N}^{*_{n}})_{n\in \mathbb N}$ by $h_{\eps,N}^{*_n}:=\tilde \L_\eps^n h_N^*$. Then,
\begin{align*}
&\|(\Pi_N\tilde\L_\eps)^n(h_{\varepsilon,N}^*)-{\tilde\L}_\eps^{n}(h_{\eps,N}^*)\|_{W^{1,1}}\\
&=\|\sum_{k=0}^{n-1}(\Pi_N\circ \L_{T_{\eps,h_{\varepsilon,N}^*}})^{n-k}\left(\Pi_N \L_{\eps,T_{\eps,h_N^*}}-\L_{T_{\eps,h_{\eps,N}^{*_k}}}\right) \L_{T_{\eps,h_{\eps,N}^{*_{k-1}}}}\circ\cdots \circ\L_{T_{\eps, h_{\eps,N}^{*}}}h_{\eps,N}^*\|_{W^{1, 1}}\nonumber\\
&=\|\sum_{k=0}^{n-1}(\Pi_N\circ \L_{T_{\eps,h_{\varepsilon,N}^*}})^{n-k}\left[\left(\Pi_N-I\right) \L_{\eps,T_{\eps,h_N^*}}+\left(\L_{\eps,T_{\eps,h_N^*}}-\L_{T_{\eps,h_{\eps,N}^{*_k}}}\right)\right] \L_{T_{\eps,h_{\eps,N}^{*_{k-1}}}}\circ\cdots \circ\L_{T_{\eps, h_{\eps,N}^{*}}}h_{\eps,N}^*\|_{W^{1, 1}}\nonumber\\
&=(I)+(II),
    \end{align*}
where 
$$(I):= \sum_{k=0}^{n-1}(\Pi_N\circ \L_{T_{\eps,h_{\varepsilon,N}^*}})^{n-k}\left[\left(\Pi_N-I\right) \L_{\eps,T_{\eps,h_N^*}}\right] \L_{T_{\eps,h_{\eps,N}^{*_{k-1}}}}\circ\cdots \circ\L_{T_{\eps, h_{\eps,N}^{*}}}h_{\eps,N}^*\|_{W^{1, 1}},$$
and 
$$
(II):=\|\sum_{k=0}^{n-1}(\Pi_N\circ \L_{T_{\eps,h_{\varepsilon,N}^*}})^{n-k}\left[\L_{\eps,T_{\eps,h_N^*}}-\L_{T_{\eps,h_{\eps,N}^{*_k}}}\right] \L_{T_{\eps,h_{\eps,N}^{*_{k-1}}}}\circ\cdots \circ\L_{T_{\eps, h_{\eps,N}^{*}}}h_{\eps,N}^*\|_{W^{1, 1}}.
$$
We first treat $(I)$. Note in $(I)$, $(\Pi_N\circ \L_{T_{\eps,h_{\varepsilon,N}^*}})^{n-k}$ acts on a zero-mean function, thus Lemma \ref{lem:contracto} applies and using Lemma \ref{lemkern} alongside it to control the term $(\Pi_N-I)$, we obtain:

\begin{align}
(I)&=\|\sum_{k=0}^{n-1}(\Pi_N\circ \L_{T_{\eps,h_{\varepsilon,N}^*}})^{n-k}\left[\left(\Pi_N-I\right) \L_{\eps,T_{\eps,h_N^*}}\right] \L_{T_{\eps,h_{\eps,N}^{*_{k-1}}}}\circ\cdots \circ\L_{T_{\eps, h_{\eps,N}^{*}}}h_{\eps,N}^*\|_{W^{1, 1}}\nonumber\\
&\leq \sum_{k=0}^{n-1}\tilde \theta^{n-k}\|\left(\Pi_N-I\right)\|_{W^{2,1}\to W^{1,1}} \|\L_{\eps,T_{\eps,h_N^*}}\circ \L_{T_{\eps,h_{\eps,N}^{*_{k-1}}}}\circ\cdots \circ\L_{T_{\eps, h_{\eps,N}^{*}}}h_{\eps,N}^*\|_{W^{2, 1}}\nonumber\\
&\leq C\frac{\ln(N)}{N}\sum_{k=0}^{n-1}\tilde \theta^{n-k} \|\L_{\eps,T_{\eps,h_N^*}}\circ \L_{T_{\eps,h_{\eps,N}^{*_{k-1}}}}\circ\cdots \circ\L_{T_{\eps, h_{\eps,N}^{*}}}h_{\eps,N}^*\|_{W^{2, 1}}\nonumber\\
&\leq KC\frac{\ln(N)}{N}\sum_{k=0}^{n-1}\tilde \theta^{n-k},\label{eq:fixpartun}
\end{align}
where in the last line we used that $h_N^*\in B_K$ and Lemma \ref{lem:discLY_co}.

We now deal with $(II)$, using again Lemma \ref{lem:contracto} and \eqref{eq:closeop} from Lemma \ref{introprop},
\begin{align}
(II)&=\|\sum_{k=0}^{n-1}(\Pi_N\circ \L_{T_{\eps,h_{\varepsilon,N}^*}})^{n-k}\left[\L_{\eps,T_{\eps,h_N^*}}-\L_{T_{\eps,h_{\eps,N}^{*_k}}}\right] \L_{T_{\eps,h_{\eps,N}^{*_{k-1}}}}\circ\cdots \circ\L_{T_{\eps, h_{\eps,N}^{*}}}h_{\eps,N}^*\|_{W^{1, 1}}\nonumber\\
&\leq \sum_{k=0}^{n-1}\tilde \theta^{n-k}\|\left[\L_{\eps,T_{\eps,h_N^*}}-\L_{T_{\eps,h_{\eps,N}^{*_k}}}\right] \L_{T_{\eps,h_{\eps,N}^{*_{k-1}}}}\circ\cdots \circ\L_{T_{\eps, h_{\eps,N}^{*}}}h_{\eps,N}^*\|_{W^{1, 1}}\nonumber\\
&\leq \sum_{k=0}^{n-1}\tilde \theta^{n-k} \eps \|h_N^*-h_{\eps,N}^{*_k}\|_{W^{1,1}} \|\L_{T_{\eps,h_{\eps,N}^{*_{k-1}}}}\circ\cdots \circ\L_{T_{\eps, h_{\eps,N}^{*}}}h_{\eps,N}^*\|_{W^{2, 1}}\nonumber\\
&\leq K\eps\sum_{k=1}^{n-1}\tilde \theta^{n-k}\|h_N^*-h_{\eps,N}^{*_k}\|_{W^{1,1}},\label{eq:fixpartdeux}
\end{align}
where the last line is obtained using the fact that $h_N^*\in B_K$ and \eqref{eq:seqLY}.
Let $$U_n^N:=\|(\Pi_N\tilde\L_\eps)^n(h_{\varepsilon,N}^*)-{\tilde\L}_\eps^{n}(h_{\eps,N}^*)\|_{W^{1,1}}.$$ 
Notice that by \eqref{eq:fixpartun} and \eqref{eq:fixpartdeux}, satisfied by $(I)$ and $(II)$, we obtain the following inequality

\begin{align*}
U_n^N &\leq KC\frac{\ln(N)}{N}\sum_{k=0}^{n-1}\tilde \theta^{n-k}+K\eps\sum_{k=0}^{n-1}\tilde \theta^{n-k}U_k^N\\
&\leq KC\frac{\ln(N)}{N}\frac{\tilde \theta}{1-\tilde \theta}+K\eps\sum_{k=0}^{n-1}\tilde \theta^{n-k}U_k^N.
\end{align*}
Therefore, for any $n\in \mathbb N$, $U_n^N\leq V_n^N$ where $(V_n^N)_{n\in \mathbb N}$ is a sequence defined inductively by

\begin{align*}
  V_n^N=KC\frac{\ln(N)}{N}\frac{\tilde \theta}{1-\tilde \theta}+K\eps\sum_{k=1}^{n-1}\tilde \theta^{n-k}V_k^N,
\end{align*}
with $V_1^N=U_1^N=\|h_N^*-h_{\eps,N}^{*_1}\|_{W^{1,1}}$. In other words, 
\begin{align*}
  V_n^N&=KC\frac{\ln(N)}{N}\frac{\tilde \theta}{1-\tilde \theta}+K\eps\sum_{k=1}^{n-1}\tilde \theta^{n-k}V_k^N\\
  &=KC\frac{\ln(N)}{N}\frac{\tilde \theta}{1-\tilde \theta}+K\eps \tilde \theta V_{n-1}^N +\tilde \theta K \eps\sum_{k=1}^{n-2}\tilde \theta^{n-1-k}V_k^N\\
  &=KC\frac{\ln(N)}{N}\frac{\tilde \theta}{1-\tilde \theta}+K\eps \tilde \theta V_{n-1}^N +\tilde \theta \left( V_{n-1}^N-KC\frac{\ln(N)}{N}\frac{\tilde \theta}{1-\tilde \theta}\right)\\
&=KC\frac{\ln(N)}{N}\tilde \theta+(K\eps +1)\tilde \theta  V_{n-1}^N.
\end{align*}
Consequently, choosing $\eps_{3}^*$ such that $(K\eps_{3}^* +1)\tilde \theta:=\tilde\theta_1<1$, for any $\eps <\eps_{3}^*$ the sequence $(V_n^N)_{n\in \mathbb N}$ is of the form
\begin{align*}
V_n^N:=\tilde\theta_1^{n-1}V_1^N+KC\frac{\ln(N)}{N}\frac{\tilde \theta}{1-\tilde \theta}.
\end{align*}
Thus, using the above in \eqref{eq:fixpoint}, for any $n\in \mathbb N$,

\begin{align*}
\|h_{\varepsilon,N}^*-h_\eps^*\|_{W^{1,1}}
&\leq C_K \gamma^n+\tilde\theta_1^{n-1}\|h_N^*-h_{\eps,N}^{*_1}\|_{W^{1,1}}+KC\frac{\ln(N)}{N}\frac{\tilde \theta}{1-\tilde \theta}.\\
\end{align*}
One can then choose $n=-\ln(N)(\ln(\gamma)\vee \ln(\tilde\theta_1))$ to obtain
\begin{align*}
\|h_{\varepsilon,N}^*-h_\eps^*\|_{W^{1,1}}&\leq  C_K N^{-1}+N^{-1}\|h_N^*-h_{\eps,N}^{*_1}\|_{W^{1,1}}+KC\frac{\ln(N)}{N}\frac{\tilde \theta}{1-\tilde \theta}\\
&\leq  C_K N^{-1}+N^{-1}KC\frac{\ln(N)}{N}+KC\frac{\ln(N)}{N}\frac{\tilde \theta}{1-\tilde \theta}.
\end{align*}
\end{proof}
This completes the proof of part 2 of Theorem \ref{thm:main1}.
\subsection{Proof of Theorem \ref{thm:seqiter}}\label{sec:uniquedisc}
\begin{lemma}\label{lem:discrete_contraction}
There exists $\eps_4^*\in(0,\eps_1^*)$ such that for $\eps\in [0,\eps_4^*]$, and $N$ large enough, the operator $\tilde\L_{\eps,N}$ has a unique fixed point $h^*_{\eps,N}\in B_K$. 
Moreover, there exists $\tilde{C}_K>0$ and $\gamma\in (0,1)$ such that for all $\eps \in[0,\eps^*_4]$,  for all $h^0\in B_K$, we have
\[
\|{\tilde\L}_{\eps,N}^{n}(h^0)-h_{\eps,N}^*\|_{W^{1,1}}\le C_K \gamma^n;
\]
i.e.
    $$
\|\Pi_N\mathcal{L}_{T_{\eps, h^{n-1}_N}}\circ\cdots\circ \Pi_N\mathcal{L}_{T_{\eps, h^{1}_N}}\circ\Pi_N\mathcal{L}_{T_{\eps, h^{0}_N}}h^{0} - h^*_{\eps,N}\|_{W^{1,1}}\le C_K\gamma^n.
$$
\end{lemma}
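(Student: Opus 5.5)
Existence of a fixed point in $B_K$ is already given by Lemma \ref{lem:discLY_co} (invariance of $B_K$ plus Schauder). Uniqueness and exponential convergence will both follow from one \emph{contraction estimate}: for $\eps$ small and $N\ge N_0$ there are $C>0$ and $\gamma\in(0,1)$ such that for all $h^0,\bar h^0\in B_K$ and all $n$,
\[
\|\tilde\L_{\eps,N}^n h^0-\tilde\L_{\eps,N}^n \bar h^0\|_{W^{1,1}}\le C\gamma^n\|h^0-\bar h^0\|_{W^{1,1}}.
\]
Taking $\bar h^0=h^*_{\eps,N}$ and using $\|h^0-\bar h^0\|_{W^{1,1}}\le 2K$ gives the stated bound with $C_K=2KC$. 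If two fixed points lay in $B_K$, the estimate would force their $W^{1,1}$ distance to be at most $C\gamma^n\cdot 2K\to0$, which gives uniqueness.

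To prove the estimate, write $h^n=\tilde\L_{\eps,N}^nh^0$ and $\bar h^n=\tilde\L_{\eps,N}^n\bar h^0$. By Lemma \ref{lem:discLY_co} these stay in $B_K$. We telescope as in the proof of Lemma \ref{lem:convhN}:
\[
h^{n}-\bar h^{n}=\Pi_N\L_{T_{\eps,\bar h^{n-1}}}(h^{n-1}-\bar h^{n-1})+\Pi_N\big(\L_{T_{\eps,h^{n-1}}}-\L_{T_{\eps,\bar h^{n-1}}}\big)h^{n-1}.
\]
Iterating this identity gives
\[
h^n-\bar h^n=\Pi_N\L_{T_{\eps,\bar h^{n-1}}}\cdots\Pi_N\L_{T_{\eps,\bar h^{0}}}(h^0-\bar h^0)+\sum_{k=0}^{n-1}\Pi_N\L_{T_{\eps,\bar h^{n-1}}}\cdots\Pi_N\L_{T_{\eps,\bar h^{k+1}}}\,\Pi_N\big(\L_{T_{\eps,h^{k}}}-\L_{T_{\eps,\bar h^{k}}}\big)h^{k}.
\]
Every function to which a product of $\Pi_N\L$'s is applied has zero mean. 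Indeed $h^0-\bar h^0\in V^{1,1}$, and $(\L_{T_1}-\L_{T_2})h$ has zero mean for any pair of maps, because transfer operators preserve integrals. So Lemma \ref{lem:contracto} applies with sequence $(\bar h^j)\subset B_K$. Combined with the contraction $\|\Pi_N\|_{W^{1,1}\to W^{1,1}}\le1$ from Lemma \ref{lemkern} and the operator-closeness bound \eqref{eq:closeop} from Lemma \ref{introprop},
\[
\|(\L_{T_{\eps,f}}-\L_{T_{\eps,\bar f}})h\|_{W^{1,1}}\le C\eps\|f-\bar f\|_{W^{1,1}}\|h\|_{W^{2,1}},
\]
with $\|h^k\|_{W^{2,1}}\le K$, this yields the recursive inequality
\[
U_n\le C\tilde\theta^nU_0+CK\eps\sum_{k=0}^{n-1}\tilde\theta^{n-1-k}U_k,\qquad U_n:=\|h^n-\bar h^n\|_{W^{1,1}}.
\]

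This is a discrete Gronwall inequality. We would handle it as in Lemma \ref{lem:convhN}, by comparing with the sequence satisfying equality, which obeys $V_n=(\tilde\theta+CK\eps)V_{n-1}$ up to the initial term. That gives $U_n\le C'(\tilde\theta+CK\eps)^nU_0$. Choosing $\eps_4^*<\eps_1^*$ with $\gamma:=\tilde\theta+CK\eps_4^*<1$ finishes the proof. The threshold $N\ge N_0$ comes from Lemma \ref{lem:contracto}, so it is independent of $\eps$.

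The main obstacle is regularity. The closeness estimate \eqref{eq:closeop} loses one derivative: it needs a $W^{2,1}$ bound on $h^k$ to control a $W^{1,1}$ difference, and a $W^{1,1}$ difference of the states. This is exactly why the iterates must be kept in $B_K$ (a $W^{2,1}$ ball) and why the contraction is measured in $W^{1,1}$. One must also check that Lemma \ref{lem:contracto} really allows an arbitrary sequence in $B_K$, uniformly in $N\ge N_0$, so that the constants $C$ and $\tilde\theta$ above do not depend on $N$ or $\eps$.
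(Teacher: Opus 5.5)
Your proposal is correct and follows essentially the paper's proof. Both arguments rest on the same ingredients:
\begin{itemize}
\item a two-orbit contraction estimate in $W^{1,1}$, obtained by telescoping;
\item Lemma \ref{lem:contracto} applied to the zero-mean differences;
\item \eqref{eq:closeop} together with the $W^{2,1}$ bound $K$ on the iterates;
\item a smallness condition on $\eps$.
\end{itemize}
The only difference is cosmetic. The paper closes the recursion by induction with a pre-chosen $\gamma\in(\tilde\theta,1)$. You instead use the comparison-sequence argument from Lemma \ref{lem:convhN}, which gives the explicit rate $\tilde\theta+CK\eps$.
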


\begin{proof}
Let $h_{\eps,N}^n:= (\Pi_N\circ \tilde \L_\eps)^nh^1$ and $\tilde h_{\eps,N}^n:= (\Pi_N\circ \tilde \L_\eps)^n\tilde h^1$. Let $\tilde\theta$ and $C$ be as in Lemma \ref{lem:contracto}. Choose $\gamma \in (\tilde\theta,1)$ and $C_0 > \max\{1,C\}$. We prove by induction that
$\|h_{\eps,N}^n-\tilde h_{\eps,N}^n\|_{W^{1,1}}\le C_0\gamma^n$. To do this assume 
$$\|h_{\eps,N}^k-\tilde h_{\eps,N}^k\|_{W^{1,1}}\le C_0\gamma^k\|h_{\eps,N}-\tilde h_{\eps,N}\|_{W^{1,1}}$$
for any $k=1,\dots, n-1$ and prove it for $n$. Using Lemma  \ref{lem:contracto}, the first item of Lemma \ref{lemkern}, the property \eqref{eq:closeop} and the induction hypothesis, we have
\begin{equation*}
\begin{split}
   &\|(\Pi_N\circ \tilde \L_\eps)^nh^1-(\Pi_N\circ \tilde \L_\eps)^n\tilde h^1\|_{W^{1,1}}\\
   &\leq \| \left((\Pi_N\circ \L_{T_{\eps, h_{\eps,N}^n}})\dots (\Pi_N\circ \L_{T_{\eps,h^1}})-(\Pi_N\circ \L_{T_{\eps, \tilde h_{\eps,N}^n}})\dots (\Pi_N\circ \L_{T_{\eps, \tilde h^1}})\right)h^1\|_{W^{1,1}}\\
   &+\|(\Pi_N\circ \L_{T_{\eps, \tilde h_{\eps,N}^n}})\dots (\Pi_N\circ \L_{T_{\eps, \tilde h^1}}(h^1-\tilde h^1)\|_{W^{1,1}}\\
    &\le C_0CK\eps\sum_{k=1}^n \tilde\theta^{n-k}\gamma^k \|h_{\eps,N}-\tilde h_{\eps,N}\|_{W^{1,1}}+ C\tilde\theta^n\|h_{\eps,N}-\tilde h_{\eps,N}\|_{W^{1,1}}\\
    &\le C_0CK\gamma^n\eps\sum_{k=1}^n (\tilde\theta/\gamma)^{n-k}\|h_{\eps,N}-\tilde h_{\eps,N}\|_{W^{1,1}}+ C\tilde\theta^n\|h_{\eps,N}-\tilde h_{\eps,N}\|_{W^{1,1}}\\
    &\le C_0\gamma^n\|h_{\eps,N}-\tilde h_{\eps,N}\|_{W^{1,1}}
\end{split}
\end{equation*}
provided $\eps$ is small enough.
\end{proof}
This completes the proof of part 2 of Theorem \ref{thm:seqiter}.
\section{Fixed point approximation by Newton iteration}\label{sec:newtheory}
In this section we develop a version of Newton's method that will be used for our numerical scheme in order to estimate a fixed point $h_N^*$. Newton's method requires the existence of, and an explicit expression for, the Fr\'echet derivative for the discrete operator $\tilde \L_{\eps,N}$.
We note that the existence of a Fr\'echet derivative for the true self-consistent operator $\tilde \L_\eps$ in a neighbourhood of the fixed point has been shown in \cite{CGT25}.
The development of an explicit expression for the derivative of the discretised self-consistent transfer operator is the focus of Section \ref{subsecdiff}, with an explicit formula given in Proposition \ref{prop:deriv}.

In order to clarify the general requirements for Newton's method on Banach spaces, we assume the existence of a Fr\'echet derivative for $\tilde \L_{\eps,N}$ and for brevity denote $h_N^*:=h_{\eps,N}^*$. Let
$\mathcal{N}h:=\Pi_N-\tilde\L_{\eps,N}$, and note that $\mathcal{N}h_N^*=0$;  that is, $h_N^*$ is a zero of the nonlinear operator $\mathcal{N}$. We follow the approach of \cite{hinze2008optimization}.
Recall the definition of the finite dimensional space $F_N$ introduced in \eqref{eq:Fspace}. For an initial $h^0\in F_N$, the Newton update is 
\begin{equation}
    \label{eq:newtonupdate}
h^{n+1}:=h^n-(D_{h^n}\mathcal{N})^{-1}\circ \mathcal{N}h^n,
\end{equation}
where $D_f\mathcal{N}$ denotes the Fr\'echet derivative of $\mathcal N$ at $f$. 

By \eqref{eq:newtonupdate} and the fact that $h^*_N$ is a zero of $\mathcal{N}$ one has 
\begin{align*}
D_{h^n}\mathcal{N}(  
    h^*_N-h^{n+1})&=D_{h^n}\mathcal{N}(h^*_N) -(D_{h^n}\mathcal{N}(h^n-(D_{h^n}\mathcal{N})^{-1}\circ \mathcal{N}h^n))-\mathcal{N}h^*_N\nonumber\\
    &=D_{h^n}\mathcal{N}(h^*_N - h^n) +\mathcal{N}h^n-\mathcal{N}h^*_N.
\end{align*}
Thus, 
\begin{equation*}
    h^*_N-h^{n+1} =(D_{h^n}\mathcal{N})^{-1}\left(D_{h^n}\mathcal{N}(h^*_N-h^n)+\mathcal{N}(h^*_N+e^n)-\mathcal{N}(h^*_N)\right),
\end{equation*}
where $e^n:=h^n-h_N^*$ is the error at step $n$.
Therefore if one has the inequality
\begin{align*}
\|e^{n+1}\|=\|(D_{h^n}\mathcal{N})^{-1}\left(\mathcal{N}(h^*_N+e^n)-\mathcal{N}(h^*_N)- D_{h^n}\mathcal{N}(e^n)\right)\|\le C\|e^n\|^{1+\alpha},
\end{align*}
one achieves order $1+\alpha$ convergence when $h^n$ is sufficiently close to $h_N^*$.
This condition is usually stated as:
\begin{enumerate}
    \item[a)] For some $M<\infty$ one has the following bound 
    \begin{equation}
        \label{eq:resolventnewton}
    \|(D_{h^n}\mathcal{N})^{-1}\|<M
    \end{equation}  along the Newton sequence $h^n$,
    \item[b)] For some $C<\infty$, the Fr\'echet derivative satisfies 
    \begin{equation}
        \label{eq:Frechetcondition}
    \|\mathcal{N}(h^*_N+e^n)-\mathcal{N}(h^*_N)- D_{h^*_N+e^n}\mathcal{N}(e^n)\|\le C\|e^n\|^{1+\alpha}.
    \end{equation}
\end{enumerate}
Our approach will be to demonstrate condition (a) uniformly in a neighbourhood of $h_N^*$.
Condition (b) may be reduced to a condition on the Holder regularity of $e \mapsto D_{h^*_N+e}\mathcal{N}(e)$ :
\begin{align*} 
    \|\mathcal{N}(h^*_N+e^n)-\mathcal{N}(h^*_N)- D_{h^*_N+e^n}\mathcal{N}(e^n)\|& \le \left\|\int_0^1D_{h^*_N+te^n}\mathcal{N}(e^n)\ dt- D_{h^*_N+e^n}\mathcal{N}(e^n)\right\|\nonumber\\
    &\leq \int_0^1\|D_{h^*_N+te^n}\mathcal{N}(e^n)- D_{h^*_N+e^n}\mathcal{N}(e^n)\|\ dt.
    \end{align*}
Thus to obtain condition (b), it is sufficient to ask for 
 \begin{align}\label{eq:Frechetderivreg}
\|D_{h^*_{\eps,N}+te}\tilde{\L}_{\eps,N}(e)- D_{h^*_{\eps,N}}\tilde{\L}_{\eps,N}(e)\|\le C\|e\|^{1+\alpha}, \,\,  \forall e\in F_N, \, 0\le t\le 1.    
 \end{align}
These considerations lead to Theorem \ref{newtonthm} below.

We introduce the  space $V^{1,1}:=\{h\in W^{1,1}, \int h=0\}$ and for $N\in \mathbb{N}$ we fix the constant $C_N^{(2)}>1$ to be such that for any $e\in W^{2,1}\cap F_N$,
\begin{align}\label{eq:equivcon}
\|e\|_{W^{2,1}}\leq C_N^{(2)}\|e\|_{W^{1,1}}.    
\end{align}

\begin{theorem}
  \label{newtonthm}
There is $\eps_7^*>0$ such that for fixed $\eps< \text{min}\{\eps_2^*,\eps_7^*\}$, large enough $N$, and $K> \frac{C_{LY}}{1-\lambda^2}$, we let $ \Lambda_{K,N}>\sup_{f\in B_K\cap F_N}\|(\Pi_N-D_f\tilde{\L}_{\eps,N})^{-1}|_{F_N}\|_{V^{1,1}}$, $\delta_{K,N}:=\sup\{\delta: \forall f,g \in F_N\cap \mathcal{D}_1\cap B_K,  \|f-g\|_{W^{1,1}}\leq \delta \Rightarrow  \|D_f\tilde\L_{\eps,N}-D_g\tilde\L_{\eps,N}\|_{W^{1,1}}\leq \frac{1}{4\Lambda_{K,N}}\}$
and $\beta_{K,N}\leq \min\{1,\delta_{K,N}\}$. 
Then there is $\tilde K >0$ such that the set $ B_{\tilde K,N}':=\{h  :\mathcal D_1\cap F_N\cap B_{\|\|_{W^{1,1}}}(h,\beta_{\tilde K,N})\subset B_{\tilde K}, \|(\Pi_N-\tilde\L_{\eps,N})h\|_{W^{1,1}}< \frac{\beta_{\tilde K, N}}{2\Lambda_{\tilde K,N}}\}$ contains $h_{\eps,N}^*$ and 
the Newton scheme \eqref{eq:newtonupdate} initialised with any  $h^0_{\eps,N}\in B_{\tilde K,N}'\cap B_{\|\|_{W^{1,1}}}(h_{\eps,N}^*,\beta_{\tilde K,N})$  converges to the fixed point $h_{\eps,N}^*$  with order-$2$ convergence in $\|\cdot\|_{W^{1,1}}$: i.e.\ there is $C>0$ and $0<\tilde \gamma<1$, such that for all $n\ge 0$,
   \begin{align}
   \label{newtonrate}
    \|h^n_{\eps,N}-h_{\eps,N}^*\|_{1,1}\le C\tilde \gamma^{2^n}.
    \end{align}
\end{theorem}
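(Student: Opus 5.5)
The plan is to reduce the statement to the abstract local Newton–Kantorovich/Bartle-type argument recalled before the theorem: conditions (a) and (b). The work lies in three checks for the finite-dimensional map $\mathcal{N}=\Pi_N-\tilde\L_{\eps,N}$ on $F_N\cap V^{1,1}$ directions.
\begin{enumerate}
\item Check that $\Lambda_{K,N}<\infty$ (invertibility with a uniform bound).
\item Check that $\delta_{K,N}>0$ (uniform continuity of the derivative).
\item Check that $h_{\eps,N}^*$ lies in $B'_{\tilde K,N}$.
\end{enumerate}
Once these hold, the classical Newton estimate gives quadratic convergence.

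\textbf{Step 1 (resolvent bound).} Using Proposition~\ref{prop:deriv}, I would write $D_f\tilde\L_{\eps,N}e=\Pi_N\L_{T_{\eps,f}}e+\Pi_N R_{\eps,f}e$. Here $R_{\eps,f}e$ is the term obtained by differentiating the map $T_{\eps,f}$ in $f$. It is roughly $-\eps\,\partial_x\big(\L_{T_{\eps,f}}(\cdot)\,\int g(\cdot,y)e(y)dy\big)$ applied to $f$, so that $\|R_{\eps,f}e\|_{W^{1,1}}\le C\eps\|f\|_{W^{2,1}}\|e\|_{L^1}\le CK\eps\|e\|_{W^{1,1}}$ for $f\in B_K$. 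On zero-mean directions, Lemma~\ref{lem:contracto} shows that the products of the operators $\Pi_N\L_{T_{\eps,f}}$ contract exponentially in $W^{1,1}$. Hence $(I-\Pi_N\L_{T_{\eps,f}})^{-1}$ exists on $V^{1,1}\cap F_N$ via a Neumann series, with norm at most $C/(1-\tilde\theta)$, uniformly in $N\ge N_0$ and $f\in B_K$. A second Neumann perturbation then handles the $\eps R$ term for $\eps<\eps_7^*$, where $\eps_7^*$ is chosen so that $CK\eps_7^*\,C/(1-\tilde\theta)<1/2$. Since $\Pi_N$ is the identity up to Fejér weights on $F_N$ (it is invertible there), this gives a finite $\Lambda_{K,N}$.

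\textbf{Step 2 (continuity).} Next I would show that $f\mapsto D_f\tilde\L_{\eps,N}$ is Lipschitz from $(F_N\cap B_K,\|\cdot\|_{W^{1,1}})$ to $\mathcal{B}(W^{1,1})$. The ingredients are the explicit formula, the $C^5$ smoothness of $g$ and $T$, and the estimate \eqref{eq:closeop} (Lemma~\ref{introprop}) $\|\L_{T_{\eps,f}}-\L_{T_{\eps,\tilde f}}\|_{W^{2,1}\to W^{1,1}}\le C\eps\|f-\tilde f\|$. On $F_N$ the loss of a derivative is repaired by \eqref{eq:equivcon}, at a cost of $C_N^{(2)}$. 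This yields the Lipschitz bound $\|D_f-D_{\tilde f}\|\le L_N\|f-\tilde f\|_{W^{1,1}}$, hence $\delta_{K,N}\ge 1/(4\Lambda_{K,N}L_N)>0$, and it also gives \eqref{eq:Frechetderivreg} with $\alpha=1$.

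\textbf{Step 3 (the Newton argument).} Take $\tilde K$ slightly larger than $K$ so that the $\beta$-ball around $h^*_{\eps,N}\in B_K$ (Lemma~\ref{lem:discLY_co}) stays in $B_{\tilde K}$, again using \eqref{eq:equivcon}; since $\mathcal{N}h^*_{\eps,N}=0$, we get $h^*_{\eps,N}\in B'_{\tilde K,N}$. For $h^n$ in the $\beta$-ball, the identity displayed before the theorem gives
\[
\|e^{n+1}\|\le \Lambda\int_0^1\|D_{h^*+te^n}-D_{h^n}\|\,dt\,\|e^n\|\le \Lambda L_N\|e^n\|^2 .
\]
Choosing $\beta$ so that $\Lambda\sup\|D_{h^*+te}-D_{h^*+e}\|\le 1/4$ keeps every iterate in the ball, because $\|e^{n+1}\|\le\|e^n\|/4$. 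Iterating then gives $\|e^n\|\le (\Lambda L_N)^{-1}(\Lambda L_N\|e^0\|)^{2^n}$, which is \eqref{newtonrate} with $\tilde\gamma=\Lambda L_N\beta<1$, after possibly shrinking $\beta$. Mass preservation of the iterates, so that the errors stay in $V^{1,1}$, follows because $D\tilde\L$ maps zero-mean functions to zero-mean functions.

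I expect Step 1 to be the main obstacle. It requires a uniform bound on $(\Pi_N-D_f\tilde\L_{\eps,N})^{-1}$, and the derivative of the coupling term costs a derivative. Controlling this uniformly in $N$ is where the smallness $\eps<\eps_7^*$ and the $B_K$ bound on $\|f\|_{W^{2,1}}$ must absorb the loss.
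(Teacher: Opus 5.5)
Your proposal is correct in outline, but it reaches the resolvent bound by a different route from the paper.

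\textbf{How the paper gets condition (a).} Lemma~\ref{lem:3.3} uses a Keller--Liverani spectral gap for $\Pi_N\L_{T_{\eps,f}}$ to exclude eigenvalues of modulus $\ge 1$ of $D_f\tilde\L_{\eps,N}$ on zero-mean directions. Proposition~\ref{prop:4.4} then combines continuity of $f\mapsto (I-D_f\tilde\L_{\eps,N})^{-1}$ with compactness of $B_K\cap F_N$ to extract an $N$-dependent bound $C_{7,N}$. The paper then cites Hinze et al.\ (Theorem 2.9) for local order-2 convergence and Bartle's theorem for the basin.

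\textbf{How you get it.} You build $(I-\Pi_N\L_{T_{\eps,f}})^{-1}$ on $F_N\cap V^{1,1}$ directly as a Neumann series from Lemma~\ref{lem:contracto} with $f_j\equiv f$. The coupling term is zero-mean preserving and of size $O(\eps K)\|e\|_{L^1}$ by Lemma~\ref{lem:uniform_bound_der}, and a second Neumann series absorbs it. This is more elementary and more quantitative. It needs no eigenvalue exclusion, no continuity of the inverse and no compactness. It also gives a bound uniform in $f$ and in $N\ge N_0$; the paper uses the same estimate in \eqref{contrinequality}, but only to rule out eigenvalues.

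\textbf{Steps 2 and 3.} Your Step 2 is Lemma~\ref{lemcontinuityderiv}. Handling the $\Pi_N\L_{T_{\eps,f}}e$ term via \eqref{eq:closeop} and \eqref{eq:equivcon} is cleaner than the paper's Taylor-expansion argument. Your Step 3 is the standard self-contained Newton--Kantorovich estimate in place of the citations. It shows the residual condition in $B'_{\tilde K,N}$ is not needed once $h^0$ lies in the $\beta$-ball about $h^*_{\eps,N}$. What Bartle's formulation buys is a basin certified by the computable residual $\|\mathcal N h^0\|$ rather than the unknown distance to $h^*_{\eps,N}$.

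\textbf{Three corrections.}

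First, drop the claim that $\Pi_N$ is invertible on $F_N$ and hence gives a finite $\Lambda_{K,N}$; it is false.
\begin{itemize}
\item Since $\hat K_N(N)=0$, one has $\Pi_N e_N=0$ although $e_N\in F_N$.
\item $\Pi_N-D_f\tilde\L_{\eps,N}=\Pi_N\bigl(I-\L_{T_{\eps,f}}-\mathcal J(f)\bigr)$ has range in $\mathrm{span}\{e_k:|k|\le N-1\}$, so it is not invertible on $F_N\cap V^{1,1}$.
\item Even if $\Pi_N$ were invertible, that would not transfer a bound on $(I-D_f\tilde\L_{\eps,N})^{-1}$ to $(\Pi_N-D_f\tilde\L_{\eps,N})^{-1}$.
\item Read literally, $\Pi_N-\tilde\L_{\eps,N}$ does not even vanish at $h^*_{\eps,N}$, since $\Pi_N h^*_{\eps,N}\neq h^*_{\eps,N}$.
\end{itemize}
The coherent reading, which Proposition~\ref{prop:4.4} and Section~\ref{sec:num} actually use, is $\mathcal N=I-\tilde\L_{\eps,N}$. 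Under it your Neumann bound is $\Lambda_{K,N}$ directly.

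Second, the quadratic rate does not require shrinking $\beta$ to force $\Lambda L_N\beta<1$. Shrinking would prove the statement only on a smaller ball than claimed. Your own bound $\|e^{n+1}\|\le \|e^n\|/4$ gives $\Lambda L_N\|e^{n_0}\|\le \frac12$ after $n_0$ steps, with $n_0$ depending only on $\Lambda L_N\beta$. Adjusting $C$ and $\tilde\gamma$ then yields \eqref{newtonrate} for all $n\ge 0$; this is exactly the paper's induction from $n_0'$.

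Third, Step 1 proves the resolvent bound on $B_K$, but Step 3 uses it on $B_{\tilde K}$, so $\eps_7^*$ must be chosen with $\tilde K$ in place of $K$. Since the statement fixes $\eps_7^*$ before $N$, you need $\tilde K-K$ bounded independently of $N$. For example, $\tilde K=K+1$ works if $\beta C_N^{(2)}\le 1$, and this is the one place where restricting $\beta$ genuinely helps. The paper's choice $\tilde K\ge K+\delta_{K,N}C_N^{(2)}$ depends on $N$, while its $\eps_7^*$ depends on $K$ through Lemma~\ref{lem:uniform_bound_der}, and it leaves this point unaddressed.
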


\begin{proof}

{We first prove convergence of Newton's method for initialisations sufficiently close to the fixed point, and provide the rate in \eqref{newtonrate}. After that we specify more precisely the neighbourhood size to guarantee convergence.
The proof of the order-2 rate} of convergence relies on a series of results in Section \ref{sec:newtheory}. Proposition \ref{prop:deriv} shows that $f\in F_N\mapsto  \tilde \L_{\eps,N}f \in F_N$ admits a Fr\'echet derivative {in $W^{1,1}$}.
The {local} convergence of Newton then relies on conditions a) and b) stated above and applied with the norm $\|\cdot\|_{W^{1,1}}$. Condition a), that is the boundedness of the resolvent of the Fr\'echet derivative in equation \eqref{eq:resolventnewton}, is a consequence of Proposition \ref{prop:4.4}. As for condition b), as stated above, it is sufficient to demonstrate \eqref{eq:Frechetderivreg} which is a consequence, when $\alpha=1$, of Lemma \ref{lemcontinuityderiv}.
These conditions a) and b) then imply, according to {Part 3 of} Theorem 2.9 \cite{hinze2008optimization}, the convergence of the Newton scheme {at a rate of order $1+\alpha$ for initial conditions sufficiently close to the fixed point $h^*_{\eps,N}$}.
In particular, there is $C_{0,N}>0$ such that, for $e^n:=h^n_{\eps,N}-h_{\eps,N}^*$, there is $n_0\in \mathbb{N}$ such that for any $n\geq n_0$,
\begin{align*}
\|e^{n+1}\|_{W^{1,1}}\le C_{0,N}\|e^n\|_{W^{1,1}}^{2}.
\end{align*}

Assume {for the beginning of an induction argument, starting at $n=n_0'$, that} there is $n_0'\ge n_0$  so that $ \gamma_0:=C_{0,N}\|e^{n_0'}\|_{W^{1,1}}<1$. 
{Then defining $\tilde \gamma:=\gamma_0^{2^{-n_0'}}$ one has the desired inequality at $n_0'$, namely} $\|e^{n_0'}\|_{W^{1,1}}\leq C_{0,N}^{-1}\tilde \gamma^{2^{n_0'}}$.
{We now check the induction step.} 
Assume $\|e^{n}\|_{W^{1,1}}\le C_{0,N}^{-1}\tilde \gamma^{2^n}$ for some $n\geq n_0'$
\begin{align*}
    \|e^{n+1}\|_{W^{1,1}}&\leq C_{0,N}\|e^n\|_{W^{1,1}}^{2}\\
    &\le  C_{0,N}\left(C_{0,N}^{-1}\tilde \gamma^{2^n}\right)^2\\
     &\le  C_{0,N}^{-1}\tilde \gamma^{2^{n+1}}.
\end{align*}
This proves by induction that the Newton method converges in a {sufficiently small} neighbourhood of $h_{\eps,N}^*$ and the rate of convergence is of order $2$.
{This may be equivalently phrased as:} there is $C>0$ and $0<\tilde \gamma<1$ such that for any $e^0$ such that $C_{0,N}\|e^{0}\|_{W^{1,1}}<1$,
\begin{align}\label{eq:speedconv}
    \|e^n\|_{W^{1,1}}\le C\tilde \gamma^{2^n}.
\end{align}
{To construct the {neighbourhood about the fixed point} where the Newton scheme converges recall that for any $K> \frac{C_{LY}}{1-\lambda^2}$, $B_K$ is invariant under $\tilde\L_{\eps,N}$ and $h_{\eps,N}^*\in B_K$.

Furthermore by construction, $\beta_{K,N}$ satisfies the same criterion as the constant $\beta$ from the proof of \citep[Lemma 2]{bartle55}, that is for any $f_0 \in B_{K}$ such that $\mathcal D_1\cap F_N\cap B_{\|\cdot\|_{W^{1,1}}}(f_0,\beta_{K,N})\subset B_{K}$,
\begin{enumerate}
    \item  $\|(\Pi_N-D_f\tilde{\L}_{\eps,N})^{-1}|_{F_N}\|_{V^{1,1}}< \Lambda_{K,N}$ for any $f\in F_N\cap B_{\|\cdot\|_{W^{1,1}}}(f_0,\beta_{K,N})$.
    \item $\forall f,g\in \mathcal D_1\cap F_N\cap B_{\|\cdot\|_{W^{1,1}}}(f_0,\beta_{K,N}),\, \|f-g\|_{W^{1,1}}\leq \beta_{K,N} \Rightarrow  \|D_f\tilde\L_{\eps,N}-D_g\tilde\L_{\eps,N}\|_{W^{1,1}}\leq \frac{1}{4\Lambda_{K,N}}$.
\end{enumerate}
Condition 1) and 2) are both satisfied as a result of the respective definitions of $\Lambda_{K,N}$ and $\delta_{K,N}$ and the fact that $\mathcal D_1\cap F_N\cap B_{\|\cdot\|_{W^{1,1}}}(f_0,\beta_{K,N})\subset B_{K}$.

Thus in the notation of \citep[Theorem]{bartle55}, setting the map $f$ to be $(\Pi_N-\tilde\L_{\eps,N})$ and the constant $\beta$ to be $\beta_{K,N}$, the convergence holds for any
$h^0\in B_{K,N}'$.
To guarantee that the set $B_{K,N}'$ contains $h_{\eps,N}^*$, 
 one can replace the previously fixed $K$ and $\beta$ with some $\tilde K\geq K+\delta_{K,N}C_N^{(2)}$ and $\beta_{\tilde K,N}=\min(1,\delta_{\tilde K,N})$. Since one can choose $\Lambda_{K,N}\leq \Lambda_{\tilde K,N}$, one has $\delta_{\tilde K,N}\leq\delta_{K,N}$, and thus $\beta_{\tilde K,N}\leq \beta_{K,N}$.  We show that $B_{\tilde K,N}'$ contains $h_{\eps,N}^*$.
Let $f\in \mathcal D_1\cap F_N\cap B_{\|\|_{W^{1,1}}}(h^*_{\eps,N},\beta_{\tilde K,N})$;  then $\| f\|_{W^{2,1}}\leq \|h^*_{\eps,N}\|_{W^{1,1}}+\beta_{\tilde K,N}C_N^{(2)}\le K+\beta_{\tilde K,N}C_N^{(2)}\leq \tilde K$, and therefore $f\in B_{\tilde K}$ and $ \mathcal D_1\cap F_N\cap B_{\|\|_{W^{1,1}}}(h^*_{\eps,N},\beta_{\tilde K,N})\subset B_{\tilde K}$.
As for the second condition, notice that the set $B_K\cap F_N\cap \{h: \|(\Pi_N-\tilde\L_{\eps,N})h\|_{W^{1,1}}< \frac{\beta_{\tilde K, N}}{2\Lambda_{\tilde K,N}} \}$ contains $h_{\eps,N}^*$ since $(\Pi_N-\tilde\L_{\eps,N})h_{\eps,N}^*=0$.} 
Thus $h_{\eps,N}^* \in B_{K,N}'$ .

Finally, according to the main theorem in \cite{bartle55}, the Newton scheme 
converges to $h_{\eps,N}^*$ as soon as $h^0\in B_{\tilde K,N}'\cap B_{\|\|_{W^{1,1}}}(h_{\eps,N}^*,\beta_{K,N})$.  The main theorem in \cite{bartle55} states convergence with exponential (order-1) speed. We proved above (see Equation \eqref{eq:speedconv}) that this convergence is ultimately order-$2$ convergence.
\end{proof}
\begin{remark}
\begin{itemize}
\,
    \item  For $\tilde K$ as in the statement of the Theorem above, the fixed point $h_{\eps,N}^*\in B_{\tilde K,N}'$ : Indeed, $\|(\Pi_N-\tilde\L_{\eps,N})h_{\eps,N}^*\|_{W^{1,1}}=0\leq \frac{\beta_{\tilde K, N}}{4\Lambda_{\tilde K,N}}$ and since $\|h_{\eps,N}^*\|_{W^{2,1}}\leq K < \tilde K -\beta_{\tilde K,N}C_N^{(2)}$ then $\mathcal D_1\cap F_N\cap B_{\|\|_{W^{1,1}}}(h_{\eps,N}^*,\beta_{\tilde K,N})\subset B_{\tilde K}$  and thus $h_{\eps,N}^*\in B_{\tilde K,N}'$.
    \item We did not assume uniqueness of a fixed point of $\tilde\L_{\eps,N}$. According to \citep[Theorem]{bartle55} the Newton scheme converges on $B_{K,N}'$ to some fixed point of $\tilde \L_{\eps,N}$, denoted $h_{\eps,N}^*$. If additionally, one initialises in $B_{K,N}'\cap B_{\|\|_{W^{1,1}}}(h_{\eps,N}^*,\beta_{K,N})$ by \citep[Theorem]{bartle55} the Newton scheme is guaranteed to converge to this particular $h_{\eps,N}^*$. 
    \item When $\eps\le \eps^*_4$, $h_{\eps,N}^*$ is unique (see Theorem \ref{thm:seqiter}), then 
 according to the Theorem in \cite{bartle55}, $B_{ K,N}'\subset B_{\|\cdot\|_{W^{1,1}}}(h_{\eps,N}^*,\beta_{ K,N})\cap \mathcal D_1\cap F_N$, which is consistent with previous point.
 \item According to Lemma \ref{lemcontinuityderiv}, $\delta_{K,N}\geq \frac{1}{4\Lambda_{K,N}\mathcal C_N}$ where $\mathcal C_N$ is the constant introduced in Lemma \ref{lemcontinuityderiv}. This leads to the following {lower bound} for the choice of $\beta_{K,N}$: $\frac{1}{4\Lambda_{K,N}\mathcal C_N}\leq \beta_{K,N}\leq \min\{1,\delta_{K,N}\}$.  
\end{itemize}

\end{remark}

In the following sections, we prove the existence of the Fr\'echet derivative of $\tilde \L_{\eps,N}$ and verify Conditions a) and b) with $\alpha=1$ for sufficiently small $\eps>0$ and sufficiently large $N$.

\subsection{Fr\'echet differentiability}\label{subsecdiff}
Recall the definition of the coupling map $I_f$ introduced in \eqref{eq:couplingmap}.
We introduce a few short technical lemmas before proving Fr\'echet derivative.
Using Lemma \ref{lem:product}, one can deduce the two following lemmas.
\begin{lemma}\label{lem:boudifprimeinv}
For $0\le \eps< \frac{1}{\|g\|_{C^1}}$, $f\in \mathcal D_1$, the map $x\in \mathbb{S}^1\mapsto (I_f')^{-1}(x):=\frac 1 {1+\eps\int\partial_1g(x,y)f(y)dy}$ belongs to $W^{4,1}$ and for any $k\in \{0,\dots,4\}$ (recall we assume $W^{0,1}:=L^1$), $I_f'\in W^{4,1}$ and there is a constant $R_k>0$, independent of $\eps$ and $f$, such that
\begin{align*}
    \left\| \frac{1}{I_f'} \right\|_{W^{k,1}}\leq R_k \frac{1}{(1-\eps\|g\|_{C^1})^{k+1}},\ \mbox{for $0\le k\le 4$ and all $f\in \mathcal D_1$.}
\end{align*}
\end{lemma}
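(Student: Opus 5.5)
We set $\phi_f(x):=\eps\int\partial_1 g(x,y)f(y)\,dy$, so that $I_f'=1+\phi_f$. The plan is to control $\phi_f$ uniformly in $f\in\mathcal D_1$, and then to handle the reciprocal $1/(1+\phi_f)$ by the chain rule (Faà di Bruno), with Lemma \ref{lem:product} used to keep products inside $W^{k,1}$.

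\textbf{Step 1: bounds on $\phi_f$.} Since $g\in C^5$ and $f\ge0$ with $\int f=1$, we may differentiate under the integral sign up to four times in $x$. This gives, for $0\le j\le 4$,
\[
|\phi_f^{(j)}(x)|\le \eps\int|\partial_1^{j+1}g(x,y)|f(y)\,dy\le \eps\|g\|_{C^{5}}.
\]
So $\phi_f\in C^4\subset W^{4,1}$ with bounds independent of $f$. Hence $I_f'=1+\phi_f\in W^{4,1}$, and its norm is bounded independently of $f$ and of $\eps\le\eps_0^*$.

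In the case $j=0$ we get the sharper pointwise bound $|\phi_f(x)|\le \eps\|g\|_{C^1}<1$. Therefore $I_f'(x)\ge 1-\eps\|g\|_{C^1}>0$, and
\[
\|1/I_f'\|_\infty\le (1-\eps\|g\|_{C^1})^{-1}.
\]
This already settles the case $k=0$, because the $L^1$ norm on $\mathbb S^1$ is bounded by the sup norm.

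\textbf{Step 2: derivatives of the reciprocal.} For $1\le k\le4$ we apply Faà di Bruno to the composition $u\mapsto 1/u$ with $u=1+\phi_f$. The $k$-th derivative is a finite sum of terms of the form
\[
c\,(1+\phi_f)^{-(m+1)}\prod_{l=1}^m\phi_f^{(j_l)},\qquad 1\le m\le k,\quad \textstyle\sum_l j_l=k.
\]
For each term:
\begin{itemize}
\item the prefactor is bounded by $(1-\eps\|g\|_{C^1})^{-(m+1)}$, which is at most $(1-\eps\|g\|_{C^1})^{-(k+1)}$ since the base is $\ge1$;
\item the product of derivatives is bounded in sup norm by $(\eps\|g\|_{C^5})^m$, a constant independent of $f$.
\end{itemize}
Summing the finitely many terms and integrating over $\mathbb S^1$ gives
\[
\|(1/I_f')^{(k)}\|_1\le R_k'(1-\eps\|g\|_{C^1})^{-(k+1)}.
\]
Adding the lower-order derivatives, each with an exponent $\le k+1$, yields the bound on $\|1/I_f'\|_{W^{k,1}}$ with a constant $R_k$ depending only on $g$, $k$ and $\eps_0^*$.

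Alternatively, one can argue by induction using $(1/u)'=-u'/u^2$ together with Lemma \ref{lem:product}. At each differentiation this gains one factor of $1/u$, which is exactly where the exponent $k+1$ comes from.

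\textbf{Main obstacle.} The only delicate point is tracking the exponent of $(1-\eps\|g\|_{C^1})^{-1}$ so that it is at most $k+1$. At the same time, the other constants must stay independent of $\eps$ and $f$. This requires using that all bounds on $\phi_f$ come from sup-norm bounds on $g$ and the normalisation $\int f=1$, and never from any regularity of $f$.
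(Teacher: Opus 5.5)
Your proof is correct, but it takes a different route from the paper's.

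\textbf{The paper's route.} The paper inducts on $k$. It writes $(1/I_f')^{(n+1)}$ as the $n$-th derivative of $(I_f''/I_f')\cdot(1/I_f')$ (up to sign) and expands by Leibniz. It then passes to sup norms through $W^{1,1}\hookrightarrow L^\infty$ and bounds the resulting products in $W^{j,1}$ with Lemma \ref{lem:product}, feeding in the inductive hypothesis on $\|1/I_f'\|_{W^{j,1}}$.

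\textbf{Your route.} You write down the closed-form Fa\`a di Bruno expansion. You combine it with the uniform pointwise bounds $|\phi_f^{(j)}|\le\eps\|g\|_{C^5}$, $j\le 4$, which come only from $\int f=1$. Everything is then a sup-norm estimate, and $\|\cdot\|_1\le\|\cdot\|_\infty$ on $\mathbb S^1$ finishes it.

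\textbf{What each buys.} Your approach gives a transparent exponent count: every term carries $(1+\phi_f)^{-(m+1)}$ with $m\le k$, which yields exactly the stated power $k+1$. The Sobolev-product bookkeeping does not deliver this. In the paper's inductive step the hypothesis is inserted with exponents one too small in two places. There $\|1/I_f'\|_{W^{n-k+1,1}}$ is bounded with power $n-k+1$ instead of $n-k+2$, and $\|1/I_f'\|_{W^{1,1}}$ with power $1$ instead of $2$. Carried out consistently, that scheme only gives power $2k$. So your pointwise argument is what actually establishes the bound as stated. The paper's scheme is more structural, since it uses only the algebra property of $W^{k,1}$. That would matter if the coupling term were merely Sobolev-regular, but here $\phi_f\in C^4$, so nothing is lost on your side.

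\textbf{Two small corrections.}
\begin{enumerate}
\item Your closing remark says the induction via $(1/u)'=-u'/u^2$ and Lemma \ref{lem:product} ``gains one factor of $1/u$'' per step and so gives exactly $k+1$. That is only true if each factor $1/u$ is kept in sup norm, which is just your main argument again. Routed through $W^{j,1}$ norms of $1/u$, it loses powers as described above.
\item The lemma is stated for all $0\le\eps<1/\|g\|_{C^1}$, not just $\eps\le\eps_0^*$. Bound $\eps\|g\|_{C^5}<\|g\|_{C^5}/\|g\|_{C^1}$ instead of using $\eps_0^*$, so that $R_k$ depends only on $g$ and $k$.
\end{enumerate}
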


\begin{proof}
We proceed by induction. Recall that $g\in C^k$ and by construction of $I_f$, the following estimates hold for any $k\in \{1,\dots,4\}$ : 
$$
1+\eps\|g\|_{C^k}\geq \|I_f^{(k)}\|_\infty \,\text{ and } |I_f'(x)|\geq 1-\eps \|g\|_{C^1} \,, \forall x\in \mathbb S^1.
$$ 
Consequently, we can initialize the induction at $k=0$ with $\|\frac{1}{I_f'}\|_{L^1}\leq \|\frac{1}{I_f'}\|_{\infty}\leq  \frac{1}{1-\eps \|g\|_{C^1}}$. We obtain 
\begin{align*}
    \left\|\frac{1}{I_f'}\right\|_{L^1}\leq \left\|\frac{1}{I_f'}\right\|_{\infty} \leq \frac{R_0}{1-\eps \|g\|_{C^1}},\\  
\end{align*}
and so we may take $R_0=1$.
To proceed by induction we assume that there is $n\leq 3$ such that for any $k\le n$,  
$$\left\| \frac{1}{I_f'} \right\|_{W^{k,1}}\leq R_k \frac{1}{(1-\eps\|g\|_{C^1})^{k+1}}.$$ 
Then, using Lemma \ref{lem:product}, there is $K_{n+1}>0$ such that
\begin{align*}
    \|\left(\frac{1}{I_f'}\right)^{(n+1)}\|_{L^1}&= \left\|\left(\left(\frac{1}{I_f'}\right)'\right)^{(n)}\right\|_{L^1}\\
    &= \left\|\left(\frac{I_f''}{(I_f')^2}\right)^{(n)}\right\|_{L^1}\\
    &\leq \sum_{k=0}^{n} {n \choose k}\left\|\left(\frac{I_f''}{I_f'}\right)^{(n-k)}\left(\frac{1}{(I_f')}\right)^{(k)} \right\|_{L^1}\\
        &\leq \sum_{k=1}^{n}{n \choose k}\left\|\left(\frac{I_f''}{I_f'}\right)^{(n-k)}\right\|_{W^{1,1}}\left\|\left(\frac{1}{(I_f')}\right)^{(k)} \right\|_{L^1}\\
   & +\left\|\left(\frac{I_f''}{I_f'}\right)^{(n)}\right\|_{L^1}\left\|\frac{1}{(I_f')} \right\|_{W^{1,1}}\\
   &\leq \sum_{k=1}^{n}2^{n-k+1}{n \choose k}\left\|I_f''\right\|_{W^{n-k+1,1}}\left\|\frac{1}{I_f'}\right\|_{W^{n-k+1,1}}\left\|\frac{1}{(I_f')} \right\|_{W^{k,1}}\\
   & +2^n\left\|I_f''\right\|_{W^{n,1}}\left\|\frac{1}{I_f'}\right\|_{W^{n,1}}\left\|\frac{1}{(I_f')} \right\|_{W^{1,1}}\\
    &\leq \sum_{k=1}^{n}2^{n-k+1}{n \choose k}\|I_f''\|_{W^{n-k+1,1}}\frac{R_{n-k+1}}{(1-\eps \|g\|_{C^1})^{n-k+1}}\frac{R_{k}}{(1-\eps \|g\|_{C^1})^{k+1}}\\
     & +2^n\|I_f''\|_{W^{n,1}}\frac{R_{n}}{(1-\eps \|g\|_{C^1})^{n+1}}\frac{R_{1}}{(1-\eps \|g\|_{C^1})}\\
    &\leq \frac{K_{n+1}}{(1-\eps \|g\|_{C^1})^{n+2}},
\end{align*}
Where we used the fact that $I_f''=\eps\int g''(x,y)f(y)dy$ and thus $\|I_f''\|_{W^{n,1}} \leq \eps \|g\|_{C^{n+2}}$.
Consequently, using the induction property along with the above equation, there is $R_{n+1}>0$ such that 
\begin{align*}
    \left\|\frac{1}{I_f'}\right\|_{W^{n+1,1}}\leq \left\|\left(\frac{1}{I_f'}\right)^{(n+1)}\right\|_{L^1}+ \left\|\frac{1}{I_f'}\right\|_{W^{n,1}}\leq \frac{R_{n+1}}{(1-\eps \|g\|_{C^1})^{n+2}},
\end{align*}
which completes the proof of the lemma.
\end{proof}
\begin{lemma}\label{lem:LIfcon}
    For $0\le\eps< \frac{1}{\|g\|_{C^1}}$, $f\in \mathcal D_1$, the linear map $\L_{I_f}\in L(W^{1,1},W^{1,1})$ is continuous. Moreover, there is $C_0>0$, independent of $\eps$ and $f\in\mathcal D_1$ such that for $h\in W^{1,1}$,
\begin{align*}
    \|\L_{I_f}h\|_{W^{1,1}}\leq C_0\|h\|_{W^{1,1}}.
\end{align*}

\end{lemma}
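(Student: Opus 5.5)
We prove the bound by writing $\L_{I_f}$ explicitly and estimating $\L_{I_f}h$ and its derivative in $L^1$ separately. For $0\le \eps<1/\|g\|_{C^1}$ we have $I_f'(x)=1+\eps\int\partial_1 g(x,y)f(y)\,dy\ge 1-\eps\|g\|_{C^1}>0$, because $f\in\mathcal D_1$ has $\int f=1$ and $f\ge 0$. Moreover $I_f(x+1)=I_f(x)+1$, so $I_f$ is an orientation-preserving degree-one circle map with positive derivative. Hence $I_f$ is a $C^4$ diffeomorphism of $\mathbb S^1$, and the transfer operator is
\[
\L_{I_f}h=\frac{h}{I_f'}\circ I_f^{-1}.
\]

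For the $L^1$ part, the change of variables $y=I_f(x)$ gives $\|\L_{I_f}h\|_{L^1}=\|h\|_{L^1}$, since transfer operators are $L^1$ contractions and here the inequality is in fact an equality for positive $h$.

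For the derivative we use the chain rule:
\[
(\L_{I_f}h)'=\Big(\Big(\frac{h}{I_f'}\Big)'\cdot\frac{1}{I_f'}\Big)\circ I_f^{-1}
=\L_{I_f}\Big(\Big(\frac{h}{I_f'}\Big)'\Big).
\]
Using the $L^1$ isometry again,
\[
\|(\L_{I_f}h)'\|_{L^1}=\Big\|\frac{h'}{I_f'}-\frac{h\,I_f''}{(I_f')^2}\Big\|_{L^1}.
\]
We bound each term:
\begin{itemize}
\item The first term is at most $\|1/I_f'\|_\infty\|h'\|_{L^1}\le (1-\eps\|g\|_{C^1})^{-1}\|h'\|_{L^1}$.
\item For the second term, $|I_f''|\le \eps\|g\|_{C^2}$, so it is at most $\eps\|g\|_{C^2}(1-\eps\|g\|_{C^1})^{-2}\|h\|_{L^1}$.
\end{itemize}
Alternatively, one can cite Lemma \ref{lem:boudifprimeinv} together with the product estimate of Lemma \ref{lem:product} ($\|uv\|_{W^{1,1}}\lesssim\|u\|_{W^{1,1}}\|v\|_{W^{1,1}}$, via $W^{1,1}\hookrightarrow L^\infty$).

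Summing the two parts gives $\|\L_{I_f}h\|_{W^{1,1}}\le C_0\|h\|_{W^{1,1}}$, with $C_0$ depending only on $\|g\|_{C^2}$ and $\eps$. Boundedness of the linear map is exactly continuity.

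The only delicate point is uniformity of $C_0$ in $\eps$. As $\eps\uparrow 1/\|g\|_{C^1}$ the factor $(1-\eps\|g\|_{C^1})^{-2}$ blows up, so the constant is uniform only for $\eps$ in a compact subinterval. Within the standing assumption $\eps\le\eps_0^*$, where $\eps_0^*$ is small and in particular we may take $\eps_0^*\|g\|_{C^1}\le 1/2$, we get the uniform choice $C_0=2+4\eps_0^*\|g\|_{C^2}$, independent of $f$ and $\eps$. This is how we read the statement's constant, and we would make that restriction explicit. A secondary technical point is justifying the chain rule for $h\in W^{1,1}$ (absolutely continuous) composed with the $C^4$ diffeomorphism $I_f^{-1}$, which is standard.
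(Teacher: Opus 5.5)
Your proof is correct and follows the paper's route. The paper also uses the commutation identity $(\L_{I_f}h)'=\L_{I_f}\big((h/I_f')'\big)$ together with the $L^1$ contraction property of $\L_{I_f}$, and then bounds $\|(h/I_f')'\|_{L^1}$ via Lemma \ref{lem:product} and Lemma \ref{lem:boudifprimeinv}; your direct expansion using $\|1/I_f'\|_\infty$ and $\|I_f''\|_\infty$ is an equivalent, slightly more explicit version of that step.

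Your caveat about uniformity is well taken. The paper's own constant $2R_1(1-\eps\|g\|_{C^1})^{-2}$ degenerates as $\eps\uparrow 1/\|g\|_{C^1}$, so the claimed $\eps$-independence genuinely requires restricting $\eps$ to a compact subinterval, for instance $\eps\|g\|_{C^1}\le 1/2$ as you propose.
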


\begin{proof}

    Indeed notice that for $h\in W^{1,1}$,
    
    \begin{align}
    \left(\L_{I_f}(h)\right)'&=\left(\frac{h}{I_f'}\circ I_f^{-1}\right)'\nonumber\\
    &=\left(\frac{h}{I_f'}\right)'\circ I_f^{-1}\cdot \frac{1}{I_f'}\circ I_f^{-1}\nonumber\\
    &=\L_{I_f}\left(\left(\frac{h}{I_f'}\right)'\right).\label{eq:Lifh}
\end{align}

  Now recall that for $h\in W^{1,1}$, $\|\L_{I_f}h\|_{L^1}\leq\|h\|_{L^1}$. Notice that  using \eqref{eq:Lifh}, Lemma \ref{lem:boudifprimeinv} and Lemma \ref{lem:product}
\begin{align*}
    \|\left(\L_{I_f}h\right)'\|_{L^1}&= \left\|\L_{I_f}\left(\left(\frac{h}{I_f'}\right)'\right)\right\|_{L^1}\\
    &=\left\|\left(\frac{h}{I_f'}\right)'\right\|_{L^1}\\
        &\leq 2\|h\|_{W^{1,1}}\|\frac{1}{I_f'}\|_{W^{1,1}}\\
         &\leq 2\|h\|_{W^{1,1}}\frac{R_1}{(1-\eps \|g\|_{C^1})^2}.
\end{align*}
Thus there is $C_0>0$ such that 
\begin{align*}
    \|\L_{I_f}h\|_{W^{1,1}}\leq C_0\|h\|_{W^{1,1}}.
\end{align*}
\end{proof}
We now prove the main proposition of this section. In the rest of section \ref{sec:newtheory} we introduce the notation $G: h\in W^{k,1}\mapsto C^5$ given by the convolution with the kernel $g\in C^5$ introduced in Section \ref{sec:mainresult},
\begin{align*}
    G(h):=\int_{\mathbb S^1} g(x,y)h(y)dy.
\end{align*}

\begin{proposition}\label{prop:deriv}
For $\eps< \frac{1}{\|g\|_{C^1}}$, 
$f\in B_K\cap F_N$ and $e\in F_N$, the Fr\'echet derivative  $D_f\tilde{\mathcal L}_{\eps,N}\in L(F_N,F_N)$  has the representation 
\begin{align}
\label{fullderiveqn}
    D_f\tilde\L_{\eps,N}(e)=\Pi_N\L_{T_{\eps, f}}e + \Pi_N \left[\mathcal J(f)\right](e),
\end{align}
where for $h\in  F_N$, 
\begin{align}\label{eqdeltafif}
[\mathcal J(h)](e):=\partial_f(\L_{T_{\eps, f}}h)(e)=
    \partial_f(\L_{T_{\eps, f}}h)(e)=-\eps\L_{I_f}\left[\left(\frac{\L_{T}(h)\cdot G(e)}{|I_{f}'|}\right)'\right].    
\end{align}

Furthermore, the following Taylor expansion holds: for any $e\in F_N$,
\begin{align}\label{eq:taylordisc}
    \|\tilde\L_{\eps,N} (f+e) -\tilde \L_{\eps,N}(f) -D_f \tilde \L_{\eps,N} (e)\|_{W^{1,1}}=(1+\|f\|_{W^{3,1}})o(\|e\|_{W^{1,1}}).
\end{align}

\end{proposition}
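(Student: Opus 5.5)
Since $\Pi_N$ is linear and bounded on $W^{1,1}$ with norm at most one (Lemma \ref{lemkern}, item 1), it suffices to prove the expansion for the undiscretised map $f\mapsto \L_{T_{\eps,f}}f$ and then apply $\Pi_N$. We write $\L_{T_{\eps,f}}=\L_{I_f}\L_T$, valid because $T_{\eps,f}=I_f\circ T$. This gives
\begin{align*}
\L_{T_{\eps,f+e}}(f+e)-\L_{T_{\eps,f}}f=\L_{T_{\eps,f+e}}e+\bigl(\L_{I_{f+e}}-\L_{I_f}\bigr)\L_T f .
\end{align*}
The first term equals $\L_{T_{\eps,f}}e+(\L_{I_{f+e}}-\L_{I_f})\L_Te$. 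This contributes the linear piece $\L_{T_{\eps,f}}e$, plus a remainder that is quadratic in $e$ once we know $\|\L_{I_{f+e}}-\L_{I_f}\|_{W^{2,1}\to W^{1,1}}=O(\eps\|e\|_{W^{1,1}})$. That bound is \eqref{eq:closeop}-type, and $\|\L_Te\|_{W^{2,1}}$ is controlled because $e\in F_N$ is smooth. So everything reduces to differentiating $f\mapsto \L_{I_f}u$ for fixed $u=\L_Tf$, or $u=\L_Th$ in the general notation of \eqref{eqdeltafif}.

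The map $I_{f+e}=I_f+\eps G(e)$ is a perturbation of $I_f$ by a $C^5$ function of size $\eps\|G(e)\|_{C^k}\le \eps\|g\|_{C^k}\|e\|_{L^1}$. To compute the derivative, I will use duality. For a test function $\varphi$,
\begin{align*}
\int\varphi\,\L_{I_{f+e}}u=\int \varphi(I_f+\eps G(e))\,u .
\end{align*}
Taylor expanding $\varphi$ to first order gives $\int\varphi\circ I_f\,u+\eps\int \varphi'\circ I_f\,G(e)u+O(\eps^2\|G(e)\|_\infty^2)$. Next we write $\varphi'\circ I_f=(\varphi\circ I_f)'/I_f'$ and integrate by parts. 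The first order term then becomes
\begin{align*}
-\eps\int\varphi\circ I_f\,\bigl(u\,G(e)/I_f'\bigr)' ,
\end{align*}
which equals $-\eps\int\varphi\,\L_{I_f}[(uG(e)/I_f')']$. Here $|I_f'|=I_f'$, since $I_f'\ge 1-\eps\|g\|_{C^1}>0$. This identifies $\mathcal J$ as in \eqref{eqdeltafif}. It is linear in $e$, and it is bounded $F_N\to W^{1,1}$ by Lemmas \ref{lem:boudifprimeinv}, \ref{lem:LIfcon} and \ref{lem:product}, with norm $\lesssim \eps\|u\|_{W^{2,1}}\|e\|_{L^1}$. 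So it is a legitimate candidate for the Fréchet derivative.

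The main obstacle is the remainder estimate in $W^{1,1}$ rather than in a weak (dual) sense. I would do it pointwise instead of by duality. Using the explicit formula $\L_{I}u=(u/I')\circ I^{-1}$, set $I_t:=I_f+t\eps G(e)$ for $t\in[0,1]$. By Lemma \ref{lem:boudifprimeinv} each $I_t$ is a diffeomorphism with uniformly bounded $C^4$ data. We write
\begin{align*}
\L_{I_{f+e}}u-\L_{I_f}u-[\mathcal J]=\int_0^1\Bigl(\tfrac{d}{dt}\L_{I_t}u-\tfrac{d}{dt}\L_{I_t}u\big|_{t=0}\Bigr)dt .
\end{align*}
Here $\tfrac{d}{dt}\L_{I_t}u=-\eps\L_{I_t}[(uG(e)/I_t')']$, by the same computation with $I_f$ replaced by $I_t$. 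The difference of these derivatives at $t$ and at $0$ involves $(\L_{I_t}-\L_{I_0})$ applied to a function whose $W^{2,1}$ norm is $\lesssim\|u\|_{W^{3,1}}\|e\|_{L^1}$. It also involves $1/I_t'-1/I_0'=O(\eps\|e\|_{L^1})$ in $C^2$. Applying the $W^{2,1}\to W^{1,1}$ Lipschitz bound for $I\mapsto\L_I$ (proved like Lemma \ref{lem:LIfcon}, via the mean value theorem on $I^{-1}$) gives a remainder of order $\eps^2\|u\|_{W^{3,1}}\|e\|_{W^{1,1}}^2$.

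Finally, we take $u=\L_Tf$. The Lasota--Yorke inequality \eqref{eq:standLY} for $\L_T$ on $W^{3,1}$ gives $\|\L_Tf\|_{W^{3,1}}\lesssim 1+\|f\|_{W^{3,1}}$, which yields the factor $(1+\|f\|_{W^{3,1}})$. Collecting all remainders, each is $O(\|e\|^2_{W^{1,1}})=o(\|e\|_{W^{1,1}})$; on the finite dimensional space $F_N$ all norms are equivalent, so no further regularity of $e$ is needed. Applying $\Pi_N$ then gives \eqref{fullderiveqn} and \eqref{eq:taylordisc}.
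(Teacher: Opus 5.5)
Your argument is correct, but it takes a different route from the paper's proof.

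\textbf{The paper's route.} The paper identifies the derivative by a pointwise chain rule. It writes $\L_{I_f}h(x)=F(f,I_f^{-1}(x))$ with $F(f,\cdot)=h/I_f'$. It then computes $\partial_1F$ by expanding $1/I_{f+e}'$, takes $\partial_2F$ as a spatial derivative, and obtains $\partial_f I_f^{-1}$ by implicitly differentiating $I_f^{-1}\circ I_f=\mathrm{id}$, using a mean value argument to make this uniform in $x$. The resulting sum is simplified to $-\L_{I_f}[(h\,\eps G(e)/I_f')']$. This yields only an $L^1$ expansion with an $o(\cdot)$ remainder. A second step upgrades it to $W^{1,1}$ via the commutation identity $(\L_{I_f}h)'=\L_{I_f}((h/I_f')')$, re-applying the $L^1$ expansion to $(h/I_f')'$ and to $(h/I_{f+e}')'-(h/I_f')'$; this is where $\|h\|_{W^{3,1}}$ enters. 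The paper handles the piece $(\L_{T_{\eps,f+e}}-\L_{T_{\eps,f}})e$ by re-expanding with $h=e$ and bounding $\mathcal J(e)(e)$ through norm equivalence on $F_N$.

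\textbf{Your route.} You identify the derivative in one line by duality and integration by parts. You then control the remainder directly in $W^{1,1}$ through the integral form along $I_t=I_f+t\eps G(e)$, combined with a $W^{2,1}\to W^{1,1}$ Lipschitz bound for $I\mapsto\L_I$. You also dispose of the $e$-piece with that same Lipschitz bound.

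\textbf{What your route buys.} It gives a quantitative remainder, $O(\eps^2\|u\|_{W^{3,1}}\|e\|_{L^1}^2)$ for the $f$-part, which is stronger than the stated $o(\|e\|_{W^{1,1}})$. It avoids the uniform-in-$x$ implicit differentiation of $I_f^{-1}$. It is also essentially the estimate the paper needs later for Lemma \ref{lemcontinuityderiv}.

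\textbf{What it costs.} Three points need to be supplied.
\begin{itemize}
\item You only sketch the Lipschitz bound. It is the $I$-analogue of \eqref{eq:closeop}. It follows from the same homotopy identity applied to $(1-s)J+sI$ (first for smooth $w$, then by density), together with the uniform $W^{1,1}$ bound of Lemma \ref{lem:LIfcon}.
\item You should justify the $t$-derivative formula in the strong sense. For instance, $u=\L_Tf\in C^4$ makes $(t,x)\mapsto\L_{I_t}u(x)$ jointly smooth. Alternatively, integrate the weak identity in $t$ and use continuity of $t\mapsto\L_{I_t}[(uG(e)/I_t')']$ in $W^{1,1}$.
\item Lemma \ref{lem:boudifprimeinv} is stated for $f\in\mathcal D_1$, and $f+te$ need not be a density. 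Its bounds persist for small $\|e\|_{L^1}$ because $I_t'\ge 1-\eps\|g\|_{C^1}(1+\|e\|_{L^1})>0$, and small $e$ is all that Fr\'echet differentiability requires.
\end{itemize}
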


\begin{proof}
We prove the existence of the Fr\'echet derivative through the development of \eqref{eq:taylordisc}. 
Recall $f\in B_K\cap F_N$ and $e\in F_N$.
The Frechet derivative of $\Pi_N\tilde{\L}_\eps$ corresponds to the first-order term in $e$ in the development of $\Pi_N\tilde{\L}_\eps(f+e)$ and upper bounding the other terms will give us the Taylor expansion \eqref{eq:taylordisc}.
To this end, note
\begin{equation}
\Pi_N\tilde{\L}_\eps(f+e)=\Pi_N\tilde{\L}_\eps f+\Pi_N(\L_{T_{\eps, f+e}}-\L_{T_{\eps, f}})f+\Pi_N\L_{T_{\eps, f}}e+\Pi_N(\L_{T_{\eps, f+e}}-\L_{T_{\eps, f}})e.
\label{eq:lepsderiv}
\end{equation}
Thus to prove the Taylor expansion \eqref{eq:taylordisc}, we show below it is enough to prove the derivative $(\partial_f\L_{T_{\eps, f}}h)$ on $W^{1,1}$ is well defined and the asymptotic \eqref{eqderivo} holds for any $h,f\in B_K\cap F_N$ and $e\in F_N$ 
\begin{align}\label{eqderivo}
    \|(\L_{T_{\eps, f+e}}-\L_{T_{\eps, f}})h-(\partial_f\L_{T_{\eps, f}}h)(e)\|_{W^{1,1}}=o(\|e\|_{W^{1,1}}).
\end{align}
Assuming \eqref{eqderivo}, one has, using the desired expression for $D_f\tilde{\L}_{\eps,N}$ as in \eqref{fullderiveqn}, and using \eqref{eq:lepsderiv},
\begin{align}
&\|\Pi_N\tilde{\L}_\eps(f+e)-\Pi_N\tilde{\L}_\eps f-\Pi_N\L_{T_{\eps, f}}e-\Pi_N\left[\mathcal J(f)\right](e)\|_{W^{1,1}}\nonumber\\
\leq& \|\Pi_N(\L_{T_{\eps, f+e}}-\L_{T_{\eps, f}})f-\Pi_N\left[\mathcal J(f)\right](e)\|_{W^{1,1}}+\|\Pi_N(\L_{T_{\eps, f+e}}-\L_{T_{\eps, f}})e\|_{W^{1,1}}\nonumber\\
\leq& \|\Pi_N(\L_{T_{\eps, f+e}}-\L_{T_{\eps, f}})f-\Pi_N\left[\mathcal J(f)\right](e)\|_{W^{1,1}}+\|\Pi_N(\L_{T_{\eps, f+e}}-\L_{T_{\eps, f}})e-\Pi_N\left[\mathcal J(e)\right](e)\|_{W^{1,1}}\nonumber\\
&+\|\Pi_N\left[\partial_f\L_{T_{\eps, f}}e\right](e)\|_{W^{1,1}}\nonumber\\
\leq & o(\|e\|_{W^{1,1}})+\|\left[\partial_f\L_{T_{\eps, f}}e\right](e)\|_{W^{1,1}}\label{eq:devtildelderiv}\\
&=o(\|e\|_{W^{1,1}}).\label{eq:devtildelderiv2}
\end{align}
It remains to show that $\|(\partial_f\L_{T_{\eps, f}}e)(e)\|_{W^{1,1}}=o(\|e\|_{W^{1,1}}).$
We will show this later 
(see equation \eqref{eqderivif} replacing $h$ by $\L_Te$ and the computation in \eqref{eq:boundldeuxerror}).

We will prove equation \eqref{eqderivo} in two steps, first we prove that 
\begin{align*}
    \|(\L_{T_{\eps, f+e}}-\L_{T_{\eps, f}})h-(\partial_f\L_{T_{\eps, f}}h)(e)\|_{L^{1}}=o(\|e\|_{L^1}),
\end{align*}
 and in a second step we use it to prove
\begin{align*}
    \|(\L_{T_{\eps, f+e}}-\L_{T_{\eps, f}})h-(\partial_f\L_{T_{\eps, f}}h)(e)\|_{W^{1,1}}=o(\|e\|_{L^1}).
\end{align*}

\textbf{Step 1: Proof of Formula \eqref{eqderivo} with $L^1$ norm}

Since $\L_{T_{\eps, f}}=\L_{I_f}\L_{T}$,
\begin{align*}
    \partial_f \L_{T_{\eps, f}}h=(\partial_f \L_{I_f})\L_{T}h.
\end{align*}
We consider the derivative of 
$$\L_{I_f}: h \mapsto \left(\frac{h}{|I_f'|}\right)\circ I_f^{-1},$$
where
$$I_f: x\in \mathbb S^1 \mapsto x+ \eps G(f).$$ 
Notice that if we assume $\eps<\frac{1}{\|g\|_{C^1}}$ 
then, 
for $f\in B_K$, one has $\|f\|_{L^1}=1$  and $\eps \int \partial_1g(x,y)f(y)dy\leq \eps \|g\|_{C^1}\leq 1$ for any $x\in \mathbb S ^1$. Thus $|I_f'|=I_f'$.
Let $F(f,x):=(\frac{h}{|I_f'|})(x)$ for $f\in B_K$ and $x\in \mathbb S^1$, then
\begin{equation*}
    \L_{I_f}h(x)=(\frac{h}{|I_f'|})\circ I_f^{-1}(x):=F(f,I_f^{-1}(x)).
\end{equation*}
Thus for $x\in \mathbb S^1$,
\begin{align}
\label{fullderivF}
    \partial_f(F(f,I_f^{-1}(x)))=\partial_2 F(f,I_f^{-1}(x))\partial_f(I_f^{-1}(x))+\partial_1F(f,I_f^{-1}(x)).
\end{align}
We start with the computation of $\partial_1F(f,x)$ 
\begin{align*}
    F(f+e,.)&=\frac{h}{|I_{f+e}'|}\\
    &=\frac{h}{|I_{f}'|}\left(\frac{1}{1+(\eps G(e))'/|I_{f}'|}\right)\\
    &=\frac{h}{|I_{f}'|}\left(1-\frac{(\eps G(e))'}{|I_{f}'|}\right)+\frac{h}{|I_{f}'|}\sum_{k\geq 2} \left(-\frac{(\eps G(e))'}{|I_{f}'|}\right)^k\\
   &=\frac{h}{|I_{f}'|}-\frac{h}{|I_{f}'|^2}(\eps G(e))'+\frac{h}{|I_{f}'|}O(\|e\|_{L^1}^2),
\end{align*}
where we used in the lines above from the expression of $Ge$ the following fact that 
\begin{align}\label{eq:boundG}
\|G(e)\|_{C^k}&\leq \|\int g(x,y)e(y)dy\|_{C^{k}}\nonumber\\
&\leq \|g\|_{C^k}\|e\|_{L^1}.
\end{align}

Thus 
\begin{align}\label{eqdunf}
    \partial_1F(f,x)[e]=-\frac{h}{|I_{f}'|^2}(\eps G(e))'(x)
\end{align}
and
\begin{align}\label{eq:partialf}
\|F(f+e,\cdot)-F(f,\cdot)-\partial_1F(f,\cdot)[e]\|_\infty=\|h\|_{L^1}O(\|e\|_{L^1}^2).
\end{align}

As for $\partial_2F(f,x)$, it is a regular derivative on $(\mathbb R,|\cdot|)$. Recall that we have chosen $\eps$ sufficiently small so that  for $f\in B_K$ we have $|I_f'|=I_f'$, thus:
\begin{align}\label{eqddeuxf}
    \partial_2F(f,x)=\frac{h'I_f'-hI_f''}{I_f'^2},
\end{align}
and since $\mathbb S^1$ is compact, $h\in W^{3,1}$, and $x\mapsto I_f(x)\in C^5$, one has $x\mapsto F(f,x)$ is $C^1$ and 
\begin{align*}
\|F(f,x+\delta)-F(f,x)-\partial_2F(f,x)\delta\|_{L^1}=\|h\|_{W^{2,1}}O(\delta^2).    
\end{align*}

We are left to compute $\partial_f (I_f^{-1})(x)$. To do so, consider $I_f^{-1}$ as a two-coordinate function $(f,x)\mapsto I_f^{-1}(x)$. Under this formalism,
\begin{align*}
    0=\partial_f(I_f^{-1}\circ I_f(x))=\partial_1I_f^{-1}(I_f(x))+\partial_2I_f^{-1}(I_f(x))[\partial_fI_f(x)].
\end{align*}
Thus,
\begin{align*}
\partial_1I_f^{-1}(I_f(x))=-\partial_2I_f^{-1}(I_f(x))[\partial_fI_f(x)]   
\end{align*}
and since $\partial_2I_f^{-1}(I_f(x))=(\partial_2I_f)^{-1}$ in $(\mathbb R,|\cdot|)$,
\begin{align*}
\left[\partial_1I_f^{-1}(I_f(x))\right](e)&=\left[-(\partial_2I_f(x))^{-1}\partial_fI_f(x)\right](e)\\
&=-\frac{\eps G e}{I_f'}(x).
\end{align*}
Thus, 
\begin{align*}
    \left[\partial_1(I_f^{-1}(x))\right](e)&=\left[-\left[(\partial_2I_f(\cdot))^{-1}\partial_fI_f(\cdot)\right]\circ I_f^{-1}(x)\right](e)\\
&=\left[-\frac{\eps G e}{I_f'}\circ  I_f^{-1}(x)\right](e).
\end{align*}
In other words, for any fixed $x\in \mathbb S^1$
\begin{align}\label{eq:derivifsansexp}
I_{f+e}^{-1}(I_f(x))=I_f^{-1}(I_f(x))-\frac{\eps G e}{I_f'}(x)+o_x(\|e\|_{L^1}).
\end{align}
Notice that since $\|I_f'\|_\infty\geq 1-\eps\|g\|_{C^1}$, the derivative is itself bounded as follows,
\begin{align}\label{eq:deltaboundf}
    \left\|\frac{\eps G e}{I_f'}\right\|_{\infty}\leq \frac{\|g\|_{C^1}}{1-\eps\|g\|_{C^1}}\|e\|_{L^1}
\end{align}
For $e\in F_N$,
for each $x\in \mathbb S^1$ one can apply a standard mean value theorem to state that there is $e'_x:=\delta_x e$ with $\delta_x\in (0,1)$ such that
\begin{align*}
    \frac{I_f^{-1}(x)-I_{f+e}^{-1}(x)}{\|e\|_{L^1}}=\partial_1(I_{f+e'_x}^{-1}(x))[\frac{e}{\|e\|_{L^1}}].
\end{align*}
Thus for any $x\in \mathbb S^1$, by the uniform continuity of $f\mapsto \partial_1(I_f^{-1}(x))[e]$,
\begin{align}
    \left|\frac{I_f^{-1}(x)-I_{f+e}^{-1}(x)}{\|e\|_{L^1}}-\partial_1(I_f^{-1}(x))\left[\frac{e}{\|e\|_{L^1}}\right]\right|&=\left|\partial_1(I_{f+e'_x}^{-1}(x))\left[\frac{e}{\|e\|_{L^1}}\right]-\partial_1(I_f^{-1}(x))\left[\frac{e}{\|e\|_{L^1}}\right]\right|\nonumber\\
    &=\left|\frac{\eps G (\frac{e}{\|e\|_{L^1}})}{I_{f+e'_x}'}\circ  I_{f+e'_x}^{-1}(x)-\frac{\eps G (\frac{e}{\|e\|_{L^1}})}{I_f'}\circ  I_f^{-1}(x)\right|,\label{eq:littlederiv}
\end{align}
Notice that for any $x\in \mathbb S^1$, $\|I_f-I_{f+e}\|_\infty\leq \eps\|g\|_\infty \|e\|_{L^1}$
and since $x\in \mathbb S^1\mapsto I_f(x)$ is $C^1$ and invertible, we deduce that $\lim_{\|e\|_{L^1}\to 0}\|I_f^{-1}-I_{f+e}^{-1}\|_\infty=0$.
\begin{align*}
    \lim_{\|e\|_{L^1}\to 0}\|I_f^{-1}-I_{f+e}^{-1}\|_{\infty}&=\lim_{\|e\|_{L^1}\to 0}\|I_{f}^{-1}\circ I_{f+e}\circ I_{f+e}^{-1}-I_{f}^{-1}\circ I_{f}\circ I_{f+e}^{-1}\|_\infty\\
    &=\lim_{\|e\|_{L^1}\to 0}\|I_{f}^{-1}\circ I_{f+e}-I_{f}^{-1}\circ I_{f}\|_\infty=0.
\end{align*}
Thus combining this with equation \eqref{eq:littlederiv} we obtain the required equation \eqref{eq:derivifsansexp} uniformly in $x$
\begin{align*}
    \|I_f^{-1}(x)-I_{f+e}^{-1}(x)-\partial_1(I_f^{-1}(x))[e]\|_\infty=o(\|e\|_{L^1}).
\end{align*}

Notice from the expression \eqref{eqddeuxf} of $\partial_2F(f,\cdot)$ that  $f\in( B_K\cap F_N,\|\cdot\|_{L^1})\mapsto \partial_2F(f,\cdot) \in L(\mathbb R, L^1)$ is continuous.
The final result of \textbf{Step 1}, that is equation \eqref{eqderivo} with the $L^1$ norm replacing the $W^{1,1}$ norm,
is then obtained by recombining everything 
using first equation \eqref{eqddeuxf} and the continuity of $\partial_2F(f,\cdot)$, then \eqref{eq:derivifsansexp} with the boundedness of the derivative $\partial_fI_f^{-1}$ (see equation \eqref{eq:deltaboundf}) and finally the expansion \eqref{eq:partialf}
in the computation that follows. For brevity, we will write $F(f,I_f^{-1})$ instead of $F(f,\cdot)\circ I_f^{-1}$ in the equations below.
\begin{align*}
    F(f+e,I_{f+e}^{-1})&=F(f+e,I_f^{-1})+ [\partial_2F(f+e,I_f^{-1})](I_{f+e}^{-1}-I_f^{-1}) +\|h\|_{W^{2,1}}O_{L^1}(\|I_{f+e}^{-1}-I_f^{-1}\|_{\infty}^{2})\\
    &=F(f+e,I_f^{-1})+ [\partial_2F(f+e,I_f^{-1})](\partial_fI_f^{-1}(e))+o_{L^1}(\|e\|_{L^1}) \\
    &+\|h\|_{W^{2,1}}O_{L^1}(\|\partial_fI_f^{-1}(e)+o_{\infty}(\|e\|_{L^1})\|_{\infty}^{2})\\
    &=F(f,I_f^{-1})+\partial_1F(f,I_f^{-1})+ [\partial_2F(f+e,I_f^{-1})](\partial_fI_f^{-1}(e))+(1 +\|h\|_{W^{2,1}})o_{L^1}(\|e\|_{L^1}))\\
     &=F(f,I_f^{-1})+\partial_1F(f,I_f^{-1})+ [\partial_2F(f,I_f^{-1})](\partial_fI_f^{-1}(e))+(1 +\|h\|_{W^{2,1}})o_{L^1}(\|e\|_{L^1})).
\end{align*}
The latter display can be summarized as
\begin{align}\label{eq:derivlif}
    \|\L_{I_{f+e}}h-\L_{I_{f}}h-\partial_f(\L_{I_{f}}h)[e]\|_{L^1}=(1+\|h\|_{W^{2,1}})o(\|e\|_{L^1}).
\end{align}
Using equation \eqref{fullderivF} along with the formulae \eqref{eqdunf}, \eqref{eqddeuxf} and \eqref{eq:derivifsansexp}  we may express $\partial_f(\L_{I_{f}}h)$ as
\begin{align}
   \left[\partial_f(\L_{I_{f}}h)(x)\right](e)&=\left[\partial_2 F(f,I_f^{-1}(x))\partial_f(I_f^{-1}(x))\right](e)+\left[\partial_1F(f,I_f^{-1}(x))\right](e)\nonumber\\
     &=\left[-\left(\frac{h'I_f'-hI_f''}{|I_f'|^2}\right) \frac{\eps G e}{I_f'}-\frac{h}{|I_{f}'|^2}(\eps Ge)'\right]\circ I_f^{-1}(x))\nonumber\\
 &=-\left[\frac{1}{I_f'}\left[\left(\frac{h}{|I_{f}'|}\right)' \eps G e+\frac{h}{|I_{f}'|}(\eps Ge)'\right]\right]\circ I_f^{-1}(x))\nonumber\\
&=-\left[\frac{1}{I_f'}\left(\frac{h}{|I_{f}'|} \eps G e\right)'\right]\circ I_f^{-1}(x))\nonumber\\
&=-\L_{I_f}\left[\left(\frac{h}{|I_{f}'|} \eps G e\right)'\right](x).\label{eqderivif}
\end{align}
Thus,
\begin{align*}
    \|(\L_{T_{\eps, f+e}}-\L_{T_{\eps, f}})h-(\partial_f\L_{T_{\eps, f}})h\|_{L^1}=(1+\|\mathcal{L}_Th\|_{W^{2,1}})o(\|e\|_{L^{1}(m)})=(1+\|h\|_{W^{2,1}})o(\|e\|_{L^{1}(m)}),
\end{align*}
with 
\begin{equation}
    \label{Tepsderiv}
\partial_f(\L_{T_{\eps, f}}h)(e)=\partial_f(\L_{I_{f}}\circ\L_Th)(e)=-\L_{I_f}\left[\left(\frac{\L_{T}h}{|I_{f}'|} \eps G e\right)'\right].
\end{equation}

\textbf{Step 2: Proof of Formula \eqref{eqderivo} with $W^{1,1}$ norm}

We now prove additional regularity, that is
\begin{align*}
    \|(\L_{T_{\eps, f+e}}-\L_{T_{\eps, f}})h-(\partial_f\L_{T_{\eps, f}})h\|_{W^{1,1}}=o(\|e\|_{W^{1,1}}).
\end{align*}
Recall that by \eqref{eq:Lifh}, for $h\in W^{2,1}$,
\begin{align*}
    \left(\L_{I_f}(h)\right)'=\L_{I_f}(\left(\frac{h}{I_f'}\right)').
\end{align*}
Thus,
\begin{align}
    \left(\left(\L_{I_{f+e}}-\L_{I_f}\right)h\right)'&=
\L_{I_{f+e}}\left(\left(\frac{h}{I_{f+e}'}\right)'\right)-\L_{I_f}\left(\left(\frac{h}{I_f'}\right)'\right)\nonumber\\
    &=(\L_{I_{f+e}}-\L_{I_f})\left(\left(\frac{h}{I_f'}\right)'\right)+\L_{I_{f+e}}\left(\left(\frac{h}{I_{f+e}'}\right)'-\left(\frac{h}{I_f'}\right)'\right).\label{eqderivnorm}
\end{align}
Also note that, using \eqref{eq:derivlif} and substituting $(h/I_f')'$ for $h$ in \eqref{eqderivif}, we obtain, using the bound \eqref{eq:boundG} to move the derivatives of $e$ onto $g$ in the definition of $G$,
\begin{align}\label{eqtermunderiv}
    \left\|(\L_{I_{f+e}}-\L_{I_f})\left(\left(\frac{h}{I_f'}\right)'\right)-\L_{I_f}\left(\left(\frac{1}{I_f'}\left(\frac{h}{I_f'}\right)'\eps Ge\right)'\right)\right\|_{L^1}=(1+\|h\|_{W^{3,1}})o(\|e\|_{L^1}).
\end{align}
As for the second term in \eqref{eqderivnorm}, let $H:=\left(\frac{h}{I_{f+e}'}\right)'-\left(\frac{h}{I_f'}\right)'$, then according to formula \eqref{eq:derivlif}
\begin{align}\label{eq:Hderiv}
&\left\|\L_{I_{f+e}}\left(H\right)-\L_{I_f}\left(H\right)-\partial_f(\L_{I_f}H)[e]\right\|_{L^1}=(1+\|H\|_{W^{2,1}})o(\|e\|_{L^1})=(1+\|h\|_{W^{3,1}})o(\|e\|_{L^1})
\end{align}
To estimate $\L_{I_{f+e}}\left(H\right)$ it is thus enough to estimate $\partial_f(\L_{I_f}H)[e]$ and $\L_{I_f}\left(H\right)$.
We start with an estimate of $H$ in $\|\cdot\|_\infty$ where we used the bound \eqref{eq:boundG}: 
\begin{align}
  \|H\|_\infty&=\left\| \left(\frac{h}{I_{f+e}'}\right)'-\left(\frac{h}{I_f'}\right)' \right\|_\infty=\left\|\left[h\left(\frac{1}{I_{f}'+\eps (Ge)'}-\frac{1}{I_{f}'}\right)\right]'\right\|_\infty\nonumber\\
  &=\left\|\left[\frac{h}{I_{f}'}\sum_{k\geq 1}\left(-\frac{\eps (Ge)'}{I_{f}'}\right)^k\right]'\right\|_\infty=O(\|e\|_{L^1}).\label{Hinfbound}
\end{align}

Using \eqref{Hinfbound}, we now estimate $\partial_f(\L_{I_f}H)[e]$, according to equation \eqref{eqderivif} :
\begin{align}\label{eq:grandhderiv}
    \|\partial_f(\L_{I_f}H)[e]\|_\infty&=\left\|\L_{I_f}\left[\left(\frac{H}{I_{f}'} \eps G e\right)'\right] \right\|_\infty\nonumber\\
    &=\left\|\L_{I_f}\left[\left(\left[\frac{h}{I_{f}'}\sum_{k\geq 1}\left(-\frac{\eps (Ge)'}{I_{f}'}\right)^k\right]'\frac{\eps G e}{I_{f}'} \right)'\right]\right\|_\infty\nonumber\\
    &=\left\|\frac{1}{I_{f}'}\left[\left(\left[\frac{h}{I_{f}'}\sum_{k\geq 1}\left(-\frac{\eps (Ge)'}{I_{f}'}\right)^k\right]'\frac{\eps G e}{I_{f}'} \right)'\right]\right\|_\infty\nonumber\\
    &\leq C\|h\|_{C^2}\left\|\left(\left[\sum_{k\geq 1}\left(-\frac{\eps (Ge)'}{I_{f}'}\right)^k\right]'\frac{\eps G e}{I_{f}'} \right)'\right\|_\infty\nonumber\\
    &=o(\|e\|_{L^1}).
\end{align}
We now compute $\L_{I_f}\left(H\right)$: using \eqref{Hinfbound}, one has
\begin{align}\label{eqtermdeuxderiv}
\L_{I_f}\left(H\right)&= \L_{I_f}\left(\left[\frac{h}{I_{f}'}\sum_{k\geq 1}\left(-\frac{\eps (Ge)'}{I_{f}'}\right)^k\right]'\right)\nonumber\\
&=-\L_{I_f}\left(\left[\frac{h}{I_{f}'}\frac{\eps (Ge)'}{I_{f}'}\right]'\right)+o_{L^1}(\|e\|_{L^1})
\end{align}
Returning to the second term in equation \eqref{eqderivnorm}, using the definition of $H$,  \eqref{eq:Hderiv}, \eqref{eq:grandhderiv}, and \eqref{eqtermdeuxderiv},  we know that 
{\begin{align*}
    \L_{I_{f+e}}\left(\left(\frac{h}{I_{f+e}'}\right)'-\left(\frac{h}{I_f'}\right)'\right)&=\L_{I_{f+e}}(H)\\
    &=\L_{I_{f}}(H)+\partial_f(\L_{I_f}H)[e]+(1+\|H\|_{W^{2,1}})o_{L^1}(\|e\|_{L^1})\\
    &=\L_{I_{f}}(H)+(1+\|h\|_{W^{3,1}})o_{L^1}(\|e\|_{L^1})\\
 &=-\L_{I_f}\left(\left[\frac{h}{I_{f}'}\frac{\eps (Ge)'}{I_{f}'}\right]'\right)+(1+\|h\|_{W^{3,1}})o_{L^1}(\|e\|_{L^1}),
\end{align*}
The left hand side of the equation above matches with the right term in the right hand side of equation \eqref{eqderivnorm}, thus along with equation \eqref{eqtermunderiv}, the expression in equation \eqref{eqderivnorm} becomes,
}

\begin{align}\label{eqderivnorm2}
    \left(\left(\L_{I_{f+e}}-\L_{I_f}\right)h\right)'&= -\L_{I_f}\left(\left(\frac{1}{I_f'}\left(\frac{h}{I_f'}\right)'\eps Ge\right)'\right)-\L_{I_f}\left(\left[\frac{h}{I_{f}'}\frac{\eps (Ge)'}{I_{f}'}\right]'\right)+(1+\|h\|_{W^{3,1}})o_{L^1}(\|e\|_{W^{1,1}}).
\end{align}
Also note,
\begin{align}
 &-\L_{I_f}\left(\left(\frac{1}{I_f'}\left(\frac{h}{I_f'}\right)'\eps Ge\right)'\right)-\L_{I_f}\left(\left(\frac{1}{I_f'}\left(\frac{h}{I_{f}'}(\eps Ge)'\right)\right)'\right)\nonumber\\
 &=-\L_{I_f}\left(\left(\frac{1}{I_f'}\left[\left(\frac{h}{I_f'}\right)'\eps Ge+\frac{h}{I_{f}'}(\eps Ge)'\right]\right)'\right)\nonumber\\
 &=-\L_{I_f}\left[\left(\frac{1}{I_f'}\left(\frac{h}{I_{f}'} \eps G e\right)'\right)'\right]\nonumber\\
 &=\left(-\L_{I_f}\left[\left(\frac{h}{I_{f}'} \eps G e\right)'\right]\right)'\nonumber\\
 &= \left[(\partial_f\L_{T_{\eps, f}})h\right]', 
 \label{LTepsderiv2}
\end{align}
where the final equality follows from \eqref{Tepsderiv}.
Therefore, for any $h\in W^{3,1}$,
using \eqref{eqderivnorm2} and \eqref{LTepsderiv2} one obtains
\begin{align}\label{eq:derivif}
    \|(\L_{I_{ f+e}}-\L_{I_{ f}})h-(\partial_f\L_{I_{ f}}h)[e]\|_{W^{1,1}}=(1+\|h\|_{W^{3,1}})o(\|e\|_{W^{1,1}}).
\end{align}
 This concludes the proof of \eqref{eqderivo}.

We now turn to justifying 
equation \eqref{eq:devtildelderiv}--\eqref{eq:devtildelderiv2}. Using Lemmas \ref{lem:product}, \ref{lem:boudifprimeinv} and \ref{lem:LIfcon} in the formula \eqref{eqderivif} for the derivative $(\partial_f\L_{T_{\eps, f}}e)(e)$ we obtain the missing estimate for equation \eqref{eq:devtildelderiv}--\eqref{eq:devtildelderiv2}:

\begin{align}\label{eq:boundldeuxerror}
\|(\partial_f\L_{T_{\eps, f}}e)(e)\|_{W^{1,1}}&=\left\|\L_{I_f}\left[\left(\frac{\L_Te}{|I_{f}'|} \eps G e\right)'\right]\right\|_{W^{1,1}}\\
&\leq C_0\left\|\left[\left(\frac{\L_Te}{|I_{f}'|} \eps G e\right)'\right]\right\|_{W^{1,1}}\nonumber\\
&\leq 4C_0\left\|\frac{\L_Te}{|I_{f}'|}\right\|_{W^{2,1}} \left\|\eps G e\right\|_{W^{2,1}}\nonumber\\
&\leq 4C_0\left\|\frac{\L_Te}{|I_{f}'|}\right\|_{W^{2,1}} \eps\|g\|_{C^2}\|e\|_{L^1}\nonumber\\
&\leq 16C_0 \mathcal C\|e\|_{W^{2,1}}\left\| \frac{1}{I_f'} \right\|_{W^{2,1}} \eps\|g\|_{C^2}\|e\|_{L^1}\nonumber\\
&\leq 16\mathcal CR_2C_N'\|e\|_{L^1}\frac{1}{(1-\eps\|g\|_{C^1})^3} \eps\|g\|_{C^2}\|e\|_{L^1},\nonumber
\end{align}
where $C_N'$ is the constant corresponding to the equivalence of norm on $F_N$ so that $\|e\|_{W^{2,1}}\leq C_N'\|e\|_{L^1}$ 
for any $e\in F_N$.
This inserted in equation \eqref{eq:devtildelderiv} concludes the Proposition.

\end{proof}

\subsection{Lipschitz regularity of the derivative}

Lemma \ref{lemcontinuityderiv} proves the Lipschitz regularity of $f \in F_N\cap B_K\mapsto D_f\tilde \L_{\eps,N} \in L(F_N,F_N)$. 
\begin{lemma}
\label{lemcontinuityderiv}
For $\eps< \max\{\frac{1}{32R_2},\frac{1}{2^{1/3}\|g\|_{C^1}}\}$  and $N>0$, the map $f\in B_K\cap F_N\mapsto D_f\tilde \L_{\eps,N} \in L(W^{2,1},W^{1,1})$, is continuous and there is $\mathcal C_N>C_N^{(2)}$ (where $C_N^{(2)}>0$ holds for the equivalence constant between $W^{1,1}$ and $W^{2,1}$ see \eqref{eq:equivcon}) such that for 
any $f\in B_K \cap F_N$, any $\tilde f\in B_{\|\cdot\|_{W^{1,1}}}(0,1)\cap F_N$ and any $e\in F_N$,

\begin{align}
    \|D_{f}(\tilde \L_{\eps,N})(e) -D_{f+\tilde f}(\tilde \L_{\eps,N})(e)\|_{W^{1,1}}\le \mathcal C_N\|\tilde f\|_{W^{1,1}}\|e\|_{W^{1,1}}.\label{eq:holderderiv}
\end{align}

\end{lemma}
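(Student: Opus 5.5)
We use the explicit representation from Proposition \ref{prop:deriv}, $D_f\tilde\L_{\eps,N}(e)=\Pi_N\L_{I_f}\L_T e-\eps\Pi_N\L_{I_f}\big[\big(\tfrac{\L_T f}{I_f'}\,G(e)\big)'\big]$, and write the difference $D_f\tilde\L_{\eps,N}(e)-D_{f+\tilde f}\tilde\L_{\eps,N}(e)$ as a sum of four terms:
\begin{align*}
&\Pi_N(\L_{I_f}-\L_{I_{f+\tilde f}})\L_Te
-\eps\Pi_N(\L_{I_f}-\L_{I_{f+\tilde f}})\Big[\Big(\tfrac{\L_T f}{I_f'}G(e)\Big)'\Big]\\
&\quad-\eps\Pi_N\L_{I_{f+\tilde f}}\Big[\Big(\L_Tf\Big(\tfrac{1}{I_f'}-\tfrac{1}{I_{f+\tilde f}'}\Big)G(e)\Big)'\Big]
+\eps\Pi_N\L_{I_{f+\tilde f}}\Big[\Big(\tfrac{\L_T \tilde f}{I_{f+\tilde f}'}G(e)\Big)'\Big].
\end{align*}
By Lemma \ref{lemkern} item 1), $\Pi_N$ is dropped in $W^{1,1}$. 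Lemma \ref{lem:LIfcon} bounds $\L_{I_{f+\tilde f}}$ uniformly on $W^{1,1}$. This is legitimate because $f+\tilde f$ still has integral one, and the only property of $\mathcal D_1$ used there is the lower bound $I'\ge 1-\eps\|g\|_{C^1}\|f+\tilde f\|_{L^1}$. With $\|f+\tilde f\|_{L^1}\le 2$, this lower bound is kept positive by the smallness condition $\eps<1/(2^{1/3}\|g\|_{C^1})$, more or less. The last two terms are then routine. Lemma \ref{lem:product} bounds the $W^{1,1}$ norm of a derivative of a product by $W^{2,1}$ norms. We use $\|G(e)\|_{C^k}\le\|g\|_{C^k}\|e\|_{L^1}$ from \eqref{eq:boundG}. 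Writing $\frac1{I_f'}-\frac1{I_{f+\tilde f}'}=\frac{\eps (G\tilde f)'}{I_f'I_{f+\tilde f}'}$, Lemma \ref{lem:boudifprimeinv} gives a $W^{2,1}$ bound of order $\eps\|\tilde f\|_{L^1}$. We control $\|\L_T f\|_{W^{2,1}}\le C K$ and $\|\L_T\tilde f\|_{W^{2,1}}\le C\|\tilde f\|_{W^{2,1}}\le C\,C_N^{(2)}\|\tilde f\|_{W^{1,1}}$ using \eqref{eq:equivcon}. This is where the $N$-dependence of $\mathcal C_N$ enters.

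The first two terms require a Lipschitz estimate for $f\mapsto \L_{I_f}u$ in $W^{1,1}$, namely
\[
\|(\L_{I_{f}}-\L_{I_{f+\tilde f}})u\|_{W^{1,1}}\le C\eps\|\tilde f\|_{L^1}\|u\|_{W^{2,1}}.
\]
Rather than reuse the $o(\cdot)$ expansion of Proposition \ref{prop:deriv}, we integrate the derivative along the segment. Set $f_s=f+s\tilde f$, $s\in[0,1]$. Then $(\L_{I_{f+\tilde f}}-\L_{I_f})u=\int_0^1\partial_f(\L_{I_{f_s}}u)[\tilde f]\,ds$. Each integrand has the closed form \eqref{eqderivif}, $-\eps\L_{I_{f_s}}[(\tfrac{u}{I_{f_s}'}G\tilde f)']$. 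Its $W^{1,1}$ norm is at most $C_0\cdot 4\|u/I_{f_s}'\|_{W^{2,1}}\eps\|g\|_{C^2}\|\tilde f\|_{L^1}$ by Lemma \ref{lem:LIfcon}, Lemma \ref{lem:product} and Lemma \ref{lem:boudifprimeinv}, all uniform in $s$. To justify the integral formula in $W^{1,1}$, we note that $s\mapsto \L_{I_{f_s}}u$ is $C^1$ into $W^{1,1}$ for $u$ smooth enough, which follows from \eqref{eq:derivif}. We then pass to general $u$ by density, or simply observe that all functions involved lie in $F_N$ or are $C^5$.

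We apply this estimate with $u=\L_T e$ and with $u=(\tfrac{\L_Tf}{I_f'}G(e))'$. The required $W^{2,1}$ norms are $\|\L_T e\|_{W^{2,1}}\le C\,C_N^{(2)}\|e\|_{W^{1,1}}$ and $\|(\tfrac{\L_Tf}{I_f'}G e)'\|_{W^{2,1}}\le C\|\L_T f\|_{W^{3,1}}\|e\|_{L^1}$. The latter is finite because $f\in F_N$ is a trigonometric polynomial, so $\|f\|_{W^{3,1}}\le C_N K$. Both are linear in $\|e\|_{W^{1,1}}$, at the cost of $N$-dependent constants. Collecting the four terms gives \eqref{eq:holderderiv} with $\mathcal C_N$ taken larger than $C_N^{(2)}$. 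Continuity of $f\mapsto D_f\tilde\L_{\eps,N}$ into $L(W^{2,1},W^{1,1})$ follows directly from this Lipschitz bound.

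The main obstacle is the derivative loss. Each $f$-variation of $\L_{I_f}$ costs one derivative, and the $\mathcal J$ term already differentiates once. Closing the estimate in $W^{1,1}$ therefore requires $W^{3,1}$ control of $f$ and $W^{2,1}$ control of $e$ and $\tilde f$. This is only affordable on $F_N$ through norm equivalence, which is why the constant is $\mathcal C_N$ and not uniform in $N$. A secondary care point is checking that the smallness of $\eps$ keeps $I_{f_s}'$ bounded away from zero along the whole segment, including the factor $(1-\eps\|g\|_{C^1}\|f_s\|_{L^1})^{-3}$ in the bounds.
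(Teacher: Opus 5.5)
Your proof is correct, and its skeleton is the paper's. You split the difference according to the three places where $f$ enters ($\L_{I_f}$, the factor $1/I_f'$, and the density inside $\mathcal J$). You drop $\Pi_N$ by Lemma \ref{lemkern}, and you pay for the lost derivatives with norm equivalence on $F_N$, which is where $\mathcal C_N$ becomes $N$-dependent. Your telescoping is the mirror image of the paper's: the paper varies the density and $1/I'$ under $\L_{I_f}$ and then varies $\L_I$ on the new integrand, while you vary $\L_I$ first.

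The genuine difference is in the tools.
\begin{itemize}
\item \textbf{Variation of $\L_{I_f}$.} The paper inserts the first-order expansion \eqref{eq:derivif} and bounds $\partial_f\L_{I_f}$. This leaves a remainder $(1+\|u\|_{W^{3,1}})\,o(\|\tilde f\|_{W^{1,1}})$, which is neither visibly linear in $\|\tilde f\|_{W^{1,1}}$ over the whole unit ball nor homogeneous in $e$. Some extra uniformity argument is therefore needed to reach \eqref{eq:holderderiv}. Your mean-value argument along $f_s=f+s\tilde f$ avoids this. Differentiability of $F(s):=\L_{I_{f_s}}u$ at every $s$ follows from \eqref{eq:derivif} and linearity of $\partial_f$ in the direction. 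Then $\|F(1)-F(0)\|_{W^{1,1}}\le\sup_s\|F'(s)\|_{W^{1,1}}$, so you do not even need $F$ to be $C^1$. This yields an honest Lipschitz bound $C\eps\|\tilde f\|_{L^1}\|u\|_{W^{2,1}}$.
\item \textbf{Variation of $1/I_f'$.} The paper uses a Neumann series, which is what forces its condition involving $R_2$. Your identity $\frac{1}{I_f'}-\frac{1}{I_{f+\tilde f}'}=\frac{\eps(G\tilde f)'}{I_f'I_{f+\tilde f}'}$, combined with Lemmas \ref{lem:product} and \ref{lem:boudifprimeinv}, gives the $O(\eps\|\tilde f\|_{L^1})$ bound in $W^{2,1}$ with no such restriction.
\end{itemize}

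Two small points need tidying.
\begin{itemize}
\item The statement does not assume $\int\tilde f=0$, so $f+\tilde f$ need not have integral one, nor be nonnegative. As you say, this does not matter: the proofs of Lemmas \ref{lem:boudifprimeinv}, \ref{lem:LIfcon} and of \eqref{eq:derivif} only use a positive lower bound on $I'$. You should, however, say explicitly that you are using them at base points $f_s\notin B_K$.
\item That lower bound is $I_{f_s}'\ge 1-\eps\|g\|_{C^1}\|f_s\|_{L^1}>1-2\eps\|g\|_{C^1}$. It needs $\eps\|g\|_{C^1}<1/2$, which is not implied by $\eps<1/(2^{1/3}\|g\|_{C^1})$, so simply impose it. This costs nothing since $\eps$ is small throughout, and the paper's own proof tacitly needs the same when it applies Lemma \ref{lem:boudifprimeinv} to $f+\tilde f$.
\end{itemize}
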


\begin{proof}
Let $f\in B_K\cap F_N$, and recall from Proposition \ref{prop:deriv}  the explicit formula for $D_f(\Pi_N \tilde \L_\eps)$ : for any $f\in B_K\cap F_N$ and $e\in F_N$,
\begin{align}\label{eq:descripderiv}
D_f(\Pi_N\tilde{\mathcal L}_\eps)(e):=\Pi_N\L_{T_{\eps, f}}e + \Pi_N \mathcal J(f)(e).
\end{align}
The continuity of the first term and its H\"older dependency on $f$ is a consequence of the Taylor expansion \eqref{eq:derivif},
\begin{align}\label{eq:contderivholder}
    \|\Pi_N\L_{T_{\eps, f}}e-\Pi_N\L_{T_{\eps, f+\tilde f}}e\|_{W^{1,1}}\leq \|(\Pi_N\partial_f\L_{T_{\eps,f}}e) [\tilde f]\|_{W^{1,1}}+(1+\|e\|_{W^{3,1}})o(\|\tilde f\|_{W^{1,1}}),
\end{align}
with $\partial_f\L_{T_{\eps,f}}e$ given by \eqref{eqderivif}
\begin{align*}   \partial_f(\L_{T_{\eps,f}}e)[\tilde f]=-\L_{I_f}\left[\left(\frac{\L_T e}{|I_{f}'|} \eps G \tilde f\right)'\right],
\end{align*}
using relations Lemmas \ref{lem:product} ,\ref{lem:LIfcon} and \ref{lem:boudifprimeinv}, one obtains the following bound
\begin{align*}
    \|(\Pi_N\partial_f\L_{T_{\eps,f}}e)[\tilde f]\|_{W^{1,1}}&\leq 2C_0\mathcal C\frac{R_2}{(1-\eps\|g\|_{C^1})^2}\|e\|_{W^{2,1}}\|\tilde f\|_{W^{1,1}}\\
    &\leq 2C_0 \mathcal C\frac{R_2}{(1-\eps\|g\|_{C^1})^2}C_N'\|e\|_{W^{1,1}}\|\tilde f\|_{W^{1,1}},
\end{align*}
using the fact that $e\in F_N$ and the equivalence of the norms $\|\cdot\|_{W^{1,1}}$ and $\|\cdot\|_{W^{2,1}}$, $\|\cdot\|_{W^{3,1}}$ and $\|\cdot\|_{W^{4,1}}$ on a finite-dimensional space, leading to the existence of a constant $C_N'>0$ 
such that $\|\cdot\|_{W^{4,1}}\leq C_N'\|\cdot\|_{W^{1,1}}$. Back to equation \eqref{eq:contderivholder} we thus have the continuity of the first term in the expression of the derivative $D_f(\Pi_N\tilde{\mathcal L}_\eps)(e)$ in \eqref{eq:descripderiv}:
\begin{align}\label{eq:contderivholder2}
    \|\Pi_N\L_{T_{\eps, f}}e-\Pi_N\L_{T_{\eps, f+\tilde f}}e\|_{W^{1,1}}\leq  2C_0 \mathcal C\frac{R_2}{(1-\eps\|g\|_{C^1})^2}C_N'\|e\|_{W^{1,1}}\|\tilde f\|_{W^{1,1}}+(1+C_N'\|e\|_{W^{1,1}})o(\|\tilde f\|_{W^{1,1}}).
\end{align}

We now establish the continuity of the remaining right hand term in \eqref{eq:descripderiv} whose explicit formula is given by formula \eqref{eqdeltafif}.
\begin{align}
  &\|  (\Pi_N\mathcal J(f)[e]-\Pi_N\mathcal J(f+\tilde f)[e] \|_{W^{1,1}}\nonumber\\
  &\leq \|\L_{I_f}\left[\left(\frac{f}{|I_{f}'|} \eps G e\right)'\right]-\L_{I_{f+\tilde f}}\left[\left(\frac{f+\tilde f}{|I_{f+\tilde f}'|} \eps G e\right)'\right]\|_{W^{1,1}}\nonumber\\
  &\leq \|\L_{I_f}\left[\left(\frac{\tilde f}{|I_{f}'|} \eps G e\right)'\right]-\L_{I_{f}}\left[\left(\frac{f+\tilde f}{|I_{f}'|} \eps G e\right)'\right]\nonumber\\
  &+\L_{I_{f}}\left[\left(\frac{f+\tilde f}{|I_{f+\tilde f}'|} \eps G e\right)'\right]-\L_{I_{f}}\left[\left(\frac{f+\tilde f}{|I_{f+\tilde f}'|} \eps G e\right)'\right]+\L_{I_{f+\tilde f}}\left[\left(\frac{f+\tilde f}{|I_{f+\tilde f}'|} \eps G e\right)'\right]\|_{W^{1,1}}\nonumber\\
   &\leq\|\L_{I_f}\left[\left(\frac{\tilde f}{|I_{f}'|} \eps G e\right)'\right]\|_{W^{1,1}}\label{eqderivderivun}\\
   &+\|\L_{I_{f}}\left[\left((f+\tilde f) \eps G e (\frac{1}{|I_{f}'|}-\frac{1}{|I_{f+\tilde f}'|} )\right)'\right]\|_{W^{1,1}}\label{eqderivderivdeu}\\
   &+\|\L_{I_{f}}\left[\left(\frac{f+\tilde f}{|I_{f+\tilde f}'|} \eps G e\right)'\right]-\L_{I_{f+\tilde f}}\left[\left(\frac{f+\tilde f}{|I_{f+\tilde f}'|} \eps G e\right)'\right]\|_{W^{1,1}}\label{eqderivderivtroi}.
\end{align}
Equation \eqref{eqderivderivun} can be dealt with directly:  using Lemma \ref{lem:LIfcon}, there is $C>0$ such that for any $f\in B_K$, 
\begin{align}\label{eq:boundLIf}
    \|\L_{I_f}\left[\left(\frac{\tilde f}{|I_{f}'|} \eps G e\right)'\right]\|_{W^{1,1}}\leq C\|\tilde f\|_{W^{2,1}}\|e\|_{W^{1,1}}.
\end{align}
For the second term \eqref{eqderivderivdeu} we use the fact that 
\begin{align*}
    \|\frac{1}{|I_{f}'|}-\frac{1}{|I_{f+\tilde f}'|}\|_{W^{2,1}}&=\|\frac{1}{I_{f}'}-\frac{1}{I_{f}'+\eps(G\tilde f)'}\|_{W^{2,1}}\\
    &=\|\frac{1}{I_{f}'}\sum_{k\geq 1}\left(\frac{\eps(G\tilde f)'}{I_f'}\right)^k\|_{W^{2,1}}\\
     &\leq 4\|\frac{1}{I_{f}'}\|_{W^{2,1}}\sum_{k\geq 1}(4\eps)^k\|\frac{(G\tilde f)'}{I_f'}\|_{W^{2,1}}^k\\
&\leq 4\|\frac{1}{I_{f}'}\|_{W^{2,1}}\sum_{k\geq 1}(16\eps)^k\|(G\tilde f)'\|_{W^{2,1}}^k\|\frac 1 {I_f'}\|_{W^{2,1}}^k\\
&\leq 4\|\frac{1}{I_{f}'}\|_{W^{2,1}}\sum_{k\geq 1}(\frac{16\eps R_2}{(1-\eps\|g\|_{C^1})^3})^k\|(G\tilde f)'\|_{W^{2,1}}^k\\
&\leq 4\|\frac{1}{I_{f}'}\|_{W^{2,1}}\sum_{k\geq 1}(\frac{16\eps R_2}{(1-\eps\|g\|_{C^1})^3})^k\|g\|_{C^3}\|\tilde f\|_{L ^1}^k\\
    &=O(\|\tilde f\|_{L^1}),
\end{align*}
where we have used Lemma \ref{lem:boudifprimeinv} in the third to last line above.
Since $\eps< \max\{\frac{1}{32R_2},\frac{1}{2^{1/3}\|g\|_{C^1}}\}$, the series in the penultimate line is an analytic series $z\mapsto \sum_{k\geq 1}\alpha_kz^k$ in $\|\tilde f\|_{L^1}$ with coefficient $\alpha_k$ less than $1$. 
Thus it behaves as $\|\tilde f\|_{L^1}O(1)$ for any $\tilde f\in F_N\cap B_{\|\cdot\|_{L^1}}(0,1)$.
Thus the term \eqref{eqderivderivdeu} can be controlled using Lemmas \ref{lem:product} and \ref{lem:LIfcon} as follows, there is $C>0$ such that
\begin{align}\label{eq:contderiv2}
    \|\L_{I_{f}}\left[\left((f+\tilde f) \eps G e (\frac{1}{|I_{f}'|}-\frac{1}{|I_{f+\tilde f}'|} )\right)'\right]\|_{W^{1,1}}&\leq 64\eps C_0\|f+\tilde f\|_{W^{2,1}}\|\frac{1}{|I_{f}'|}-\frac{1}{|I_{f+\tilde f}'|}\|_{W^{2,1}}\|Ge\|_{W^{2,1}}.\\
    &\leq C\|\tilde f\|_{L^1}\|f+\tilde f\|_{W^{2,1}}\|e\|_{L^1}.\nonumber
\end{align}
The last term \eqref{eqderivderivtroi} will be controlled by by the derivative formula computed in \eqref{eq:derivif}. Letting $H:=\left(\frac{f+\tilde f}{|I_{f+\tilde f}'|} \eps G e\right)'$, there is $\tilde C>0$ such that
\begin{align}
    \|\L_{I_{f}}\left[H\right]-\L_{I_{f+\tilde f}}\left[H\right]\|_{W^{1,1}}&\leq \|\partial_f\L_{I_f}[H]\|_{W^{1,1}}+ C\|\tilde f\|_{W^{1,1}}(1+\|H\|_{W^{3,1}})\nonumber\\
    &\leq C\|H\|_{W^{2,1}}\|\tilde f\|_{W^{1,1}}+ \eps C\|\tilde f\|_{W^{1,1}}\|f+\tilde f\|_{W^{4,1}}\|e\|_{W^{1,1}}\nonumber\\
    &\leq \tilde CC_N'\|\tilde f\|_{W^{1,1}}\|f+\tilde f\|_{W^{1,1}}\|e\|_{W^{1,1}},\label{eq:contderiv3}
\end{align}
where $\partial_f\L_{I_f}[H]$ is given by equation \eqref{eqderivif}, and estimated using formula \eqref{eq:boundLIf} with $h:=H$, and $\|H\|_{W^{k,1}}$ has been computed using Lemma \ref{lem:product} as follows :
\begin{align*}
    \|H\|_{W^{k,1}}&=\left\|\left(\frac{f+\tilde f}{|I_{f+\tilde f}'|} \eps G e\right)'\right\|_{W^{k,1}}\\
    &=2^{2(k+1)}\left\|f+\tilde f\right\|_{W^{k+1,1}}\left\|\eps G e\right\|_{W^{k+1,1}}\left\|\frac 1 {|I_{f+\tilde f}'|}\right\|_{W^{k+1,1}}\\
    &=2^{2(k+1)}\left\|f+\tilde f\right\|_{W^{k+1,1}}\eps\| g\|_{C^{k+1}} \|e\|_{W^{1,1}}\frac{R_{k+1}}{(1-\eps\|g\|_{C^1})^k}.
\end{align*}

Collecting everything, that is plugging \eqref{eq:boundLIf}, \eqref{eq:contderiv2}, \eqref{eq:contderiv3} and  the estimates above into \eqref{eqderivderivtroi}
we obtain that there is $\tilde C_N>0$ such that 
\begin{align*}
    \|  \Pi_N\mathcal J(f)[e]-\Pi_N\mathcal J (f+\tilde f)[e] \|_{W^{1,1}}\leq \tilde C_N \|\tilde f\|_{W^{1,1}}\|e\|_{W^{1,1}}.
\end{align*}
This along with the estimate \eqref{eq:contderivholder2} immediately implies the relation \eqref{eq:holderderiv} stated in the Lemma: there is $\mathcal C_N>C_N'>C_N^{(2)}$ such that
$\|D_f(\Pi_N\tilde{\mathcal L}_\eps)-D_{f+\tilde f}(\Pi_N\tilde{\mathcal L}_\eps)\|_{F_N\mapsto F_N}\leq \mathcal C_N\|\tilde f\|_{W^{1,1}}$.
\end{proof}

\subsection{Uniform boundedness of the resolvent}
\begin{proposition}
\label{prop:4.4} 
There is $\eps_7^*>0$ such that for $N$ large enough, there is  $C_{7,N}>0$ such that for $\eps\in [0,\eps_{7}^*]$ and all $f\in B_K\cap F_N$ one has $$\|(I-D_f(\Pi_N\tilde{\L_\eps}))^{-1}|_{F_N\cap V_{1,1}}\|_{W^{1,1}}\le C_{7,N}.$$ 
\end{proposition}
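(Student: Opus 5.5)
We will prove the bound by a Neumann series: $D_f(\Pi_N\tilde\L_\eps)$ splits as a part that contracts on zero-mean functions plus a part that is small of order $\eps$. By Proposition \ref{prop:deriv},
$$D_f(\Pi_N\tilde\L_\eps)=\Pi_N\L_{T_{\eps,f}}+\Pi_N\mathcal J(f)=:A_f+E_f.$$
First we check that both pieces map $F_N\cap V^{1,1}$ into itself. For $A_f$ this holds because transfer operators preserve integrals and $\Pi_N$ preserves the zeroth Fourier coefficient. For $E_f$ it holds because $\mathcal J(f)(e)=-\eps\L_{I_f}[(\cdots)']$ is $\L_{I_f}$ applied to a derivative of a periodic function, so its integral vanishes.

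\textbf{Step 1: invert $I-A_f$.} We use Lemma \ref{lem:contracto} with the constant sequence $f_j=f\in B_K$. For $N\ge N_0$, $\eps\le\eps_1^*$ and $h\in V^{1,1}$ it gives
$$\|A_f^n h\|_{W^{1,1}}\le C\tilde\theta^n\|h\|_{W^{1,1}}.$$
Hence $I-A_f$ is invertible on $F_N\cap V^{1,1}$ through $(I-A_f)^{-1}=\sum_{n\ge0}A_f^n$. This gives the bound $\|(I-A_f)^{-1}\|_{W^{1,1}}\le C/(1-\tilde\theta)=:M$, and $M$ is uniform in $f\in B_K$, $\eps\le\eps_1^*$ and $N\ge N_0$.

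\textbf{Step 2: show $E_f$ is small.} We need $\|E_f e\|_{W^{1,1}}\le \eps\, c_N\|e\|_{W^{1,1}}$ for $e\in F_N$, with $c_N$ independent of $f\in B_K\cap F_N$. Using Lemma \ref{lemkern}(1), Lemma \ref{lem:LIfcon}, the product estimate Lemma \ref{lem:product} and Lemma \ref{lem:boudifprimeinv}, we get
$$\|E_fe\|_{W^{1,1}}\le C_0\eps\Big\|\Big(\tfrac{\L_Tf\, G(e)}{I_f'}\Big)'\Big\|_{W^{1,1}}\le C\eps\,\|\L_Tf\|_{W^{2,1}}\Big\|\tfrac1{I_f'}\Big\|_{W^{2,1}}\|g\|_{C^3}\|e\|_{L^1}.$$
Here we read the definition of $\mathcal J(f)$ with $h=f$, as in the proof of Lemma \ref{lemcontinuityderiv}. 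We bound $\|\L_T f\|_{W^{2,1}}\le (\lambda^2+C_{LY})K$ by \eqref{eq:standLY}, and $\|1/I_f'\|_{W^{2,1}}\le R_2(1-\eps\|g\|_{C^1})^{-3}$. This yields $\|E_f\|\le c\,\eps$ with $c$ depending only on $K$, $g$ and $T$. The derivatives fall on $g$ through $G$, so no $N$-dependent norm equivalence is needed.

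\textbf{Step 3: perturb.} Write $I-A_f-E_f=(I-A_f)\big(I-(I-A_f)^{-1}E_f\big)$. We choose $\eps_7^*\le\eps_1^*$ with $Mc\,\eps_7^*\le\frac12$. Then the second factor is inverted by a Neumann series with norm at most $2$, which gives $C_{7,N}=2M$. This constant is in fact uniform in $N\ge N_0$.

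\textbf{Main obstacle.} The delicate point is Step 1: we must justify that Lemma \ref{lem:contracto} applies to $A_f$ on $F_N\cap V^{1,1}$ uniformly in $f$, in particular that zero-mean functions in $F_N$ lie in $V^{1,1}$ and stay there under $A_f$. If the uniform $W^{1,1}$-contraction turned out to need an $i=2$ input, the fallback would be the $W^{2,1}$ version of the lemma combined with the norm equivalence \eqref{eq:equivcon} on $F_N$. That route would only make $C_{7,N}$ depend on $N$, which is still allowed by the statement.
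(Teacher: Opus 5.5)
Your argument is correct, and it takes a more direct route than the paper's. The paper works in three stages. First, it gives $\Pi_N\L_{T_{\eps,f}}$ a spectral gap via Keller--Liverani. Second, in Lemma \ref{lem:3.3} it shows by contradiction that every eigenvalue of $D_f(\Pi_N\tilde\L_\eps)$ with a zero-mean eigenfunction has $|\rho|<1$; in the finite-dimensional space $F_N\cap V^{1,1}$ this gives invertibility of $I-D_f(\Pi_N\tilde\L_\eps)$ for each $f$ separately. Third, it combines the Lipschitz dependence of $f\mapsto D_f\tilde\L_{\eps,N}$ (Lemma \ref{lemcontinuityderiv}) with a Neumann series to make $f\mapsto\|(I-D_f(\Pi_N\tilde\L_\eps))^{-1}\|$ continuous, and uses compactness of $B_K\cap F_N$ to extract a maximum $C_{7,N}$.

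You replace all of this with one quantitative Neumann-series perturbation around $I-\Pi_N\L_{T_{\eps,f}}$. Lemma \ref{lem:contracto}, applied to the constant sequence $f_j=f$, already bounds the unperturbed inverse uniformly in $f$, and your $O(\eps)$ bound on $\Pi_N\mathcal J(f)$ is essentially Lemma \ref{lem:uniform_bound_der}. The paper's inequality \eqref{contrinequality} is the same estimate, but it is used only to locate eigenvalues, which is why a separate continuity and compactness step is needed to control the norm of the inverse.

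Your route avoids Keller--Liverani, Lemma \ref{lemcontinuityderiv} and compactness entirely. It gives an explicit bound $2C/(1-\tilde\theta)$ that does not depend on $N\ge N_0$, which would make $\Lambda_{K,N}$ in Theorem \ref{newtonthm} independent of $N$. The paper's route additionally yields the spectral statement $|\rho|<1$, but the proposition does not need it.

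Two small points:
\begin{itemize}
\item Also impose, say, $\eps_7^*\le 1/(2\|g\|_{C^1})$. Then Proposition \ref{prop:deriv}, Lemma \ref{lem:boudifprimeinv} and Lemma \ref{lem:LIfcon} apply with $\eps$-uniform constants; otherwise the factor $(1-\eps\|g\|_{C^1})^{-3}$ in your $c$ is not controlled when you choose $\eps_7^*$.
\item The ``main obstacle'' you flag is not one. Lemma \ref{lem:contracto} is stated for $i=1$, $h\in V^{1,1}$ and arbitrary sequences in $B_K$, and zero-mean elements of $F_N\subset C^\infty$ lie in $V^{1,1}$. The $W^{2,1}$ fallback is unnecessary.
\end{itemize}
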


We prove Proposition \ref{prop:4.4} at the end of the section. 
First we need to state in Lemma \ref{lem:3.3} the existence of the resolvent $(I-D_f\tilde\L_{\eps,N})^{-1}$ for any $f\in B_K\cap F_N$, and to do this we use the following technical Lemma \ref{lem:uniform_bound_der}.
Recall, by Proposition \ref{prop:deriv}, we have
\begin{equation}\label{eq:discder}
    D_f(\tilde{\mathcal L}_{\eps,N})(e)=\Pi_N\L_{T_{\eps, f}}e -\eps \tilde D_{N,f,G}(e),
\end{equation}
where
$$
   \tilde D_{N,f,G}(e):= \Pi_N\L_{I_f}\left[\left(\frac{\L_{T}(f)\cdot G(e)}{|I_{f}'|}\right)'\right].
$$
\begin{lemma}\label{lem:uniform_bound_der}
For $\eps<\frac{1}{\|g\|_{C^1}}$, 
    $\exists C>0$ such that for all $f\in B_K$ and $e\in F_N$,
    $$\|\tilde D_{N,f,G}(e)\|_{W^{1,1}}\le \frac{CK}{(1-\eps\|g\|_{C^2})^{3}} \|g\|_{C^1}\|e\|_{L^1}.$$
\end{lemma}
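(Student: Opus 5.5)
We bound $\tilde D_{N,f,G}(e)$ by peeling off the operators one at a time, from the outside in. First, by item 1) of Lemma \ref{lemkern}, $\|\Pi_N u\|_{W^{1,1}}\le \|u\|_{W^{1,1}}$, so the projection can be dropped. Next, Lemma \ref{lem:LIfcon} gives $\|\L_{I_f} u\|_{W^{1,1}}\le C_0\|u\|_{W^{1,1}}$ with $C_0$ independent of $f\in\mathcal D_1$ and $\eps$. Going back to its proof, $C_0$ is essentially $1+2R_1(1-\eps\|g\|_{C^1})^{-2}$, which is a harmless factor. It therefore remains to estimate
\[
\left\|\left(\frac{\L_T(f)\, G(e)}{I_f'}\right)'\right\|_{W^{1,1}}\le \left\|\frac{\L_T(f)\, G(e)}{I_f'}\right\|_{W^{2,1}}.
\]
Here we use $|I_f'|=I_f'$, which holds because $\eps<1/\|g\|_{C^1}$.

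To handle the triple product we apply Lemma \ref{lem:product} twice in $W^{2,1}$. This bounds the norm by a constant times $\|\L_T f\|_{W^{2,1}}\,\|G(e)\|_{W^{2,1}}\,\|1/I_f'\|_{W^{2,1}}$. A sharper alternative, if needed, is to put $G(e)$ and $1/I_f'$ in $C^2$, since both are smooth; the product of an $L^1$-type norm with $C^k$ norms is also controlled.

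We then bound each factor separately:
\begin{enumerate}
\item[(i)] By the Lasota--Yorke inequality \eqref{eq:standLY} with $n=1$, $\|\L_T f\|_{W^{2,1}}\le \lambda^2\|f\|_{W^{2,1}}+C_{LY}\|f\|_{W^{1,1}}\le (1+C_{LY})K$ for $f\in B_K$. This produces the factor $K$.
\item[(ii)] By \eqref{eq:boundG}, $\|G(e)\|_{C^2}\le \|g\|_{C^2}\|e\|_{L^1}$. This produces the factor $\|e\|_{L^1}$ and a norm of $g$.
\item[(iii)] By Lemma \ref{lem:boudifprimeinv} with $k=2$, $\|1/I_f'\|_{W^{2,1}}\le R_2(1-\eps\|g\|_{C^1})^{-3}$. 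Since $\|g\|_{C^1}\le\|g\|_{C^2}$, this is at most $R_2(1-\eps\|g\|_{C^2})^{-3}$, which is the cube in the statement.
\end{enumerate}
Multiplying these three bounds gives the claimed inequality, with all numerical constants absorbed into $C$.

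The one mismatch is that the statement displays $\|g\|_{C^1}$, whereas (ii) naturally yields $\|g\|_{C^2}$, since two derivatives fall on $G(e)$. My plan is to treat $\|g\|_{C^2}$ (a fixed finite quantity, as $g\in C^5$) as absorbed into $C$, keeping one explicit factor of $\|g\|$ to record linearity in the kernel. If instead the sharp dependence on $\|g\|_{C^1}$ is wanted, I would expand the derivative by Leibniz and check that each term with $G''(e)$ can be integrated by parts onto the other factors. I expect this bookkeeping of which norm of $g$ appears to be the only delicate point.

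The remaining concern, uniformity in $f$ and $N$, is automatic. Every constant above is independent of $N$ because $\Pi_N$ is a contraction, and independent of $f\in B_K$ by Lemmas \ref{lem:LIfcon} and \ref{lem:boudifprimeinv}.
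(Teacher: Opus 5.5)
Your argument is correct and is essentially the paper's proof. You strip $\Pi_N$ by Lemma \ref{lemkern} and $\L_{I_f}$ by Lemma \ref{lem:LIfcon}, pass from $\|(\cdot)'\|_{W^{1,1}}$ to $\|\cdot\|_{W^{2,1}}$, and combine Lemma \ref{lem:product}, Lemma \ref{lem:boudifprimeinv}, \eqref{eq:boundG} and a bound on $\L_T f$ for $f\in B_K$; you get that last bound from the Lasota--Yorke inequality, where the paper invokes $\L_T$-invariance of $B_K$.

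The paper's own computation also ends with $\|g\|_{C^2}$ rather than $\|g\|_{C^1}$. Absorbing the ratio $\|g\|_{C^2}/\|g\|_{C^1}$ into $C$, as you propose with $g$ fixed, is therefore the right reading. The integration-by-parts detour is unnecessary; it also could not give a $g$-independent constant, since only $\|e\|_{L^1}$ is available. One small point: replacing $(1-\eps\|g\|_{C^1})^{-3}$ by $(1-\eps\|g\|_{C^2})^{-3}$ tacitly requires $\eps\|g\|_{C^2}<1$, which the statement itself needs in order to make sense.
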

\begin{proof}
Since $\L_{I_f}$ is continuous on $W^{1,1}$ (see Lemma \ref{lem:LIfcon})  
by 
Lemma \ref{lemkern}, one can find a constant $C$ such that
$$\|\tilde D_{N,f,G}(e)\|_{W^{1,1}}=\left\|\Pi_N\L_{I_f} \left[\left(\frac{\L_{T}(f)\cdot G(e)}{|I_{f}'|}\right)'\right]\right\|_{W^{1,1}}\le C\left\| \left[\left(\frac{\L_{T}(f)\cdot G(e)}{|I_{f}'|}\right)'\right]\right\|_{W^{1,1}}.$$ 
Therefore it is enough to bound 
$\left\|\left(\frac{\L_{T}(f)\cdot G(e)}{|I_{f}'|}\right)'\right\|_{W^{1,1}}$.

One has using Lemma \ref{lem:product} and Lemma \ref{lem:boudifprimeinv},
\begin{align}\label{eq:boundnumif}
\left\|\left(\frac{\L_Tf}{|I_{f}'|}  G e\right)'\right\|_{W^{1,1}}&\leq \left\|\frac{\L_Tf}{|I_{f}'|} G e\right\|_{W^{2,1}}\nonumber\\
&\leq \frac{2R_3K}{(1-\eps\|g\|_{C^2})^3} \|g\|_{C^2}\|e\|_{L^1},
\end{align}
using the facts that since $f\in B_K$, $\|f\|_{W^{2,1}}\leq K$ and that $B_K$ is $\L_T$-invariant.
\end{proof}

\begin{lemma}
\label{lem:3.3}
There is $\eps^*_6>0$ and $N_6\in \mathbb N$ such that for any  $0<\eps<\eps^*_6$, any  $N\ge N_6$, and any $f\in B_K$,
if $\rho$ is an 
eigenvalue of $D_f(\Pi_N\tilde{\mathcal L}_\eps)$ 
corresponding to an eigenfunction $e$ with $\int e=0$, then $|\rho|< 1$.
\end{lemma}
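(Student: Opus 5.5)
Suppose $D_f(\Pi_N\tilde{\mathcal L}_\eps)e=\rho e$ with $e\in F_N$, $e\neq 0$, $\int e=0$, and $|\rho|\ge 1$. The plan is to derive a contradiction from a uniform contraction on zero-mean functions combined with the smallness of the nonlinear part. By \eqref{eq:discder},
\[
\rho e=\Pi_N\L_{T_{\eps,f}}e-\eps\tilde D_{N,f,G}(e).
\]
Both terms preserve zero mean. $\L_{T_{\eps,f}}$ preserves integrals and $\Pi_N$ preserves the zeroth Fourier coefficient. The perturbation term is $\Pi_N\L_{I_f}$ applied to a derivative of a periodic function, hence has zero integral. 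So the eigenequation lives entirely in $F_N\cap V^{1,1}$. Iterating $n$ times gives
\[
\rho^n e=\bigl(\Pi_N\L_{T_{\eps,f}}-\eps\tilde D_{N,f,G}\bigr)^n e .
\]
I will expand the right-hand side as a sum over words in the two operators.

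\textbf{Step 1: uniform contraction of the linear part.} Apply Lemma \ref{lem:contracto} with the constant sequence $f_j\equiv f$, for $N\ge N_0$ and $\eps\le\eps_1^*$. This gives
\[
\|(\Pi_N\L_{T_{\eps,f}})^k h\|_{W^{1,1}}\le C\tilde\theta^k\|h\|_{W^{1,1}}
\]
for all $h\in V^{1,1}$. The constants $C$ and $\tilde\theta<1$ are independent of $f\in B_K$ (with $f\in\mathcal D_1$ as in the setting), of $N\ge N_0$, and of $\eps$.

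\textbf{Step 2: smallness of the perturbation.} Lemma \ref{lem:uniform_bound_der} gives $\|\eps\tilde D_{N,f,G}(e)\|_{W^{1,1}}\le \eps C'\|e\|_{L^1}\le \eps C'\|e\|_{W^{1,1}}$. Here $C'$ is uniform for $\eps\le\frac{1}{2\|g\|_{C^2}}$, and it does not depend on $N$.

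\textbf{Step 3: a Neumann-series (Kato-type) estimate.} Write $A=\Pi_N\L_{T_{\eps,f}}$ and $B=-\eps\tilde D_{N,f,G}$ on $Z:=F_N\cap V^{1,1}$. Then
\[
(A+B)^n=\sum \text{words } A^{k_0}BA^{k_1}B\cdots BA^{k_m}.
\]
Bounding each block $A^{k}$ by $C\tilde\theta^{k}$ and each $B$ by $\eps C'$, I get
\[
\|(A+B)^n\|_{Z}\le C(\tilde\theta+\eps CC')^n .
\]
This is the standard estimate: the sum over compositions of $n$ gives the coefficients of $(\tilde\theta+CC'\eps)^n$ up to the prefactor $C$. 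Choose $\eps_6^*\le\min\{\eps_1^*,\frac{1}{2\|g\|_{C^2}}\}$ with $\tilde\theta+\eps_6^* CC'<1$, and set $N_6=N_0$. Then
\[
|\rho|^n\|e\|_{W^{1,1}}\le C(\tilde\theta+\eps CC')^n\|e\|_{W^{1,1}} ,
\]
and letting $n\to\infty$ forces $|\rho|<1$. Equivalently, the spectral radius of $A+B$ on $Z$ is at most $\tilde\theta+\eps CC'<1$.

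\textbf{Main obstacle.} The delicate point is Step 1 applied to an arbitrary $f\in B_K$ rather than to a fixed point. Lemma \ref{lem:contracto} is stated for sequences in $B_K$, so this is covered. I also need to make sure the word-expansion bound does not lose factors of $N$. It does not, because Lemma \ref{lem:uniform_bound_der} measures $B$ by $\|e\|_{L^1}$ and hence is bounded $W^{1,1}\to W^{1,1}$ with an $N$-independent constant.

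A second subtlety is that eigenvalues may be complex, with complex eigenfunctions. I will handle this by running the argument on the complexification of $Z$; all the estimates used are linear and carry over with the same constants.
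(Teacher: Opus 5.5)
Your argument is correct. It rests on the same two ingredients as the paper's proof: uniform exponential contraction of $A=\Pi_N\L_{T_{\eps,f}}$ on zero-mean functions, and the $N$-independent $O(\eps)$ bound on $B=-\eps\tilde D_{N,f,G}$ from Lemma \ref{lem:uniform_bound_der}. You combine them differently, however.

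The paper obtains the contraction of $A$ from Keller--Liverani stability of the spectral gap, and then argues through the resolvent. Assuming $|\rho|\ge 1$, it writes $e=-\eps(\rho-A)^{-1}\tilde D_{N,f,G}(e)$ and expands $(\rho-A)^{-1}$ as a Neumann series on zero-mean functions. This gives $1\le \eps\frac{C}{1-\theta}\|\tilde D_{N,f,G}(e)\|_{W^{1,1}}$, a contradiction for small $\eps$. It also makes a preliminary case split on whether $\rho\in\sigma(A)$. That split is loosely justified, since $e$ is not an eigenfunction of $A$. It is also unnecessary: on the zero-mean subspace, every $|\rho|\ge 1$ already lies in the resolvent set of $A$.

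Your word expansion of $(A+B)^n$ has $\binom{n}{m}$ words with $m$ copies of $B$, and you may take $C\ge 1$ to handle empty $A$-blocks. This avoids the case split entirely. Invoking Lemma \ref{lem:contracto} with a constant sequence also gives uniformity in $f\in B_K$ and $N\ge N_0$ directly, rather than through an implicit uniform Keller--Liverani argument.

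Your route also yields more than the lemma asks. The bound $\|(A+B)^n|_{F_N\cap V^{1,1}}\|_{W^{1,1}}\le C(\tilde\theta+\eps CC')^n$ sums to
\[
\|(I-D_f(\Pi_N\tilde\L_\eps))^{-1}|_{F_N\cap V^{1,1}}\|_{W^{1,1}}\le \frac{C}{1-\tilde\theta-\eps CC'}.
\]
This would give Proposition \ref{prop:4.4} with a constant independent of $N$, without the continuity/compactness argument used there.
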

\begin{proof}
We first show that for $\eps$ and $N$ in the prescribed ranges,
$\Pi_N\L_{T_{\eps,f}}$ admits a spectral gap on $W^{i,1}$, $i=1,\dots,4$, uniformly in $\eps$.
Notice that by \eqref{eq:standLY} there is some $\eps_5^*>0$, for $T_{\eps,f}\in\mathcal T$, $\L_{T_{\eps,f}}$ admits a spectral gap on $W^{i,1}$, uniform in $f\in B_K$ and in $\eps$ for $0<\eps<\eps^*_5$ and $i=1,\dots,4.$
By the first item of Lemma \ref{lemkern}, the operator $\Pi_N\L_{T_{\eps,f}}$ satisfies the same Lasota-Yorke inequality as $\L_{T_{\eps,f}}$ for all $N\in\mathbb{N}$. Moreover, the second item of Lemma \ref{lemkern} implies, by the Keller-Liverani stability result \cite{KL99}, that there is an $N_6^*$  such that for $N>N_6^*$ and $f\in B_K$, one has $\Pi_N\L_{T_{\eps,f}}$ also admits a spectral gap on $W^{i,1}$, $i=1,\dots,4.$ 
We fix $N$ in this range for the remainder of the proof.

By \eqref{eq:discder}, if $\rho$ is an eigenvalue of $D_f(\Pi_N\tilde{\mathcal L}_\eps)$, then

\begin{align}\label{eq:eig}
    \rho e=\Pi_N\L_{T_{\eps, f}}e -\eps\tilde D_{N,f,G}(e).
\end{align}
Note that if $\rho$ were in $\sigma(\Pi_N\L_{T_{\eps, f}})$ then we must have $|\rho|<1$ because $\int e=0$ (using spectral decomposition of $\Pi_N\L_{T_{\eps, f}}$), and we would be done.
We now treat the situation where $\rho\notin\sigma(\Pi_N\L_{T_{\eps, f}})$. 

For a contradiction, assume that $|\rho|\ge1$.
We can also assume that $\|e\|_{W^{1,1}}=1$ simply by normalising. Then using \eqref{eq:eig}, we get 
$$e=-\eps(\rho\text{id}-\Pi_N\L_{T_{\eps, f}})^{-1}\left(\tilde D_{N,f,G}(e)\right)$$
and consequently, since $\int\tilde D_{N,f,G}(e)=0$, and using the contradiction hypothesis,
\begin{equation}
    \label{contrinequality}
\|e\|_{W^{1,1}}=1\le \eps\sum_{n=0}^\infty\frac{\|(\Pi_N\L_{T_{\eps, f}})^{n}\left(\tilde D_{N,f,G}(e)\right)\|_{W^{1,1}}}{|\rho|^{n+1}}\le \eps \frac{C}{1-\theta}\|\tilde D_{N,f,G}(e)\|_{W^{1,1}},
\end{equation}
where we have used the fact that $\Pi_N\L_{T_{\eps, f}}$ admits a spectral gap on $W^{1,1}$ and therefore $\exists C>0$, and $\theta\in(0,1)$  such that $\|(\Pi_N\L_{T_f})^{n}\left(\tilde D_{N,f,G}(e)\right)\|_{W^{1,1}}\le C\theta^n\|\tilde D_{N,f,G}(e)\|_{W^{1,1}}$.
Since for $\eps<\frac{1}{\|g\|_{C^1}}$ and $f\in B_K$, $\|\tilde D_{N,f,G}(e)\|_{W^{1,1}}$ is uniformly bounded in $f$, see Lemma \ref{lem:uniform_bound_der}.
Consequently  there is an $\eps_6^*$ such that \eqref{contrinequality} does not hold for any $\eps<\eps_6^*\le \eps_5^*$, leading to a contradiction.
Therefore $|\rho|<1$.
\end{proof}
\begin{proof}[Proof of Proposition \ref{prop:4.4}] 
 Let $\eps_7^*:=\min\{\frac{1}{32R_2},\frac{1}{2^{1/3}\|g\|_{C^1}},\eps_6^*\}$. Lemma \ref{lem:3.3} ensures that for $\eps<\eps_7^*$ and $N>N_6$, $(I-D_f(\Pi_N\tilde{\L_\eps}))^{-1}$ is well defined operator when acting on $V_{1,1}$.
 By Lemma \ref{lemcontinuityderiv}, the map $f\in B_K\cap F_N\mapsto D_f\Pi_N\tilde \L_\eps \in L(F_N,F_N)$ is continuous in {the $W^{1,1}$ topology}, and since $F_N$ is a finite-dimensional space,  the map $f\in B_K\cap F_N\mapsto (I-D_f\Pi_N\tilde \L_\eps)^{-1}$ is also continuous {on $V^{1,1}$.}
 Indeed, the proof of continuity using Neumann series is standard: for any $g\in B_K\cap F_N$ close to $f$ such that one has 
 $\|D_f\Pi_N\tilde \L_\eps-D_g\Pi_N\tilde \L_\eps\|_{V^{1,1}}< \|(I-D_f\Pi_N\tilde \L_\eps)^{-1}\|_{V^{1,1}}$,
\begin{align*}
    &\|(I-D_g\Pi_N\tilde \L_\eps)^{-1}\|_{V^{1,1}}\leq \|(I-D_f\Pi_N\tilde \L_\eps+D_f\Pi_N\tilde \L_\eps-D_g\Pi_N\tilde \L_\eps)^{-1}\|_{V^{1,1}}\\
     &\leq \|(I-D_f\Pi_N\tilde \L_\eps)^{-1}\left[I-(I-D_f\Pi_N\tilde \L_\eps)^{-1}(D_g\Pi_N\tilde \L_\eps-D_f\Pi_N\tilde \L_\eps)\right]^{-1}\|_{V^{1,1}}\\
     &\leq \|(I-D_f\Pi_N\tilde \L_\eps)^{-1}\|_{V^{1,1}}\sum_{k=0}^\infty\|(I-D_f\Pi_N\tilde \L_\eps)\|_{V^{1,1}}^k\|(D_g\Pi_N\tilde \L_\eps-D_f\Pi_N\tilde \L_\eps)\|_{V^{1,1}}^k,
\end{align*}
Since $F_N\cap B_K$ is a compact set and $f\in F_N\cap B_K\mapsto \|(I-D_f\Pi_N\tilde \L_\eps)^{-1}\|_{V^{1,1}}$ is continuous, it attains a maximum and there is $C_{7,N}>0$ such that for any $f\in B_K\cap F_N$,
 \begin{align*}
    \|(I-D_f\Pi_N\tilde \L_\eps)^{-1}\|_{V^{1,1}}\leq C_{7,N}.
 \end{align*}

\end{proof}

\section{Numerical implementation of Newton's method}\label{sec:num}

From \eqref{eq:newtonupdate} and the definition of $\mathcal{N}$ we see that the application of Newton's method primarily requires computing the action of $\Pi_N\tilde{\L}_{\eps}$ and $(\Pi_N(I-D_f(I-\Pi_N\tilde{\L}_{\eps}))^{-1}$.
Equations \eqref{fullderiveqn} and \eqref{eqdeltafif} provide an expression for the Fr\'echet derivative of $\Pi_N\tilde{\L_\eps}$, namely:
\begin{eqnarray} 
&&\nonumber D_f\Pi_N\tilde{\mathcal L}_\eps(e)=\Pi_N\left(\L_{T_{\eps,f}}e -\eps\L_{I_f}\left[\left(\frac{\L_{T}(f)\cdot G(e)}{|I_{f}'|}\right)'\right]\right)\\
\label{eq:derivexpanded}&&=\Pi_N\L_{T_{\eps,f}}e -\eps \Pi_N\L_{I_f}\left[\frac{\L_{T}(f)'\cdot G(e)}{I_{f}'} + \frac{\L_{T}(f)\cdot G(e)'}{I_{f}'} - \frac{\L_{T}(f)\cdot G(e)\cdot I''_f}{(I_{f}')^2}\right].
\end{eqnarray}
Thus, the main numerical tasks are estimating each of the terms in \eqref{eq:derivexpanded}.

{To implement the operator $D_f\Pi_N\tilde{\mathcal L}_\eps$  numerically we first need to approximate it by 
\begin{align*}
    &M_{N,f,\eps}:= \Pi_N\L_{T_{\eps,f}}e\\
    &-\eps \Pi_N\L_{I_f}\left[\frac{\Pi_N(\L_{T}(f))'\cdot \Pi_N G(e)}{\Pi_N I'_f} + \frac{\Pi_N(\L_{T}(f))\cdot (\Pi_NG(e))'}{\Pi_N I_{f}'} - \frac{\Pi_N\L_{T}(f)\cdot \Pi_NG(e)\cdot (\Pi_N I'_f)'}{(\Pi_N I_{f}')^2}\right].\nonumber
\end{align*}
We first approximate the error made when working with $M_{N,f,\eps}$ instead of $D_f\Pi_N\tilde{\mathcal L}_\eps$.
We compare $D_f\Pi_N\tilde{\mathcal L}_\eps$ and $M_{N,f,\eps}$ on $L(F_N,F_N)$, for $f\in B_K$ and $e\in F_N$ with $\|e\|\leq 1$.
\begin{align*}
    &M_{N,f,\eps}(e)-D_f\Pi_N\tilde{\mathcal L}_\eps(e)\\
    &\hskip 0.5cm=\eps \Pi_N\L_{I_f}\left[\frac{\Pi_N(\L_{T}(f))'\cdot \Pi_N  G(e)}{\Pi_N I'_f}-\frac{\L_{T}(f)'\cdot G(e)}{I_{f}'} + \frac{\Pi_N(\L_{T}(f))\cdot (\Pi_NG(e))'}{\Pi_N I_{f}'}\right.\\
    &\hskip 0.75cm\left. - \frac{\L_{T}(f)\cdot G(e)'}{I_{f}'} -  \frac{\Pi_N\L_{T}(f)\cdot \Pi_NG(e)\cdot (\Pi_N I'_f)'}{(\Pi_N I_{f}')^2}+\frac{\L_{T}(f)\cdot G(e)\cdot I''_f}{(I_{f}')^2}\right]\\
    &\hskip 0.5cm=\eps \Pi_N\L_{I_f}\left[ (I)+(II)+(III) \right].
\end{align*}
Note that $(I), (II), (III)$ can be easily estimated using Lemma \ref{lemkern}. We provide a detailed estimate for $(I)$. The estimates for $(II)$ and $(III)$ obtained similarly. Below we repeatedly use $B_K$ is invariant under $\L_T$, and therefore $\|\L_T(f)'\|_{W^{1,1}}\le K$.
\begin{align*}
  &\|(I)\|_{L^1}\leq  \left\| \frac{\Pi_N(\L_{T}(f))' \cdot \Pi_N G(e)}{\Pi_N I'_f}-\frac{\L_{T}(f)'\cdot G(e)}{I_{f}'}\right\|_{L^1}\\
  \leq & \left\| \frac{(\Pi_N(\L_{T}(f))'-\L_{T}(f)') \cdot \Pi_NG(e)}{\Pi_N I'_f}+\frac{\L_{T}(f)'\cdot (\Pi_N-I)G(e)}{\Pi_NI_{f}'}\right.\left.+\L_{T}(f)'\cdot G(e) \left(\frac{1}{\Pi_NI_{f}'}-\frac{1}{I_{f}'}\right)\right\|_{L^1}\\
  \leq & \left\| \frac{(\Pi_N(\L_{T}(f))-\L_{T}(f))' \cdot \Pi_NG(e)}{\Pi_N I'_f}\right\|_{L^1}\\
  &+\left\|\frac{\L_{T}(f)'\cdot (\Pi_N-I)G(e)}{\Pi_NI_{f}'}\right\|_{L^1}+\left\|\L_{T}(f)'\cdot G(e) \left(\frac{1}{\Pi_NI_{f}'}-\frac{1}{I_{f}'}\right)\right\|_{L^1}\\
  \leq & 2K\eps \frac{\|g\|_{C^1}\|e\|_{L^1}}{1-\eps\|g\|_{C^2}\|e\|_{L^1}} O(\frac{\ln(N)}{N})+ K\eps \|g\|_{C^2}\|e\|_{L^1}\left\| \frac{(\Pi_N-I)I_{f}'}{(\Pi_NI_{f}')(I_{f}')}\right\|_{L^1}\\
\leq & 2K \eps\frac{\|g\|_{C^1}\|e\|_{L^1}}{1-\eps\|g\|_{C^2}\|f\|_{L^1}} O(\frac{\ln(N)}{N})+ \frac{K \eps\|g\|_{C^2}\|e\|_{L^1}(1+\eps\|g\|_{C^2}\|f\|_{L^1})}{(1-\eps \|g\|_{C^2}\|f\|_{L^1})^2}O(\frac{\ln(N)}{N})\\
= &O\left(\frac{\ln(N)}{N}\right)\|e\|_{L^1},
\end{align*}
where the third-last line uses a similar bound as Equation \eqref{eq:boundnumif}. 
Therefore, 
\begin{align}\label{eq:Mdiffdif}
    \| M_{N,f,\eps}e-D_f\Pi_N\tilde{\mathcal L}_\eps e\|_{L^1}=O\left(\frac{\ln(N)}{N}\right)\|e\|_{L^1}.
\end{align}
Finally, we define $D_{N,f}$, which in comparison to $M_{N,f,\eps}$ has additional $\Pi_N$ on each of the inner square bracket terms to truncate any modes higher than order $N$ obtained through multiplication or division.
\begin{align*}
   D_{N,f}e :=& \Pi_N\L_{T_{\eps,f}}e-
    \eps \Pi_N\L_{I_f}\left[\Pi_N\left[\frac{\Pi_N(\L_{T}(f))' \cdot \Pi_NG(e)}{\Pi_N I'_f}\right]\right.\\
    &\left.+ \Pi_N\left[\frac{\Pi_N(\L_{T}(f))\cdot (\Pi_NG(e))'}{\Pi_N I_{f}'}\right] - \Pi_N\left[\frac{\Pi_N\L_{T}(f)\cdot \Pi_NG(e)\cdot (\Pi_N I'_f)'}{(\Pi_N I_{f}')^2}\right]\right].\nonumber
\end{align*}
We now show that running Newton's method with $D_{N,f}$ instead of the true derivative $D_f\tilde{\mathcal{L}}_{\eps,N}$ still provides an exponential rate of convergence to the fixed point of $\tilde{\mathcal{L}}_{\eps,N}$.
\begin{proposition}
For $\eps\in (0,\eps_6^*)$ and $N\in \mathbb N$ large enough, the sequence $(h^n)_{n\in \mathbb N}\subset B_K\cap F_N$ computed through the Newton iteration
\begin{align*}
    h^{n+1}=h^n-(I-D_{N,h^n})^{-1}(I-\tilde \L_{\eps,N})h^n,
\end{align*}
 converges at exponential rate for $h^0$ close enough to $h_N^*$.
\end{proposition}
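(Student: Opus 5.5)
Our plan is to treat the iteration as an inexact (quasi-)Newton method, a perturbation of the exact Newton scheme of Theorem \ref{newtonthm}. Throughout we write $A_f:=I-D_f\tilde\L_{\eps,N}$ and $\tilde A_f:=I-D_{N,f}$, both restricted to $F_N\cap V^{1,1}$. The Newton increment always lies in $V^{1,1}$, because every term of $\tilde\L_{\eps,N}h-h$ and of $D_{N,f}e$ has zero mean, so working on this subspace is enough. With $e^n:=h^n-h_N^*$ and $\mathcal N h_N^*=0$, the error recursion is
\begin{align*}
e^{n+1}=\tilde A_{h^n}^{-1}\Big[(\tilde A_{h^n}-A_{h^n})e^n-\big(\mathcal N(h_N^*+e^n)-\mathcal N(h_N^*)-A_{h^n}e^n\big)\Big].
\end{align*}
It suffices to show that $\|e^{n+1}\|_{W^{1,1}}\le (c_N+C\|e^n\|_{W^{1,1}})\|e^n\|_{W^{1,1}}$ with $c_N\to0$ as $N\to\infty$. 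Once that holds, we fix $N$ so that $c_N\le 1/4$ and restrict to $\|e^0\|_{W^{1,1}}\le 1/(4C)$. Induction then gives $\|e^n\|_{W^{1,1}}\le 2^{-n}\|e^0\|_{W^{1,1}}$, which is exponential convergence, and the iterates stay in a small ball around $h_N^*$, hence in $B_{\tilde K}$ as in the proof of Theorem \ref{newtonthm}.

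The second bracket is handled exactly as in Theorem \ref{newtonthm}: writing $\mathcal N(h_N^*+e)-\mathcal N(h_N^*)$ as an integral of derivatives and using Lemma \ref{lemcontinuityderiv}, it is $O(\mathcal C_N\|e^n\|^2_{W^{1,1}})$. For the first bracket, we split $D_{N,f}-D_f\tilde\L_{\eps,N}=(D_{N,f}-M_{N,f,\eps})+(M_{N,f,\eps}-D_f\tilde\L_{\eps,N})$. The second piece is bounded by \eqref{eq:Mdiffdif}. For the first piece, each inserted $\Pi_N$ acts on a product or quotient of functions that are uniformly bounded in $W^{2,1}$ for $f\in B_K$ (by Lemma \ref{lem:boudifprimeinv}, Lemma \ref{lem:product}, \eqref{eq:boundG} and $\L_T$-invariance of $B_K$). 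Lemma \ref{lemkern}(2) therefore gives an $O(\ln N/N)\|e\|_{L^1}$ bound, and Lemma \ref{lem:LIfcon} together with Lemma \ref{lemkern}(1) transports it through $\Pi_N\L_{I_f}$.

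These two estimates are in $L^1$, whereas we need $W^{1,1}$. On $F_N$ we convert using the derivative identity $(\Pi_N u)'=\Pi_N u'$ and one more order of Lemma \ref{lemkern}, which requires $W^{3,1}$ bounds on the same ingredients. These follow from $g\in C^5$ and from Lemma \ref{lem:discLY_co}, which gives $B_K$-type bounds in $W^{3,1}$ for the relevant $f$. The result is $\|D_{N,f}-D_f\tilde\L_{\eps,N}\|_{W^{1,1}\to W^{1,1}}\le \eps\,O(\ln N/N)$, uniformly over $f\in B_{\tilde K}\cap F_N$ with $\tilde K$ from Theorem \ref{newtonthm}.

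For the resolvent factor, Proposition \ref{prop:4.4} gives $\|A_f^{-1}\|\le C_{7,N}$. A Neumann series then yields $\|\tilde A_f^{-1}\|\le 2C_{7,N}$ whenever $C_{7,N}\|D_{N,f}-D_f\tilde\L_{\eps,N}\|\le 1/2$. Combining everything, we get $c_N=2C_{7,N}\,\eps\,O(\ln N/N)$.

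The main obstacle is that $C_{7,N}$, and any norm-equivalence constant we use, depend on $N$, so $c_N\to0$ is not automatic. I would try first to prove a bound on $\|A_f^{-1}\|$ that is uniform in $N$. To do this, I would rerun the argument of Lemma \ref{lem:3.3} quantitatively: the uniform spectral gap of $\Pi_N\L_{T_{\eps,f}}$ on $V^{1,1}$ (Keller--Liverani, with constants independent of $N$, or directly Lemma \ref{lem:contracto}) gives $\|(I-\Pi_N\L_{T_{\eps,f}})^{-1}\|_{V^{1,1}}\le C/(1-\tilde\theta)$. The term $\eps\tilde D_{N,f,G}$ is small by Lemma \ref{lem:uniform_bound_der}, so a Neumann series gives an $N$-independent bound for small $\eps$. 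Under that bound $c_N=O(\eps\ln N/N)\to0$ and the argument closes. If this fails, the fallback is to ensure the consistency error carries no hidden $N$-growth, so that $c_N=C_{7,N}\,O(\ln N/N)$ can still be made small for $N$ large.
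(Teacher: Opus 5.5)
Your proposal is correct and uses the paper's skeleton. You split $D_{N,f}-D_f\tilde\L_{\eps,N}$ through $M_{N,f,\eps}$ to get an $O(\ln N/N)$ consistency error, add the quadratic remainder controlled by Lemma \ref{lemcontinuityderiv}, and conclude a linear contraction of the error.

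Where you differ is in how the argument is closed. The paper stops at $\|\mathcal N(h_N^*+e^n)-\mathcal N(h_N^*)-(I-D_{N,f})e^n\|_{L^1}\le\gamma'\|e^n\|_{W^{1,1}}$ and invokes Part 1 of Theorem 2.9 in \cite{hinze2008optimization}. That leaves implicit the bound on $(I-D_{N,h^n})^{-1}$ relative to which $\gamma'$ must be small. Proposition \ref{prop:4.4} only gives an $N$-dependent $C_{7,N}$, obtained by compactness, and the two sides of the paper's estimate are in different norms.

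You write out the error recursion yourself and supply both missing ingredients:
\begin{itemize}
\item a consistency bound in the $W^{1,1}\to W^{1,1}$ operator norm, obtained by spending one extra derivative in Lemma \ref{lemkern};
\item an $N$-uniform bound on $(I-D_f\tilde\L_{\eps,N})^{-1}$ on $F_N\cap V^{1,1}$. This is a quantitative form of Lemma \ref{lem:3.3}: Lemma \ref{lem:contracto} with a constant sequence gives $\|(I-\Pi_N\L_{T_{\eps,f}})^{-1}\|_{V^{1,1}}\le C/(1-\tilde\theta)$ uniformly in $N\ge N_0$. Lemma \ref{lem:uniform_bound_der} then makes $\eps\tilde D_{N,f,G}$ a Neumann-small perturbation, under essentially the smallness condition used to define $\eps_6^*$.
\end{itemize}

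Both ingredients are genuinely needed. On $F_N$, Bernstein's inequality costs a factor of order $N$ when passing from $L^1$ to $W^{1,1}$. So the paper's $L^1$ consistency rate $\ln N/N$ would only yield an $O(\ln N)$ bound in $W^{1,1}$. Likewise, an inverse bound that might grow with $N$ would not give $c_N\to0$. Keep your primary route: the fallback you describe would fail if $C_{7,N}$ grew faster than $N/\ln N$.

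One claim needs tightening. The $W^{1,1}$ consistency bound is not uniform over $f\in B_{\tilde K}\cap F_N$: it requires $\|\L_Tf\|_{W^{3,1}}$ to be bounded, and a $W^{2,1}$ ball does not control this. It does hold along the iterates, for two reasons:
\begin{itemize}
\item Lemma \ref{lem:discLY_co} with $i=3$, applied at the fixed point, gives $\|h_N^*\|_{W^{3,1}}\le C_{LY}K/(1-\lambda^3)$ uniformly in $N$;
\item on $F_N$ one has $\|e\|_{W^{3,1}}\lesssim N^2\|e\|_{W^{1,1}}$.
\end{itemize}
So shrinking the initial neighbourhood by an $N$-dependent factor, which the statement permits, gives the needed $W^{3,1}$ control. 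The same shrinking keeps the iterates in a $W^{2,1}$ ball of $N$-independent radius, such as $K+1$, which is what your uniform resolvent bound requires.
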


\begin{proof}
A straightforward application of Lemma \ref{lemkern} and Lemma \ref{lem:product} implies that for all $f\in B_K\cap F_N$ and $e\in F_N$, 
\begin{align*}
    \|D_{N,f}e- M_{N,f,\eps}e\|_{L^1}&\leq \|\eps \Pi_N\L_{I_f}\left[(\Pi_N-I)\left[\left[\frac{\Pi_N(\L_{T}(f))' \cdot \Pi_NG(e)}{\Pi_N I'_f}\right]\right.\right.\\
    &\left.\left.+ \Pi_N\left[\frac{\Pi_N(\L_{T}(f))\cdot (\Pi_NG(e))'}{\Pi_N I_{f}'}\right] - \Pi_N\left[\frac{\Pi_N\L_{T}(f)\cdot \Pi_NG(e)\cdot (\Pi_N I'_f)'}{(\Pi_N I_{f}')^2}\right]\right]\right]\|_{L^1}\\
    =&\eps C \cdot O\left(\frac{\ln(N)}{N}\right) \left[ \frac{K}{1-\eps\|g\|_{C^2}\|f\|_{L^1}}\|G(e)\|_{W^{1,1}}\right.\\
    &\left.+ \frac{K}{1-\eps\|g\|_{C^2}\|f\|_{L^1}}\|\Pi_N(G(e))'\|_{W^{1,1}}+\frac{K(1+\eps\|g\|_{C^3}\|f\|_{L^1})}{(1-\eps\|g\|_{C^2}\|f\|_{L^1})^2}\|G(e)\|_{W^{1,1}}\right]\\
    = &O\left(\frac{\ln(N)}{N}\right)3\eps C \frac{K(1+\eps\|g\|_{C^3}\|f\|_{L^1})}{(1-\eps\|g\|_{C^2}\|f\|_{L^1})^2}\|g\|_{C^2}\|e\|_{L^1}\\
    =&O\left(\frac{\ln(N)}{N}\right)\|e\|_{L^1},
\end{align*}
where we used similar bound as in Equation \eqref{eq:boundnumif} in the third to last line.
and thus using the above along with inequality \eqref{eq:Mdiffdif}, for all $f\in B_K\cap F_N$ and $e\in F_N$,
\begin{align*}
    \| D_f\Pi_N\tilde{\mathcal L}_\eps e-D_{N,f}e\|_{L^1}=O\left(\frac{\ln(N)}{N}\right)\|e\|_{L^1}.
\end{align*}

 This combined with equation \eqref{eq:Frechetcondition} with norm $\|\cdot\|$ standing for $\|\cdot\|_{W^{1,1}}$ from Section \ref{sec:newtheory}, which is guaranteed by equation \eqref{eq:Frechetderivreg} (which is itself implied by Lemma \ref{lemcontinuityderiv}), 
  implies for $N$ large enough and $e^n\in F_N$ small enough (depending on $N$), there exists $\gamma'\in(0,1)$ such that,
\begin{align*}
     \|\mathcal{N}(h^*_N+e^n)-\mathcal{N}(h^*_N)-(I-D_{N,f})e\|_{L^1}\le& \|\mathcal{N}(h^*_N+e^n)-\mathcal{N}(h^*_N)-D_{h^*_N+e^n}\mathcal{N}(e^n)\|_{W^{1,1}}\\
     &+  \|(I-D_{N,f})e^n- D_{h^*_N+e^n}\mathcal{N}(e^n)\|_{L^1}\\
     \leq &C_N\|e^n\|_{W^{1,1}}^{2}+O\left(\frac{\ln(N)}{N}\right)\|e^n\|_{L^1}\\
    \leq &C_NC_N'\|e^n\|_{L^1}^{2}+O\left(\frac{\ln(N)}{N}\right)\|e^n\|_{L^1}\\
     \leq &\gamma'\|e^n\|_{W^{1,1}},
\end{align*}
where $C_N'$ stands for the equivalence bound between $\|\cdot\|_{L^1}$ and $\|\cdot\|_{W^{1,1}}$ on $F_N$ that is the element $C_N'$ such that for any $e\in F_N$, $\|e\|_{W^{1,1}}\leq C_N'\|e\|_{L^1}$.
Consequently, according to Section 2.4.2 from \cite{hinze2008optimization}, in particular Part $1$ of Theorem 2.9 \cite{hinze2008optimization} and the comment immediately following the proof of Theorem 2.9, the current Newton's method with our current estimate $D_{N,f}$ converges at exponential speed toward $h_N^*$.
\end{proof}

Referring to \eqref{eq:derivexpanded}, if one were to use the less natural alternative coupling $T\circ I_f$ instead of $I_f\circ T$ we use, one would remove $\L_T$ inside the square brackets and replace $\L_{I_f}$ with $\L_{T_f}$. 
In principle one could then evaluate the resulting terms in the square brackets in \eqref{eq:derivexpanded} to an ``arbitrary'' precision (up to a tolerance larger than machine precision) with numerical quadrature instead of FFT. 
Alternatively, if $g$ has Fourier modes restricted to order $N$ or less, then one could exactly evaluate the objects in the square brackets with FFT (again up to FFT tolerance).
Either option would then eliminate the need to consider $D_{N,f}$ and the above associated error estimates, and instead require keeping track of numerical quadrature or FFT tolerances.

}

\subsection{Numerical construction of $\Pi_N\circ\L_{T}$}
\label{sec:numLT}
A basic building block for the numerical implementation is the construction of $\Pi_N\circ\L_{T}$, acting on the Fourier basis $F_N:=\{\exp(2\pi i k x): k\in\{-N +1,\ldots,-1,0,1,\ldots,N\}$.
An almost identical construction will be used to form $\Pi_N\circ\L_{T_{\eps,f}}$ and $\Pi_N\circ\L_{I_f}$.
We denote $e_k(x)=\exp(2\pi i k x)$ and write
 $h=\sum_{k=-N+1}^N \hat h(k) e_k$.
Whenever we compute a Fourier transform, it is carried out by fast Fourier transform (FFT) \cite{FFTW} on a grid on $S^1$ of size $16N$ to ensure satisfactory accuracy of the Fourier integrals to order $N$.
Whenever we revert from frequency space back to physical space on $S^1$ we construct a sum as above for $h$, which is typically then sampled on the $16N$ points.

We denote by $\hat L_N$ the action on the Fourier coefficients of $h$ induced by the action of $\Pi_N\L_{T}$ on $h$.
Recalling the definition of $K_N$ from \S\ref{sec:disc} one has
\begin{eqnarray}
\nonumber    \widehat{\left((\Pi_N\circ\L_{T})h\right)}(k) &=& \langle \Pi_N\circ(\L_{T}h), e_k\rangle \\
\nonumber     &=& \hat{K}_N(k)\cdot\langle h, e_k\circ T\rangle\\
  \nonumber   &=&\hat{K}_N(k)\sum_{i=-N+1}^{N} \hat h(i)\langle e_i, e_k\circ T\rangle\\
  \nonumber      &=&\hat{K}_N(k)\sum_{i=-N+1}^{N} \hat h(i)\langle e_{-k}\circ T, e_{-i}\rangle\\
\nonumber   &=&\hat{K}_N(k)\sum_{i=-N+1}^{N} \hat h(i)\cdot \widehat{(e_{-k}\circ T)}(-i)\\
\label{eq:TOdiscfinal}    &=&\sum_{i=-N+1}^{N} \underbrace{\hat{K}_N(k)\cdot \widehat{(e_{-k}\circ T)}(-i)}_{=:\hat L_{N,ki}}\cdot\hat h(i).
\end{eqnarray}
This construction requires $2N$ FFTs of $e_k\circ T$, for $k=-N+1,\ldots,N$.
Note that we have an explicit expression for $\hat{K}_N(k)$.

In our numerical experiments in Section \ref{sec:examples}, both our sequential iteration scheme and Newton iteration scheme will be initialised at $h_{0,N}^*$, whose Fourier coefficients are computed as the leading right eigenvector of the $2N\times 2N$ matrix $\hat{L}_N$ defined in \eqref{eq:TOdiscfinal}.

\subsection{Numerical construction of $\Pi_N\circ\L_{T_{\eps,f}}$}
\label{sec:LTf}

To obtain an estimate of $\Pi_N\L_{T_{\eps,f}}$ we follow the construction above for $\Pi_N\L_{T}$, replacing $T$ with an estimate of $T_{\eps,f}$.
Note that $T_{\eps,f}$ will arise from numerical estimates of $f$, which live in the span of $F_N$.
To simplify the numerical computations, we assume that the coupling function is of the form $g(x,y)=g(x-y)$.
Using \eqref{eq:couplingmap}, \eqref{eq:coupleddy}, and the form of $g$, we have $T_{\eps,f} = T + \eps((g\circ T)\ast f)$, where $\ast$ denotes convolution.
To compute $(g\circ T)\ast f$ we calculate $\widehat{(g\circ T)\ast f}=\widehat{(g\circ T)}\cdot\hat{f}$. 
We then estimate 
\begin{equation}
    \label{eq:Tfest}
T_{\eps,f}(x)\approx T(x) + \eps\sum_{k=-N+1}^{N}\left(\widehat{(g\circ T)}(k)\cdot\hat{f}(k)\right)\cdot e_k(x).
\end{equation}
The calculation of $\widehat{g\circ T}$ only has to be done once and is fixed forever, and at a given Newton iteration, we already have $\hat{f}$ for the current $f$ in the iteration.
We now insert this estimate for $T_{\eps,f}$ 
into \eqref{eq:TOdiscfinal} to compute $\widehat{e_{-k}\circ T_{\eps,f}}$ for $k=-N+1,\ldots, N$.
Thus, every computation of $\Pi_N\circ\L_{T_{\eps,f}}$ requires $2N$ FFTs.

\subsection{Numerical construction of $\Pi_N\circ\L_{I_f}$}

We again follow the construction above for $\Pi_N\L_{T}$, replacing $T$ with $I_f$.
We have $I_f = \mathrm{Id} + \eps(g\ast f)$, so \eqref{eq:Tfest} becomes
\begin{equation}
    \label{eq:Ifest}
I_{f}(x)\approx x + \eps\sum_{k=-N+1}^{N}\hat{g}(k)\cdot\hat{f}(k)\cdot e_k(x).
\end{equation}
The computation of $\hat g$ can be done once and is fixed forever.
We now insert this estimate for $I_{f}$ 
into \eqref{eq:TOdiscfinal} to compute $\widehat{e_{-k}\circ I_{f}}$ for $k=-N+1,\ldots, N$.
Thus, every computation of $\Pi_N\circ\L_{I_{f}}$ requires $2N$ FFTs.

\subsection{Computation of the terms in the Fr\'echet derivative}

Referring to \eqref{eq:derivexpanded}, we have already described how to estimate $\Pi_N\L_{T_{\eps,f}}, \Pi_N\L_{I_f}$, and $\Pi_N\L_T$.
In this section we detail how we estimate the derivative of $\Pi_N\L(f)$, the derivatives of $I_f$, and the functions $G(e_k)$ and their derivatives.

\subsubsection{Derivative of $\Pi_N\L_T(f)$}
The term $\Pi_N\L_T(f)'$ is estimated by first computing $\Pi_N\L_T(f)$, which amounts to multiplying the matrix $\hat{L}_N$ by the Fourier coefficients of $f$.
For $f\in F_N$, this yields $(\Pi_N\L_T)(f)=\sum_{k=-N+1}^{N} (\hat{L}_N\hat f)_k\cdot e_k$.
To take the spatial derivative we write $$(\Pi_N\L_T)(f)'=\sum_{k=-N+1}^{N} (2\pi k i)(\hat{L}_N\hat f)_k\cdot e_k.$$

\subsubsection{Derivatives of $I_f$}
The derivatives of $I_f$ are treated as follows.
One has $I'_f=1+\eps(g'\ast f)$. Thus $\widehat{I'_f}(k)=\delta_{0k} + \eps \cdot (2\pi ik)\hat{g}(k)\cdot \hat{f}(k)$. 
The approximate spatial representation of $I'_f$ is then assembled as
$I'_f(x)\approx 1+\sum_{k=-N+1}^{N} \widehat{I'_f}(k)e_k(x)$.

Similarly, $I''_f=\eps(g''\ast f)$ and so $\widehat{I''_f}(k)=\eps \cdot (2\pi ik)^2\hat{g}(k)\cdot \hat{f}(k)$. 
The approximate spatial representation of $I''_f$ is assembled as above.

\subsubsection{$G(e_k)$ and its derivative}

In the Newton iteration we require an estimate of the operator $\Pi_N\circ D\tilde\L|_f$, represented in the basis $F_N$.
Thus we require the action of $G(e)(x)=\int g(x-y)e(y)\ dy$ in the RHS of \eqref{eq:derivexpanded} on each $e\in F_N$, and in particular on each basis element $e_k$.
Because we require estimates of $G(e)$ only for $e=e_k$, the terms $g\ast e_k$ can be dealt with as follows:  $\widehat{g\ast e_k}(j)=\hat g(j)\cdot \hat e_k(j)=\hat g(j)\cdot \delta_{kj}$. We may Fourier transform $g$ once and store it (in fact, this transform has already been computed for creating $\Pi_N\L_{I_f}$, see \eqref{eq:Ifest}).
We recover $G(e_k)\approx \sum_{j=-N+1}^{N} \hat g(j)\delta_{kj}e_k = \hat g(k)e_k.$
The spatial derivative $G(e_k)'$ is thus estimated as $2\pi ki \cdot \hat g(k)e_k$.

\subsection{Combining the terms}
\label{sec:combining}
Having constructed spatial estimates of all of the individual terms in the square brackets of \eqref{eq:derivexpanded} for a given $k$ (recall we estimate $G(e_k)$ one $k$ at a time), one simply multiplies/divides/adds/subtracts as appropriate to estimate the combination in the square brackets, let us temporarily call it $\mathrm{Comb}_{N,k}$.
We now need to return to frequency space, so we Fourier transform this combination
and calculate (as per \eqref{eq:derivexpanded} in frequency space)
\begin{equation}
\widehat{D_f\Pi_N\tilde\L_{\eps}(e_k)}=\widehat{\Pi_N\L_{T_{\eps,f}}(e_k)}-\eps\widehat{\Pi_N\L_{I_f}}(\widehat{\mathrm{Comb}_{N,k}}).\nonumber
\end{equation}
For each $k=\{-N+1,\ldots,N\}$ we place the resulting $2N$-vector in the $k+N$th column of a $2N\times 2N$ matrix called $\hat D_{N,f}$, representing the Fr\'echet derivative in frequency space.

\subsection{Newton iteration}\label{subsecnewtfourier}

We are now ready to implement the Newton iteration in frequency space.
We initialise Newton with $\hat{h}_{0,N}^*$, obtained as described at the end of Section \ref{sec:numLT}.
Because ${h}_{0,N}^*$ is a density, we know that $\hat{h}_{0,N}^*(0)=1$.
Further, since we wish to remain in the class of densities, it will be convenient to restrict the Newton iteration to the subspace $\bar{F}_N=\{e_k: k\in \{-N+1,\ldots,-2,-1,1,2,\ldots, N\}\}\subset F_N$ spanned by nonconstant Fourier modes, or equivalently nonzero frequencies.

Our Newton update is:
\begin{equation}
    \label{eq:newtonupdate2}
    \hat{h}_N^{n+1} := \hat{h}_N^n - (I-\hat{D}_{N,h_N^n})^{-1}\cdot (\hat h_N^{n} - \hat{L}_{N,h_N^n}\hat h_N^n),
\end{equation}
where $\hat D_{N,h_N^n}$ is constructed as in Section \ref{sec:combining} using $f=h_N^n$, and $\hat L_{N,h_N^n}$ is constructed as in Section \ref{sec:LTf} using $f=h_N^n$.
Equation \eqref{eq:newtonupdate2} is to be understood as computed over the nonzero frequencies.
While in theory, the constant mode of $\hat h_N^{n} - \hat{L}_{N,h_N^n}\hat h_N^n$ should be zero, because of small machine-precision errors, it is more accurate to simply do the computation in \eqref{eq:newtonupdate2} over the nonzero frequencies.

\subsection{Computational cost}
At each Newton iteration, we have a new fixed-point estimate $h^n_N$ and require new estimates of $\L_{T_{\eps,h^n_N}}$ and $\L_{I_{h^n_N}}$, costing $2N$ FFTs each over a finer spatial grid of size $16N$.
The Fr\'echet derivative also changes at each iteration and the $2N\times 2N$ matrix $I-\hat D_{N,h^n_N}$ must be inverted.
There are also some reconstruction costs when converting from frequency space back to physical space.
FFTs are generally extremely fast to evaluate and at modest values of $N$ around 100, inverting $I-\hat D_{N,h^n_N}$ is very cheap.
In contrast, sequential iteration requires half the number of FFTs per iteration because one only constructs a single approximate operator $\L_{T_{\eps,h^n_N}}$, namely $\hat L_{N,h^n_N}$, to apply to the current fixed-point estimate.

In terms of advantages of Newton iteration over sequential iteration, we note that sequential iteration can only find attracting fixed points while Newton iteration is also able to find repelling fixed points.
As we show in the next section, the modest per-iteration computational cost increase of Newton,  relative to sequential iteration, is offset by a significantly faster convergence rate.
This greatly reduces the total number of iterations required and leads to an overall computational cost improvement with Newton.

\section{Examples}
\label{sec:examples}

Consider the uncoupled map $T_0:S^1\to S^1$ defined by $T_0(x)=2x-(a/2\pi)\sin(2\pi x)\pmod 1$, with $a=0.9$.
This two-branched map of the circle has relatively low derivative $T_0'(0)=1.1$ at $x=0$, leading to trajectories spending a greater amount of time near the ``sticky'' fixed point $x=0$.
The invariant density of $T_0$ describes the long-term distribution of trajectories in $S^1$ and is shown as a dashed line in Figure \ref{fig:translation}(centre), with a clear peak at $x=0$.
This invariant density has been estimated as the fixed point $h_{0,N}^*$ of $\Pi_N\mathcal{L}_{T}$ and computed as the leading eigenvector of $\Pi_N\mathcal{L}_T$ with $N=256$.

Let $b:S^1\to [0,1]$ be a $C^\infty$ bump function given by 
$$b(x)=\begin{cases}\exp(1)\cdot\exp\left(\frac{1}{\left(\left(x - \frac{1}{2}\right) / \delta\right)^2 - 1}\right)&\mbox{if $\left|x-\frac{1}{2}\right|\le\delta$,}\\
0 &\mbox{otherwise.}\end{cases}$$
with $\delta=0.45$.
We investigate two kernels $g$ of the form $g(x,y)=g(x-y)$, where (i) $g(x)=b(x)$ and (ii) $g(x)=-b'(x)/5$;  we apply the factor 1/5 to the latter kernel so that its range lies approximately in $[-1,1]$.
\begin{figure}
    \centering
    \includegraphics[width=\linewidth]{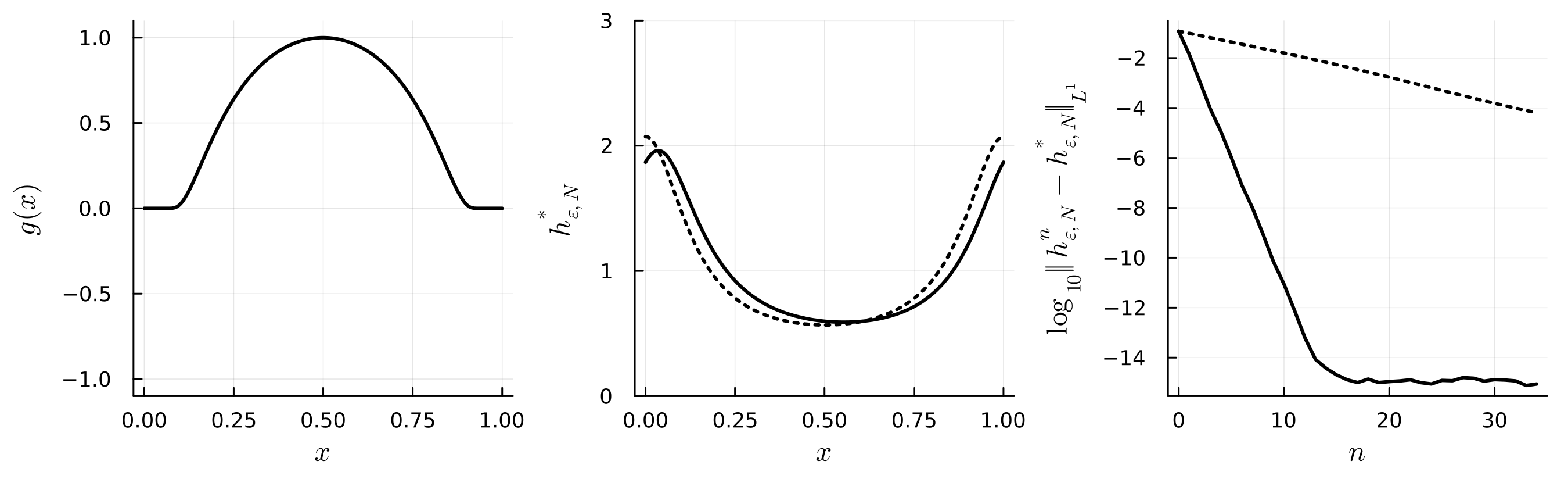}
    \caption{Left: graph of kernel $g$ representing coupling whose effect is a non-local translation to the right.  Centre: fixed point $h_{0,N}^*$ of the uncoupled system (dashed) and fixed point $h_{\eps,N}^*$ of the coupled system (solid). Right: $L^1$ error $\|h^n_{\eps,N}-h^*_{\eps,N}\|_{L^1}$ in $\log10$ scale vs iteration count $n$ for sequential iteration (dashed) and Newton iteration (solid), both initialised with $h_{0,N}^*$}
    \label{fig:translation}
\end{figure}
\begin{figure}
    \centering
    \includegraphics[width=\linewidth]{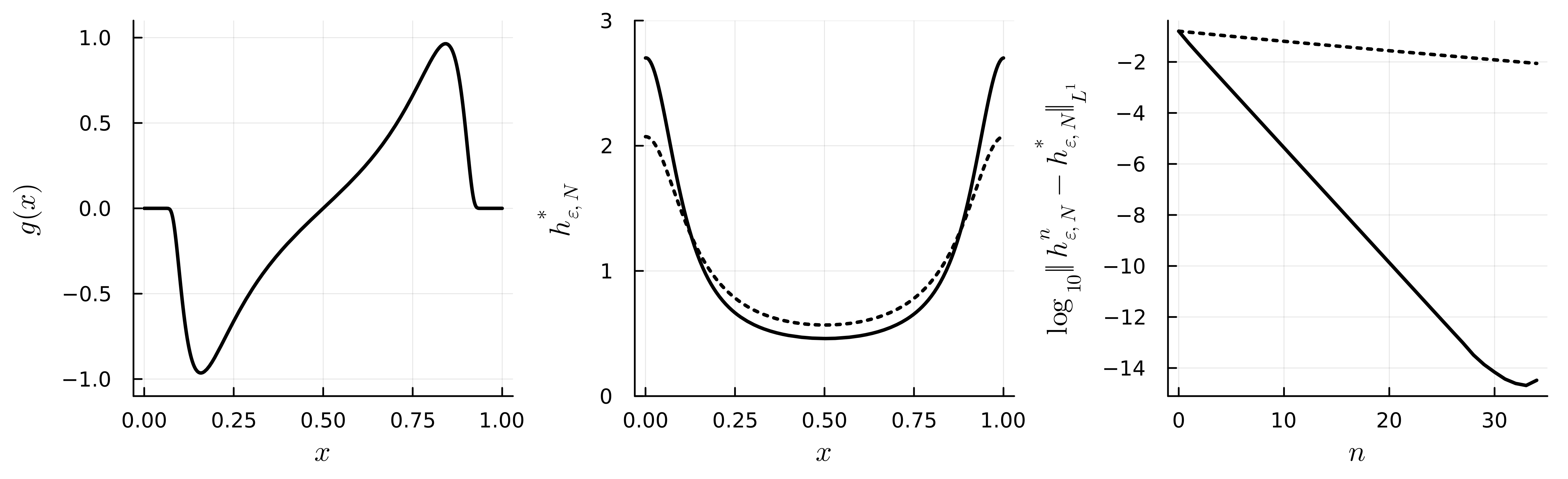}
    \caption{Left: graph of kernel $g$ representing coupling whose effect is non-local attraction.  Centre: fixed point $h_{0,N}^*$ of the uncoupled system (dashed) and fixed point $h_{\eps,N}^*$ of the coupled system (solid). Right: $L^1$ error $\|h^n_{\eps,N}-h^*_{\eps,N}\|_{L^1}$ in $\log10$ scale vs iteration count $n$ for sequential iteration (dashed) and Newton iteration (solid), both initialised with $h_{0,N}^*$.}
    \label{fig:attraction}
\end{figure}

Referring to \eqref{eq:couplingmap}, the nonnegativity of the kernel $g(x)=b(x)$ in Figure \ref{fig:translation}(left) and nonnegativity of the density $f$ means that this kernel represents a non-local translation to the right, with the peak translation effect occuring by coupling at a distance of 1/2 of the circular domain.
In contrast, the dipole structure of $g(x)=-b'(x)/5$ centred about $x=0$, as seen in Figure \ref{fig:attraction} (left), represents a (symmetric, due to the oddness of $g$) non-local attraction, with the peak coupling effects occurring at a distance of about 0.15.  
For each choice of kernel $g$ we construct the density-dependent coupled map $T_{h,\eps}$ according to \eqref{eq:couplingmap} and \eqref{eq:coupleddy} with  $\varepsilon=0.025$.

We now deploy our two numerical approaches to estimate $h_{\varepsilon}^*$, a fixed point of the nonlinear self-consistent transfer operator $\tilde{\mathcal{L}}_\varepsilon$.
Firstly, we use sequential iteration, initialising with $h_{\varepsilon,N}^0:=h_{0,N}^*$ and pushing forward according to the discrete iteration rule \eqref{eq:discreteiter} for 35 steps.
Secondly, again initialising with $h_{\varepsilon,N}^0:=h_{0,N}^*$, 
 we iterate with Newton scheme \eqref{eq:newtonupdate2} for 35 steps.
In Figure \ref{fig:translation}(centre) we see that the coupled fixed point has moved to the right by an amount roughly 0.05  
The coupled fixed point $h^*_{\eps,N}$ is also slightly smoothed relative to the uncoupled fixed point $h^*_{0,N}$ due to the coupling translating mass from the sticky fixed point at $x=0$.
In Figure \ref{fig:attraction}(centre) we see that the symmetric attractive coupling leads to greater symmetric density concentration near the already sticky fixed point of $T_0$ at $x=0$. 

To compute convergence rates to $h_{\varepsilon,N}^*$ we use the 35-step Newton estimate for $h_{\varepsilon,N}^*$, because the Newton updates are at machine precision.
The error decay $\|h_{\varepsilon,N}^{n}-h_{\varepsilon,N}^{*}\|_{L^1}$ is shown in Figures \ref{fig:translation}(right) and \ref{fig:attraction}(right) for both numerical schemes.
The experiment reported in Figure \ref{fig:attraction} took 20 seconds for sequential iteration and 60 seconds for Newton iteration\footnote{Computations performed on an Intel Core Ultra 7 165U laptop processor running Julia 1.12.4. In the first example using Fourier modes to order $N=64$ is sufficient, twenty iterations of Newton's method on a  takes only a few seconds.}.
Thus, for a three-fold increase in computation time, one achieves a relative error improvement of 14 orders of magnitude.

We can also investigate numerically how the error $\Vert h_{\varepsilon,N}^* - h_{\varepsilon}^*\Vert_{L^1}$ behaves with $N$.
In Figure \ref{fig:Nrate} we estimate $h_{\varepsilon}^*$ with $h_{\varepsilon,2^{10}}^*$ (which uses $>2000$ Fourier modes), computed with 50 Newton iterations, which is more than sufficient to reach machine precision.
We then compute the $L^1$ distances $\|h_{\varepsilon,N}^* - h_{\varepsilon,2^{10}}^*\Vert_{L^1}$ for $N=2^i$, $i=1,2,\ldots,7$, where each $h_{\varepsilon,N}^*$ is again computed with 50 Newton iterations.
In the log-log axes of Figure \ref{fig:Nrate} we see a rate comparable to $O(\log N/N)$.
\begin{figure}[H]
    \centering
    \includegraphics[width=0.5\linewidth]{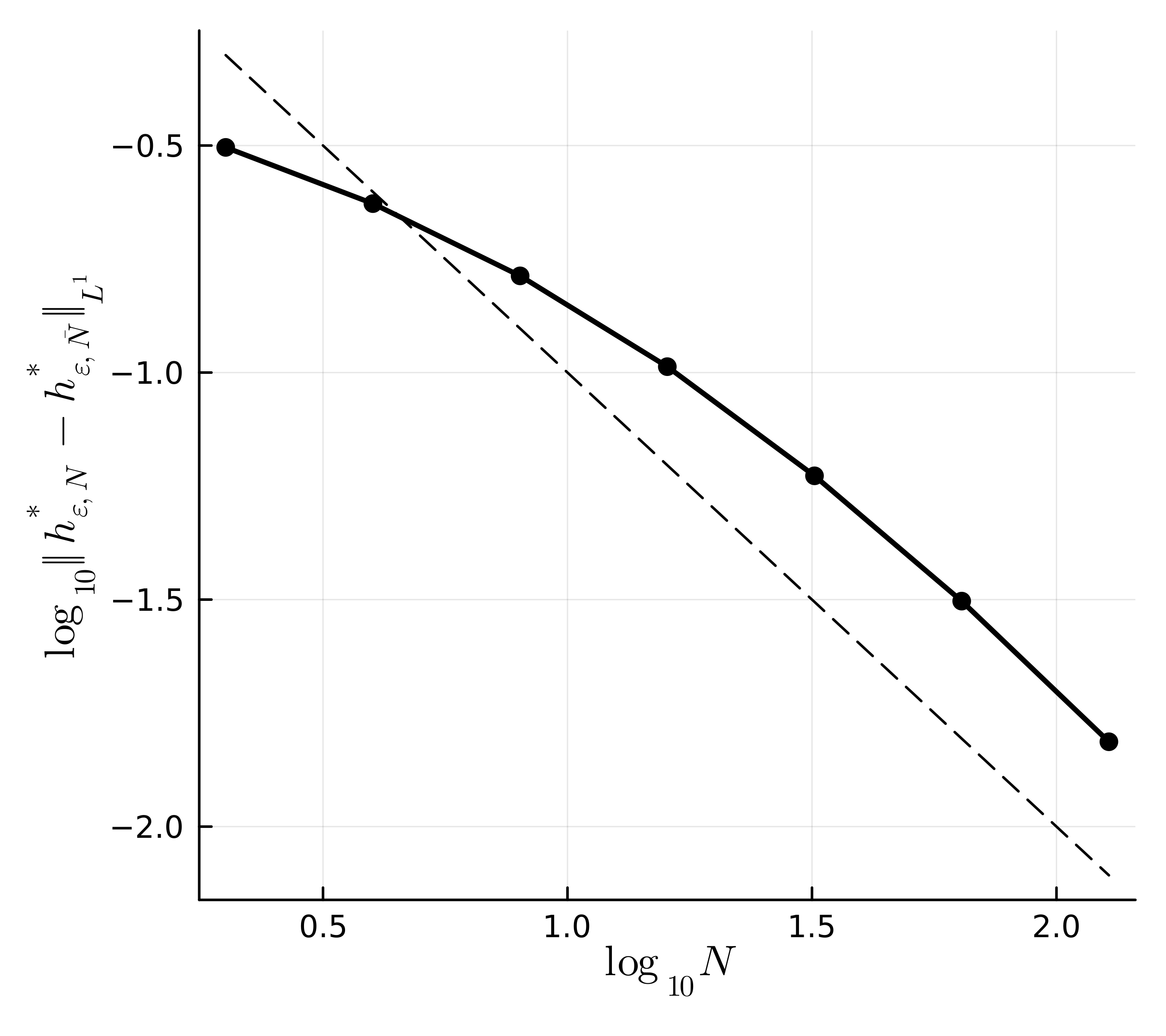}
    \caption{Behaviour of discrete fixed point errors $\|h_{\varepsilon,N}^* - h_{\varepsilon,2^{10}}^*\Vert_{L^1}$ for $N=2^i$, $i=1,2,\ldots,7$. This log-log plot is consistent with approximately $O(\log N/N)$ decay, with $1/N$ decay shown by the dashed line with slope of -1.}
    \label{fig:Nrate}
\end{figure}

\appendix
\section{Results from the literature on self-consistent transfer operators}

\begin{lemma}
    \label{introprop}
The following sequential Lasota-Yorke and continuity inequalities hold\footnote{In the following two inequalities we label different elements of $\mathcal{D}_1$ by a subscript; i.e., $f_j$ for some $j$. This should not be confused with the superscript notation used below in \eqref{eq:evolve}, $h^n$, for a state at time $n$.}: 
\begin{enumerate}
\item $\exists C_{LY}>0$ such that for all $T_{\eps,f_n},\dots,T_{\eps,f_1}\in\cT$, $h\in W^{i,1}$, $i=1,2,3,4$, and $n\in\mathbb N$
\begin{equation}\label{eq:seqLY}
\|\L_{ T_{\eps,f_n}\circ\dots\circ T_{\eps,f_1}} h\|_{W^{i,1}}\leq \lambda^{ni}\|h\|_{W^{i,1}}+ C_{LY}\|h\|_{W^{i-1,1}};
\end{equation}
\item $\exists C>0$ such that for all $T_{\eps,f_2},T_{\eps,f_1}\in\cT$, $f_1,f_2\in W^{i,1}$, $h\in W^{i+1,1}$, $i=0,1$,
\begin{equation}\label{eq:closeop}
\|(\L_{T_{\eps, f_1}}-\L_{T_{\eps,f_2}})h\|_{W^{i,1}} \leq  C\eps \|f_1-f_2\|_{W^{i,1}} \|h\|_{W^{i+1,1}}.
\end{equation}
\end{enumerate}
\end{lemma}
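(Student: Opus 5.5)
The plan is to treat both items as standard consequences of the fact that every map $T_{\eps,f}$ with $f\in\mathcal D_1$ lies in the uniform class $\cT$, together with the explicit structure $T_{\eps,f}=I_{\eps,f}\circ T$.

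\textbf{Item 1.} We argue by induction on $n$, following the classical one-step Lasota--Yorke argument of \cite{liverani01} (subsection 10.2). The first step is to prove the one-step inequality. For $\L=\L_{T_{\eps,f}}$ we differentiate the formula $\L h=\sum_{y\in T_{\eps,f}^{-1}x} h(y)/|T_{\eps,f}'(y)|$ a total of $i$ times. The only term carrying $h^{(i)}$ is $\L\bigl(h^{(i)}/(T_{\eps,f}')^{i}\bigr)$. Its $L^1$ norm is at most $\lambda^{i}\|h^{(i)}\|_1$. All remaining terms involve derivatives of $1/T_{\eps,f}'$ up to order $i\le 4$, which are bounded uniformly via $\|T_{\eps,f}\|_{C^5}<C_0$, multiplied by $h^{(j)}$ with $j<i$. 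This gives
\[
\|\L h\|_{W^{i,1}}\le \lambda^i\|h\|_{W^{i,1}}+B\|h\|_{W^{i-1,1}},
\]
with $B$ uniform over $\cT$.

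The second step is the sequential composition. The composition $T_{\eps,f_n}\circ\dots\circ T_{\eps,f_1}$ has inverse derivative bounded by $\lambda^n$ by the chain rule. Iterating the one-step bound, the lower-order terms form a sum $\sum_k\lambda^{ik}B\|\cdot\|_{W^{i-1,1}}$. To close this we also need uniform boundedness of the compositions in $W^{i-1,1}$. We get it by a downward induction in $i$, starting from the $L^1$ contraction $\|\L h\|_1\le\|h\|_1$. The resulting constant is $C_{LY}=\sum_k\lambda^{ik}B\cdot(\text{uniform }W^{i-1,1}\text{ bound})$, which is independent of $n$ and of the $f_j$.

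\textbf{Item 2.} We write $\L_{T_{\eps,f}}=\L_{I_{\eps,f}}\L_T$. Since $\L_T$ is bounded on every $W^{k,1}$, it suffices to estimate $(\L_{I_{f_1}}-\L_{I_{f_2}})u$ with $u=\L_Th\in W^{i+1,1}$. We interpolate along $f_t=f_2+t(f_1-f_2)$. Note that $f_t\in\mathcal D_1$ only up to positivity, but $I_{f_t}$ is still a diffeomorphism for small $\eps$, because $|\eps\,\partial_xG(f_t)|\le\eps\|g\|_{C^1}\|f_t\|_1$ remains small. Using the derivative formula of Step 1 in Proposition~\ref{prop:deriv}, namely $\partial_f(\L_{I_f}u)[e]=-\L_{I_f}\bigl[(u\,\eps G e/I_f')'\bigr]$, we obtain
\[
(\L_{I_{f_1}}-\L_{I_{f_2}})u=-\eps\int_0^1\L_{I_{f_t}}\Bigl[\Bigl(\frac{u\,G(f_1-f_2)}{I_{f_t}'}\Bigr)'\Bigr]dt .
\]
The Bochner integral is justified since $u$ is smooth enough. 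We then bound the integrand. For $i=0$ we use $\|\L_{I}\|_{L^1}\le1$, and for $i=1$ we use Lemma~\ref{lem:LIfcon}. Lemma~\ref{lem:product} and Lemma~\ref{lem:boudifprimeinv} control the product and quotient, and \eqref{eq:boundG} gives $\|G(f_1-f_2)\|_{C^{k}}\le\|g\|_{C^k}\|f_1-f_2\|_1$. The result is a bound $C\eps\|u\|_{W^{i+1,1}}\|f_1-f_2\|_{L^1}$, which is even stronger than stated, and $\|u\|_{W^{i+1,1}}\le C\|h\|_{W^{i+1,1}}$.

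\textbf{Main obstacle.} The delicate point is the rigorous justification of the integral (mean-value) representation in Item 2 for merely $W^{i+1,1}$ data, since the derivative formula was derived with extra regularity. The first thing to try is a density argument: prove the identity for smooth $h$, then extend it by continuity of both sides in $W^{i+1,1}$. This continuity is available from the uniform bounds above.
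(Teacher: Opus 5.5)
Your proposal is correct. For item 1 the paper only cites \cite{galatolo22}, and your argument is the standard proof behind that citation: a one-step inequality, then iteration using uniform $W^{i-1,1}$ bounds obtained inductively from the $L^1$ contraction. (That induction runs upward in $i$, not downward.)

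For item 2 you take a genuinely different route. The paper quotes a general perturbation estimate $\|(\L_{T_1}-\L_{T_2})h\|_{W^{i,1}}\le C\|h\|_{W^{i+1,1}}\,d_{C^5}(T_1,T_2)$ for maps in $\cT$ from \cite{bahetal23}. It then bounds $d_{C^5}(T_{\eps,f_1},T_{\eps,f_2})$ by a multiple of $\eps\|f_1-f_2\|_{1}$ directly from \eqref{eq:couplingmap}. That is a black box which ignores the coupling structure and works for any two $C^5$-close expanding maps. You instead exploit $\L_{T_{\eps,f}}=\L_{I_f}\L_T$ and the fact that $f\mapsto I_f$ is affine, and write the difference as the integrated derivative of $f\mapsto\L_{I_f}u$ along $f_t$. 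This stays inside the paper, using only Lemmas \ref{lem:product}, \ref{lem:boudifprimeinv} and \ref{lem:LIfcon}, and it gives explicit constants. It also shows that the operator difference is exactly the integrated derivative that reappears in the Newton analysis. Both routes give the stronger dependence on $\|f_1-f_2\|_{L^1}$.

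Two simplifications are worth making.
\begin{enumerate}
\item Since $\mathcal D_1$ is convex, $f_t\in\mathcal D_1$ exactly, so your caveat about positivity is unnecessary and the lemmas apply verbatim along the path.
\item Do not import the formula from Proposition \ref{prop:deriv}. It is proved there only for $f\in B_K\cap F_N$, $e\in F_N$ and smoother $u$, which your path does not satisfy. Instead, derive the $t$-derivative by duality. With $\psi=G(f_1-f_2)$ and $I_{f_t}=I_{f_2}+t\eps\psi$, for $\varphi\in C^1$,
\[
\frac{d}{dt}\int\varphi\,\L_{I_{f_t}}u=\int\varphi'\circ I_{f_t}\,\eps\psi u=\int(\varphi\circ I_{f_t})'\,\frac{\eps\psi u}{I_{f_t}'}=-\int\varphi\,\L_{I_{f_t}}\Bigl[\Bigl(\frac{\eps\psi u}{I_{f_t}'}\Bigr)'\Bigr].
\]
This gives your integral identity for every $u\in W^{1,1}$ at once. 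The density argument you flag as the main obstacle is therefore not needed.
\end{enumerate}
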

\begin{proof}
    The verification of \eqref{eq:seqLY} is standard, similar calculations can be found in \cite{galatolo22}. While for \eqref{eq:closeop}, one can easily show, see for instance \cite{bahetal23},
    $$\|(\L_{T_{\eps, f_1}}-\L_{T_{\eps,f_2}})h\|_{W^{i,1}} \leq C\|h\|_{W^{i+1,1}}d_{C^5}(T_{\eps,f_1},T_{\eps,f_2}).$$ Then using \eqref{eq:couplingmap} we get, for $j=0,\dots, 5$
$$|T_{\eps,f_1}^{(j)}(x)-T_{\eps,f_2}^{(j)}|\le \eps\|g^{(j)}\|_\infty\cdot\|f_1-f_2\|_1.$$
\end{proof}

\begin{theorem}\label{thm:gen}
 The following hold: 
\begin{enumerate}
    \item For each
    $\eps\in[0,\eps^*_0]$, $\tilde\L_\eps$ has a fixed point in $B_K$.    
\item There exists $\eps_1^* \in(0,\eps_0^*]$, $C>0$, $\theta\in(0,1)$ such that for all $\eps \in[0,\eps^*_1]$, $h\in W^{i,1}$, $i=1,2$, with $\int h=0$ and any sequence  $(f_j)_{j\in \mathbb{N}} \in (B_K)^{\mathbb{N}}$,
\begin{align*}
    \|\L_{T_{\eps,f_n}}\circ\cdots \circ \L_{T_{\eps, f_1}}h\|_{W^{i,1}}\le C \tilde \theta^n\|h\|_{W^{i,1}}.
\end{align*}

\item There exists $\eps_2^* \in(0,\eps_1^*]$,  such that for all $\eps \in[0,\eps^*_2] $ the operator $\tilde\L_\eps$ has a unique fixed point $h_\eps^*\in B_K$. Moreover, there exists $C_K>0$ and $\gamma\in (0,1)$ such that for all $\eps \in[0,\eps^*_2]$,  for all $h^0\in B_K$, we have
\[
\|{\tilde\L}_\eps^{n}(h^0)-h_\eps^*\|_{W^{1,1}}\le C_K \gamma^n;
\]
more explicitly,  
$$\|\mathcal{L}_{T_{\eps, h^{n-1}}}\circ\cdots\circ \mathcal{L}_{T_{\eps, h^{1}}}\circ\mathcal{L}_{T_{\eps, h^{0}}}h^{0} - h^*_\eps\|_{L^1}\le C_K \gamma^n.$$
\end{enumerate}
\end{theorem}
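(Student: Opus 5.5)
The proof treats the three items of Theorem \ref{thm:gen} in order. Each one uses the uniform inequalities of Lemma \ref{introprop}, namely \eqref{eq:seqLY} and \eqref{eq:closeop}.

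\textbf{Item 1.} I would first show that $\tilde\L_\eps(B_K)\subseteq B_K$.
\begin{itemize}
\item Take $h\in B_K$. Then $\L_{T_{\eps,h}}h$ is again a probability density, since transfer operators preserve positivity and mass.
\item Its $W^{2,1}$ norm is controlled by \eqref{eq:standLY} with $i=2$, which gives $\lambda^2 K+C_{LY}\|h\|_{W^{1,1}}$.
\item To close the estimate at $K$, I need a bound on $\|h\|_{W^{1,1}}$ along the iteration that is compatible with $K\ge C_{LY}/(1-\lambda^2)$. I would get it from the $i=1$ inequality together with $\|h\|_{L^1}=1$. If the constants do not close directly, I would enlarge $C_{LY}$ or use the standard interpolation $\|h'\|_1\le \eta\|h''\|_1+C_\eta\|h\|_1$. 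This constant bookkeeping is the only delicate point in item 1.
\end{itemize}
With invariance in hand, $B_K$ is convex, and by Rellich--Kondrachov it is compact in $W^{1,1}$. The map $h\mapsto \L_{T_{\eps,h}}h$ is continuous in $W^{1,1}$ on $B_K$, by \eqref{eq:closeop} with $i=1$ applied to the $h$-dependence of the map, plus linearity in the argument. Schauder's theorem then gives a fixed point.

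\textbf{Item 2 (uniform memory loss).} This is the step I expect to be the main obstacle, because it must be uniform over arbitrary sequences $(f_j)$. I would argue by perturbation from $\eps=0$.
\begin{itemize}
\item At $\eps=0$, $\L_T$ has a spectral gap on $W^{i,1}$. So for zero-mean $h$ there is $n_0$ with $\|\L_T^{n_0}h\|_{W^{i-1,1}}\le \eta\|h\|_{W^{i,1}}$, where $\eta$ is small. This uses compactness of $W^{i,1}\hookrightarrow W^{i-1,1}$ and the decay of $\L_T^n$ on $V_0$.
\item Telescoping the composition against $\L_T^{n_0}$, and using \eqref{eq:closeop} at level $i-1$ (with $\L_T=\L_{T_{0,f}}$) together with \eqref{eq:seqLY} to bound the intermediate factors, gives
\[
\|(\L_{T_{\eps,f_{n_0}}}\circ\cdots\circ\L_{T_{\eps,f_1}}-\L_T^{n_0})h\|_{W^{i-1,1}}\le C n_0\eps\|h\|_{W^{i,1}}.
\]
\item Feeding this into \eqref{eq:seqLY} over a further block of length $m$ yields
\[
\|\text{(composition of length } n_0+m)\,h\|_{W^{i,1}}\le \big(\lambda^{im}C+C_{LY}(\eta+Cn_0\eps)\big)\|h\|_{W^{i,1}}.
\]
\item I would choose $\eta$ small, then $m$ large, then $\eps_1^*$ small, so that the bracket is less than $\alpha<1$. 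Zero mean is preserved, so the estimate can be iterated block by block, which gives $C\theta^n$.
\end{itemize}

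\textbf{Item 3.} For $h,\tilde h\in B_K$, write $h^k=\tilde\L_\eps^k h$ and $\tilde h^k=\tilde\L_\eps^k\tilde h$, and telescope, exactly as in the proof of Lemma \ref{lem:discrete_contraction}:
\[
h^n-\tilde h^n=\L_{\tilde h^{n-1}}\cdots\L_{\tilde h^0}(h-\tilde h)+\sum_{k}\L_{\tilde h^{n-1}}\cdots\L_{\tilde h^{k+1}}(\L_{T_{\eps,h^k}}-\L_{T_{\eps,\tilde h^k}})\L_{T_{\eps,h^{k-1}}}\cdots h .
\]
Every summand has zero mean. Item 2 and \eqref{eq:closeop}, with the $W^{2,1}$ bound $K$ from item 1, then give
\[
\|h^n-\tilde h^n\|_{W^{1,1}}\le C\theta^n\|h-\tilde h\|+CK\eps\sum_k\theta^{n-k}\|h^k-\tilde h^k\|_{W^{1,1}}.
\]
An induction with $\gamma\in(\theta,1)$ yields $\|h^n-\tilde h^n\|_{W^{1,1}}\le C_0\gamma^n\|h-\tilde h\|_{W^{1,1}}$ once $\eps\le\eps_2^*$ is small. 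Taking $\tilde h=h_\eps^*$ from item 1 gives the exponential convergence, with $C_K:=2KC_0$. If there were two fixed points in $B_K$, this contraction would force them to coincide, which gives uniqueness.
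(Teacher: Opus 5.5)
Your proposal is correct and follows the standard argument that the paper delegates to the cited literature. The paper reproduces the same three steps for the discretised operator in Lemmas \ref{lem:discLY_co}, \ref{lem:contracto} and \ref{lem:discrete_contraction}: invariance of $B_K$ plus Schauder, uniform memory loss by telescoping against an operator with a spectral gap followed by a Lasota--Yorke block estimate, and the inductive contraction bound with $\gamma\in(\theta,1)$. One technical correction in item 1: $B_K$ is only relatively compact in $W^{1,1}$, since $W^{1,1}$-limits of its elements may have $h'$ merely of bounded variation and so need not lie in $W^{2,1}$. Apply Schauder on the $W^{1,1}$-closure of $B_K$, to which the map extends continuously. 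Any fixed point found there is the invariant density of a $C^5$ expanding map, hence in $W^{4,1}$, and therefore lies in $B_K$ by lower semicontinuity of $\|h''\|_{L^1}$ under $W^{1,1}$-convergence.
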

The proof of the above theorem is by now standard. See for instance \cite[Sections 3, 4 and 7]{galatolo22} for abstract results and applications to circle maps, \cite[Subsection 4.3]{keller00} for original work in the setting of circle maps, \cite[Theorem 1] {balintetal18} for a refinement of \cite{keller00} and \cite[Proposition 2.8, Theorem 2.9]{bahetal23} for results equivalent to the above theorem in a uniformly hyperbolic setting.  
\begin{remark}
Notice that any fixed point of $\tilde\L_\eps$, $h^*_\eps$, is an element of $W^{4,1}$. This follows from the definition of $\tilde\L_\eps$: 
 $$h^*_\eps=\tilde\L_\eps h_\eps=\L_{T_{\eps,h^*_\eps}}h^*_\eps.$$
 Thus, $h^*_\eps$ is the invariant density of some $T_{\eps,h^*_\eps}\in\mathcal T$. 
 One also has, due to $g\in C^5$, that $T_{\eps,f}\in C^5$ for any $f\in \mathcal{D}_1\subset L^1(\mathbb{S}^1)$. Therefore, $h^*_\eps\in W^{4,1}$.
\end{remark}

\section{An auxiliary lemma}

\begin{lemma}\label{lem:product}
    If $k\geq 1$ and $f,g\in W^{k,1}(\mathbb{S}^1,\mathbb{R})$, then 

\begin{align}\label{eq:sobolev}
    \|f\|_{\infty}\leq \|f\|_{W^{1,1}},
\end{align}
    and
\begin{align*}
    \|fg\|_{W^{k,1}}\leq 2^k\|f\|_{W^{k,1}}\|g\|_{W^{k,1}}.
\end{align*}
\end{lemma}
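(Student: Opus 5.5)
The plan is to prove the two inequalities in order and to use the first to bootstrap the second by induction on $k$. Throughout, $\mathbb S^1$ is identified with $[0,1)$, which has length $1$, and elements of $W^{1,1}$ are taken in their absolutely continuous representative.

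\emph{The sup bound \eqref{eq:sobolev}.} Since $\int_{\mathbb S^1}|f|=\|f\|_1$ and the circle has measure $1$, there is a point $x_0$ with $|f(x_0)|\le\|f\|_1$. For any $x$, the fundamental theorem of calculus gives
$$|f(x)|\le |f(x_0)|+\int_{\mathbb S^1}|f'|\le \|f\|_1+\|f'\|_1=\|f\|_{W^{1,1}}.$$
This is the whole argument.

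\emph{The product bound, base case $k=1$.} Write $\|fg\|_{W^{1,1}}=\|fg\|_1+\|f'g+fg'\|_1$. Put $f$ in $L^\infty$ in the terms $fg$ and $fg'$, and put $g$ in $L^\infty$ in the term $f'g$. Using \eqref{eq:sobolev}, this gives
$$\|f\|_\infty\big(\|g\|_1+\|g'\|_1\big)+\|g\|_\infty\|f'\|_1\le \|f\|_{W^{1,1}}\|g\|_{W^{1,1}}+\|g\|_{W^{1,1}}\|f\|_{W^{1,1}}=2\|f\|_{W^{1,1}}\|g\|_{W^{1,1}}.$$

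\emph{Inductive step, $k\ge2$.} Assume the bound holds at level $k-1$. Set $A=\|f\|_{W^{k,1}}$, $B=\|g\|_{W^{k,1}}$, $a_0=\|f\|_1$ and $b_0=\|g\|_1$. Since $\|h\|_{W^{k,1}}=\|h\|_1+\|h'\|_{W^{k-1,1}}$, we get
$$\|fg\|_{W^{k,1}}\le \|fg\|_1+\|f'g\|_{W^{k-1,1}}+\|fg'\|_{W^{k-1,1}}.$$
The induction hypothesis applies because $k-1\ge1$. Together with $\|f'\|_{W^{k-1,1}}=A-a_0$, $\|g'\|_{W^{k-1,1}}=B-b_0$, $\|f\|_{W^{k-1,1}}\le A$ and $\|g\|_{W^{k-1,1}}\le B$, the last two terms are bounded by
$$2^{k-1}\big[(A-a_0)B+A(B-b_0)\big]=2^kAB-2^{k-1}(a_0B+Ab_0).$$

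The main obstacle is the leftover zeroth-order term $\|fg\|_1$. A naive induction would add it on top of $2^kAB$ and spoil the constant. The subtracted quantity above is what absorbs it. By \eqref{eq:sobolev}, $\|fg\|_1\le\|f\|_\infty b_0\le Ab_0$, and $Ab_0\le 2^{k-1}(a_0B+Ab_0)$ because $2^{k-1}\ge1$. Hence $\|fg\|_{W^{k,1}}\le 2^kAB$, which closes the induction and proves the lemma.
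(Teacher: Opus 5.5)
Your proof is correct and matches the paper's argument step for step: the same $k=1$ estimate, the same induction via $\|h\|_{W^{k,1}}=\|h\|_1+\|h'\|_{W^{k-1,1}}$, and the same absorption of the leftover term $\|fg\|_1\le\|f\|_{W^{k,1}}\|g\|_1$ into the slack $\|g'\|_{W^{k-1,1}}=\|g\|_{W^{k,1}}-\|g\|_1$ using $2^{k-1}\ge1$. The only difference is that you prove the sup bound directly with the mean-value and fundamental-theorem argument, which needs the circle to have length at least $1$, whereas the paper just cites the Sobolev embedding.
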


\begin{proof}
the proof of \eqref{eq:sobolev} is a consequence of Sobolev inequality for Sobolev maps on $\mathbb S^1$ : if $f,g\in W^{k,1}(\mathbb{S}^1,\mathbb{R})$, then $f,g\in C^0(\mathbb S^1)$ and $\|f\|_{\infty}\leq \|f\|_{W^{1,1}}$.
The proof of the other equation is done by induction. To initialize, let $f,g\in W^{1,1}$, notice that by \eqref{eq:sobolev}, 
\begin{align*}
    \|(fg)'\|_{L^1}&\leq \|f'g\|_{L^1}+\|fg'\|_{L^1}\\
&\leq \|f'\|_{L^1}\|g\|_{W^{1,1}}+\|f\|_{W^{1,1}}\|g'\|_{L^1}\\
\end{align*}
and since
\begin{align*}
    \|fg\|_{L^1}\leq \|f\|_{W^{1,1}}\|g\|_{L^1},
\end{align*}
we obtain by summing the two inequalities,
\begin{align*}
    \|fg\|_{W^{1,1}} &\leq 2\|f\|_{W^{1,1}}\|g\|_{W^{1,1}}.
\end{align*}
For the induction, assume the Lemma holds up to $k\in \mathbb N^*$ and any $f,g\in W^{k,1}$. Let $f,g\in W^{k+1,1}$,
\begin{align*}
    \|fg\|_{W^{k+1,1}}&\leq \|(fg)'\|_{W^{k,1}}+\|fg\|_{L ^1}\\
    &\leq \|f'g\|_{W^{k,1}}+\|fg'\|_{W^{k,1}}+\|fg\|_{L^1}\\
      &\leq 2^k\|f'\|_{W^{k,1}}\|g\|_{W^{k,1}}+2^k\|f\|_{W^{k,1}}\|g'\|_{W^{k,1}}+\|fg\|_{L^1}\\
      &\leq 2^k\|f'\|_{W^{k,1}}\|g\|_{W^{k,1}}+2^k\|f\|_{W^{k,1}}\|g'\|_{W^{k,1}}+\|f\|_{W^{k,1}}\|g\|_{L^1}\\
      &\leq 2^k\|f'\|_{W^{k,1}}\|g\|_{W^{k,1}}+2^k\|f\|_{W^{k,1}}\left(\|g'\|_{W^{k,1}}+\|g\|_{L^1}\right)\\
      &\leq 2^{k+1}\|f\|_{W^{k+1,1}}\|g\|_{W^{k+1,1}}.
\end{align*}

\end{proof}

\section{Acknowledgements}
This research was initiated during GF's visit to Loughborough University, which was supported by WB's EPSRC grant EP/V053493/1. An ARC Laureate Fellowship FL230100088 partially supported the  research of GF and fully supported the research of MP.
WB would like to acknowledge the Sydney Mathematical Research Institute (SMRI) for funding his visit to the University of Sydney and UNSW Sydney, and the Laureate Fellowship for partially funding his travel to MATRIX, visits during which further substantial progress was made. 
WB, GF and MP would like to thank the MATRIX Institute for Mathematical Research for hospitality.

\bibliographystyle{plain}
\bibliography{refs}
\end{document}